\documentclass[11pt,letterpaper,twoside,reqno,nosumlimits]{amsart}
\usepackage{amsmath,amsfonts,amsbsy,amsgen,amscd,mathrsfs,amssymb,amsthm,mathtools,tensor,bbm,stmaryrd}
\usepackage{siunitx}
\usepackage{algorithm,algorithmic}
\usepackage{enumitem}
\setlist[enumerate,1]{label={\roman*)}}

\usepackage{authblk}
\usepackage[foot]{amsaddr}

\usepackage[toc, page]{appendix}

\usepackage[margin=1in]{geometry}
\usepackage[dvipsnames]{xcolor}
\definecolor{MyColor}{HTML}{0047AB}
\usepackage{fancyhdr}
\usepackage[colorlinks=true, bookmarks=false, citecolor=blue, urlcolor=blue]{hyperref}

\usepackage{etoolbox}
\makeatletter
\renewcommand{\@secnumfont}{\bfseries}
\makeatother
\patchcmd{\section}{\scshape}{\bfseries}{}{}
\patchcmd{\section}{\normalfont}{\normalfont\color{MyColor}}{}{}
\patchcmd{\subsection}{\normalfont}{\normalfont\color{MyColor}}{}{}
\makeatletter
\def\subsubsection{\@startsection{subsubsection}{3}%
\z@{.5\linespacing\@plus.7\linespacing}{-.5em}%
{\normalfont\bfseries}}
\makeatother

\usepackage{graphicx}
\usepackage{float}
\usepackage{caption}
\usepackage{subcaption}
\usepackage{tikz}

\usepackage{booktabs} 
\usepackage{multirow}
\usepackage{tabu} 

\usepackage{soul}
\usepackage[color=yellow]{todonotes}

\newtheorem{theorem}{Theorem}[section]
\newtheorem{lemma}[theorem]{Lemma}
\newtheorem{definition}[theorem]{Definition}
\newtheorem{proposition}[theorem]{Proposition}
\newtheorem{corollary}[theorem]{Corollary}

\newtheorem{remark}[theorem]{Remark}

\makeatletter
\@tfor\next:=abcdefghijklmnopqrstuvwxyzABCDEFGHIJKLMNOPQRSTUVWXYZ\do{
\def\command@factory#1{%
\expandafter\def\csname b#1\endcsname{\mathbf{#1}}
\expandafter\def\csname fk#1\endcsname{\mathfrak{#1}}
\expandafter\def\csname bb#1\endcsname{\mathbb{#1}}
\expandafter\def\csname cl#1\endcsname{\mathcal{#1}}
\expandafter\def\csname scr#1\endcsname{\mathscr{#1}}
\expandafter\def\csname bcl#1\endcsname{\mathbfcal{#1}}
}
\expandafter\command@factory\next
}

\newcommand{\dd}{\mathrm{d}}

\newcommand{\loc}{\textnormal{loc}}

\newcommand{\cE}{\mathcal{E}}
\newcommand{\cF}{\mathcal{F}}
\newcommand{\cG}{\mathcal{G}}

\newcommand{\cM}{\mathcal{M}}
\newcommand{\cP}{\mathcal{P}}
\newcommand{\cV}{\mathcal{V}}
\newcommand{\cW}{\mathcal{W}}

\newcommand{\N}{\mathbb{N}}
\newcommand{\Q}{\mathbb{Q}}
\newcommand{\R}{\mathbb{R}}
\renewcommand{\S}{\mathbb{S}}
\newcommand{\T}{\mathbb{T}}

\newcommand{\Z}{\mathbb{Z}}

\renewcommand{\leq}{\leqslant}
\renewcommand{\geq}{\geqslant}

\DeclareMathOperator*{\esssup}{ess\,sup}

\begin{document}

\title[2D Euler is well-posed for Baire-generic initial data in $L^2$]{The 2D Euler equations are well-posed\\ for generic initial data in $L^2$}

\author{Lucio Galeati$^1$}
\address{$^1$Dipartimento di Ingegneria e Scienze dell'Informazione e Matematica, Università degli Studi dell'Aquila, Italy}
\email{$^1$lucio.galeati@univaq.it}

\maketitle
\begin{abstract}
In this note we show the existence of a residual set (in the sense of Baire) of divergence free initial data $u_0\in L^2(D)$, $D=\R^2$ or $\T^2$, for which global existence and uniqueness of weak solutions to the incompressible $2$D Euler equations holds. The associated solutions $u$ satisfy the energy balance and are recovered in the vanishing viscosity limit from solutions to 2D Navier-Stokes, which as a consequence cannot display anomalous dissipation of energy. 
Additionally, there exists a unique regular Lagrangian flow associated to such $u$, and the associated transport equation is well-posed.
Finally, when $D=\T^2$, the solution $u$ is recovered as the limit of Galerkin approximations.
The proof relies on global existence of smooth solutions and weak-strong uniqueness arguments.\\

\noindent \textbf{Keywords:} 2D Euler equations, well-posedness for Baire-generic initial conditions, weak-strong uniqueness.\\

\noindent \textbf{MSC Classification (2020):} 35Q31, 35A02, 35D30.

\end{abstract}

{\hypersetup{linkcolor=MyColor}
\setcounter{tocdepth}{3}
\tableofcontents}

\section{Introduction}

In this paper we consider the $2$D Euler equations in velocity form:
\begin{equation}\label{eq:intro_euler}
    \begin{cases}
        \partial_t u+ (u\cdot\nabla)u +\nabla p=0,\\
        \nabla\cdot u=0,\quad u\vert_{t=0}=u_0.
    \end{cases}
\end{equation}
We consider the domain $D$ being either the full space $\R^2$, or the torus $\T^2$; we set
\begin{equation}\label{eq:defn_L^2_sigma}\begin{split}
    L^2_\sigma(\R^2)& :=\{u\in L^2(\R^2;\R^2): \nabla\cdot u=0\text{ in the sense of distributions}\},\\
    L^2_\sigma(\T^2)& :=\left\{u\in L^2(\R^2;\R^2): \nabla\cdot u=0\text{ in the sense of distributions, }\int_{\T^2} u(x) \dd x=0\right\}.
\end{split}\end{equation}
On $\T^2$, condition $\int_{\T^2} u\, \dd x =0$ is rather convenient, and the case of general divergence free $u\in L^2$ can be easily reduced to this one.

Classical results originally due to Wolibner (see e.g.~\cite{MajBer2002}) guarantee that system \eqref{eq:intro_euler} is globally well-posed whenever $u_0$ is smooth, in which case the solution stays smooth at all times; more generally, Yudovich theory guarantees existence and uniqueness (in appropriate function spaces) whenever for instance $u_0\in H^1\cap L^2_\sigma$ is such that its vorticity $\omega_0=\nabla^\perp\cdot u_0$ belongs to $L^\infty$ (see~\cite{CriSte2024} for recent refinements in localized Yudovich spaces).
Below this regularity condition, for instance in the case of initial data $u_0\in L^2_\sigma\cap \dot W^{1,p}$ with $p<+\infty$, solutions have been constructed by several approximation schemes (starting with~\cite{DiPMaj1987}) and shown to satisfy several interesting properties, like being Lagrangian and/or energy/enstrophy preserving, see e.g. \cite{LFMNL2006,CFLNS2016,CriSpi2015,CCS2020,DeRPar2025}; however, their uniqueness is an open problem.
If one considers even larger classes of weak solutions, then counterexamples to uniqueness can be constructed by convex integration techniques, starting with~\cite{DeLSze2009}; among the many important works in this direction, let us briefly mention \cite[Thms. 1.3-1.4]{szekelyhidi2015weak}, implying the existence of infinitely many admissible weak solutions to \eqref{eq:intro_euler}, for a dense subset of $u_0\in L^2_\sigma$. If one removes the admissibility constraint, then there exist infinitely many weak solutions starting from {\em any} $u_0\in L^2_\sigma$, as shown in \cite{Wiedemann2011}.
More references are discussed in Section~\ref{subsec:literature} below.

The main result of this paper states that, despite the aforementioned references, there is a large set of initial conditions $u_0\in L^2_\sigma$ for which the 2D Euler equations \eqref{eq:intro_euler} are indeed well-posed, in a natural sense.
In particular, this property holds true for a {\em residual} set in the sense of Baire, namely a countable intersection of open dense sets. Throughout the paper, we use the subscript $u_t$ to denote evaluation at time $t$, namely $u_t(x)=u(t,x)$.

\begin{theorem}\label{thm:intro_main}
    Let $D=\R^2$ or $\T^2$. Then there exists a residual set $\mathcal{G}\subset L^2_\sigma(D)$ with the following properties:
    \begin{itemize}
        \item[a)] For any $u_0\in \cG$, there exists a global weak solution $u\in C([0,+\infty);L^2_\sigma(D))$ to \eqref{eq:intro_euler}, which conserves energy: $\| u_t\|_{L^2}=\|u_0\|_{L^2}$ for all $t\geq 0$.
        \item[b)] For any $\tau\in (0,+\infty)$, uniqueness holds in the class of admissible measure-valued solutions to \eqref{eq:intro_euler} on $[0,\tau]$ with initial condition $u_0\in\cG$.
    \end{itemize}
    Furthermore the unique solution $u$ obtained in this way can be recovered by vanishing viscosity approximations and no anomalous dissipation takes place: for any $u_0\in \cG$, any sequence $(u^n_0)_n\subset L^2_\sigma(D)$ such that $u^n_0\to u_0$ in $L^2_\sigma(D)$ and any sequence $\nu_n \to 0^+$, the unique solutions $u^n$ to
    \begin{equation}\label{eq:intro_NS}
        \begin{cases}
        \partial_t u^n+ (u^n\cdot\nabla)u^n +\nabla p^n=\nu_n \Delta u^n,\\
        \nabla\cdot u^n=0,\quad u^n\vert_{t=0}=u^n_0
    \end{cases}
    \end{equation}
    are such that
    \begin{equation}\label{eq:intro_vanishing_viscosity}
        \lim_{n\to\infty} \sup_{t\in [0,\tau]} \| u^n_t -u_t\|_{L^2}=0\quad \text{and} \quad \lim_{n\to\infty} \int_0^\tau \nu_n \| \nabla u^n_t\|_{L^2}^2 \dd t =0 \quad \forall\, \tau\in (0,+\infty).
    \end{equation}
\end{theorem}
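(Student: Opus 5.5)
The plan is to build $\cG$ as a countable intersection of open dense sets. Each set is a neighbourhood of smooth data, and its radius is chosen through a quantitative weak-strong stability estimate. Fix a countable set $\{v^k_0\}_k$ of smooth, compactly supported (or trigonometric polynomial, on $\T^2$) divergence-free fields that is dense in $L^2_\sigma(D)$. By Wolibner's theorem each $v^k_0$ generates a global smooth Euler solution $v^k$, and $\int_0^\tau\|\nabla v^k_t\|_{L^\infty}\dd t<\infty$ for every $\tau$. The core tool is the relative energy (weak-strong) inequality. Let $w$ be any admissible measure-valued solution, or a Navier--Stokes solution with viscosity $\nu$, starting from $w_0$. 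Then
\begin{equation*}
\|w_t-v^k_t\|_{L^2}^2\leq \Big(\|w_0-v^k_0\|_{L^2}^2+ C\nu\int_0^t\|\nabla v^k_s\|_{L^2}^2\dd s\Big)\exp\Big(2\int_0^t\|\nabla v^k_s\|_{L^\infty}\dd s\Big).
\end{equation*}
In the measure-valued case there is an extra defect term, which is controlled in the same way. This estimate is standard: test the equation for $w$ against $v^k$ and use the energy inequality for $w$. Set $A_{k,N}:=\exp(\int_0^N\|\nabla v^k_s\|_{L^\infty}\dd s)$.

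\textbf{Step 1 (the residual set).} For $N,m\in\N$ define
\[U_{N,m}:=\bigcup_k B_{L^2}\big(v^k_0,\ r_{k,N,m}\big),\qquad r_{k,N,m}:=\tfrac{1}{m}A_{k,N}^{-1},\]
and put $\cG:=\bigcap_{N,m}U_{N,m}$. Each $U_{N,m}$ is open. It is also dense: for fixed $N,m$, every ball in $L^2_\sigma$ contains infinitely many $v^k_0$, and each of them carries a ball of positive radius inside $U_{N,m}$. Hence $\cG$ is residual.

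\textbf{Step 2 (uniqueness and existence).} Fix $u_0\in\cG$ and $\tau\leq N$. For every $m$ there is some $k=k(m)$ with $\|u_0-v^{k}_0\|_{L^2}A_{k,N}<1/m$. Applying the estimate above to two admissible measure-valued solutions $w,w'$ from $u_0$ (with $\nu=0$) gives
\[\sup_{t\le\tau}W(w_t,w'_t)\lesssim \frac{1}{m},\]
where $W$ is a suitable distance built from the $L^2$-deviations from $v^k$; I would use the triangle inequality for the barycenter together with the vanishing of the defect. Letting $m\to\infty$ shows $w=w'$, and also that the defect and the Young measure are trivial. This gives b). The same estimate shows $\sup_{t\le\tau}\|v^{k(m)}_t-v^{k(m')}_t\|_{L^2}\to0$, so $v^{k(m)}$ is Cauchy in $C([0,\tau];L^2)$. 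Its limit $u$ is a weak solution, since quadratic terms pass to the limit under strong $L^2$ convergence. It conserves energy because each $v^k$ does and the $L^2$ norms converge. Continuity in time follows from uniform convergence. This gives a). I also have to check that $u$ is admissible, so that it coincides with the unique solution in b).

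\textbf{Step 3 (vanishing viscosity).} Apply the estimate with $w=u^n$ and $v=v^{k(m)}$ on $[0,\tau]$:
\[\sup_{t\le\tau}\|u^n_t-v^{k}_t\|_{L^2}^2\le \big(2\|u^n_0-u_0\|^2+2m^{-2}A_{k,N}^{-2}+C\nu_n \tau\|\nabla v^k\|_{L^\infty_tL^2}^2\big)A_{k,N}^2.\]
Let $n\to\infty$ first, then $m\to\infty$, and combine with Step 2; this gives uniform $L^2$ convergence. No anomalous dissipation then follows from the NS energy equality: $\nu_n\int_0^\tau\|\nabla u^n\|^2=\tfrac12(\|u^n_0\|^2-\|u^n_\tau\|^2)\to\tfrac12(\|u_0\|^2-\|u_\tau\|^2)=0$ by a).

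\textbf{Main obstacle.} The delicate step is making the relative energy inequality rigorous in the weakest classes: admissible measure-valued solutions on $\R^2$, with concentration defects and no decay at infinity. One must also check that the pressure terms and the commutator $\int (w-v)\cdot\nabla v\cdot(w-v)$ are controlled by $\|\nabla v\|_{L^\infty}$ alone. For this I would follow the Brenier--De Lellis--Székelyhidi weak-strong uniqueness argument, using that $v^k$ is smooth with $\nabla v^k\in L^1_tL^\infty\cap L^\infty_tL^2$.
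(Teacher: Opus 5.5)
Your proposal is correct. The overall architecture matches the paper's: unions of balls around smooth data with radii $\frac1m\exp(-\int_0^N\|\nabla v^k_s\|_{L^\infty}\dd s)$, intersected over $m,N$, combined with weak-strong stability. Steps 2--3, however, follow a different route.

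For uniqueness, the paper uses the weak-strong estimate only at the level of barycenters. Proposition~\ref{prop:weak_strong_uniqueness}, i.e.\ \cite[Prop.~2]{BDLS2011} used as a black box, makes the barycenter a dissipative solution, and Step 2 identifies it with $u$. Only afterwards are the variance and the concentration measure removed, via Jensen, the admissibility inequality \eqref{eq:admissibility_condition} and energy conservation of $u$. You instead keep the full relative energy $\int_D\langle\nu_{t,x},|z-v^k_t(x)|^2\rangle\,\dd x+\lambda_t(D)$ on the left-hand side, which is what the BDLS computation actually controls. Triviality of the Young measure then comes out at the same time as uniqueness.

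The more substantial difference is vanishing viscosity. The paper proves compactness in $C_w([0,T];L^2_\sigma)$ and in ${\rm Y}^2$, and identifies limit points as admissible measure-valued solutions via \cite[Prop.~1]{BDLS2011}. It then uses uniqueness to get convergence of the whole sequence, and upgrades to uniform strong convergence with Lemma~\ref{lem:strong_convergence}. You use a direct relative energy estimate between $u^n$ and $v^k$, with viscous error $\nu_n\int_0^\tau\|\nabla v^k_s\|_{L^2}^2\dd s$; this is finite since $\|\nabla v^k_s\|_{L^2}=\|\nabla^\perp\cdot v^k_0\|_{L^2}$. Letting $n\to\infty$ at fixed $k=k(m)$ and then $m\to\infty$ bypasses all of that machinery and gives a quantitative rate. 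What the paper's route buys is modularity: once uniqueness among admissible measure-valued solutions holds, any scheme whose limit points are such solutions converges without a scheme-specific error estimate. This is how Corollary~\ref{cor:intro_galerkin} and Lemma~\ref{lem:strong_stability} are obtained.

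Two small remarks. First, defining the radius through the actual $\int_0^N\|\nabla v^k_s\|_{L^\infty}\dd s$ makes Proposition~\ref{prop:global_bounds} unnecessary; the paper's $r_n$ is only an explicit upper bound for it. Second, in the Cauchy argument for $(v^{k(m)})_m$, take as reference solution whichever of $v^{k(m)},v^{k(m')}$ has the smaller exponential factor, as the paper does with its $\min$. Otherwise the term $A_{k(m),N}\|u_0-v^{k(m')}_0\|_{L^2}$ need not be small.
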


Notice that, as an immediate consequence of Theorem~\ref{thm:intro_main}, for any $u_0\in \cG$, the sequence $\{u^n\}_n$ obtained by solving~\eqref{eq:intro_NS} with $u^n_0=u_0$ is compact in $L^2([0,\tau];L^2_\sigma(D))$, for any $\tau \in (0,+\infty)$.
To the best of our knowledge, for any given $u_0\in L^2_\sigma(D)$, it is currently not even known whether this weaker conclusion still holds.

The result and its proof share several ideas with the existing results in the literature concerning the well-posedness of ODEs for generic velocity fields of low regularity, such as~\cite{orlicz1932theorie,Lions1998,AliBah2001}, and the associated linear transport and continuity PDEs~\cite{Bernard2010,cianfrocca2025some}; however, we are not aware of similar arguments having been employed in the context of nonlinear PDEs from fluid dynamics. We postpone a more detailed discussion and comparison to the existing literature to Section~\ref{subsec:literature}.

The proof of Theorem~\ref{thm:intro_main} relies on two main ingredients: i) the global existence of regular solutions for a dense subset of initial conditions; ii) quantitative stability estimates associated to {\em weak-strong uniqueness} arguments.
For the 2D Euler equations, these results are recalled in Section~\ref{sec:recap_solutions}; in particular, we adopt the weak-strong uniqueness criterion from~\cite{BDLS2011}, in the framework of admissible measure-valued solutions to 2D Euler (see Sections~\ref{sec:generalized_Young} and~\ref{subsec:weak_solutions}).
However, in the simpler setting of classical (e.g. $L^2$-valued) weak solutions, the argument is quite general, and could be in principle applied to other PDEs, as long as conditions i) and ii) are met.

The unique solutions $u$ constructed in Theorem~\ref{thm:intro_main}  enjoy several additional properties of interest. In particular, for such velocity fields, one can additionally establish well-posedness for the dynamics of the passive scalar PDE
\begin{equation}\label{eq:intro_passive_scalars}
    \partial_t \rho + u\cdot\nabla \rho=0, \quad\rho|_{t=0}=\rho_0,
\end{equation}
as well as lack of passive scalar anomalous dissipation.

\begin{theorem}\label{thm:intro_passive_scalars}
    Let $D=\R^2$ or $\T^2$. Then the residual set $\mathcal{G}\subset L^2_\sigma(D)$ from Theorem~\ref{thm:intro_main} can be defined so that, denoting by $u$ the unique solution associated to $u_0\in \cG$, the following properties hold:
    \begin{itemize}
        \item[i)] There exists a unique {\em regular Lagrangian flow} $X$ associated to $u$.
        \item[ii)] For any $\tau\in (0,+\infty)$ and any $\rho_0\in L^2$, existence and uniqueness holds in the class of weak solutions $\rho\in L^\infty([0,\tau];L^2)$ to \eqref{eq:intro_passive_scalars} satisfying the energy inequality
        \begin{equation*}
            \| \rho_t\|_{L^2}\leq \| \rho_0\|_{L^2}\quad\text{for Lebesgue a.e. }t\in [0,\tau].
        \end{equation*}
        Moreover the unique global solution $\rho$ is {\em renormalized}, belongs to $C([0,+\infty);L^2)$ and is given by $\rho_t(x)=\rho_0(X^{-1}_t(x))$, where $X$ is the regular Lagrangian flow from Point i).
        \item[iii)] For any $\rho_0\in L^2$, the unique solution $\rho$ to \eqref{eq:intro_passive_scalars} from Point ii) is recovered by vanishing viscosity approximations and no passive scalar anomalous dissipation takes place: for any sequence $\kappa_n \to 0^+$, the unique solutions $\rho^n$ to
        \begin{equation}\label{eq:intro_viscous_passive_scalar}
            \partial_t \rho^n+ (u\cdot\nabla)\rho^n =\kappa_n \Delta \rho^n,\quad
            \rho^n\vert_{t=0}=\rho_0
        \end{equation}
        are such that
        \begin{equation}\label{eq:intro_vanishing_viscosity_passive_scalar}
            \lim_{n\to\infty} \sup_{t\in [0,\tau]} \| \rho^n_t -\rho_t\|_{L^2}=0\quad \text{and} \quad \lim_{n\to\infty} \int_0^\tau \kappa_n \| \nabla \rho^n_t\|_{L^2}^2 \dd t =0 \quad \forall\, \tau\in (0,+\infty).
        \end{equation}
    \end{itemize}
\end{theorem}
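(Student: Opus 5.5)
We follow the same Baire-category strategy as for Theorem~\ref{thm:intro_main}, and shrink the residual set $\cG$ by intersecting it with countably many further open dense sets. Denote by $S_t u_0$ the smooth global solution of \eqref{eq:intro_euler} for $u_0$ in the dense class $\mathcal{D}$ of smooth divergence-free data (e.g.\ $C^\infty_c$ on $\R^2$, trigonometric polynomials on $\T^2$). For such data the velocity $u=Su_0$ is Lipschitz in space on every $[0,\tau]$. Hence the flow is classical, and the transport equation and its viscous approximation have quantitative stability.

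The key quantitative tool is a stability estimate for the transport/advection–diffusion problem that depends only on the $L^1_tL^2_x$ (or $L^1_tL^1_{loc}$) distance between velocity fields, provided one of them is Lipschitz.

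\textbf{Lagrangian flow.} Following the generic-ODE literature (\cite{AliBah2001,Bernard2010}), let $v$ be smooth and $u$ any divergence-free field with $\sup_t\|u_t-v_t\|_{L^2}\le\delta$. Let $X$ be any regular Lagrangian flow of $u$ and $Y$ the classical flow of $v$. Then the functional
\[
\int_B \log\bigl(1+|X_t-Y_t|/\varepsilon\bigr)\,\dd x
\]
can be estimated, as in Crippa--De Lellis but using $\|\nabla v\|_{L^\infty}$ instead of a maximal-function bound. This gives
\[
\mathcal{L}^2\bigl(\{x\in B:\ \sup_{t\le\tau}|X_t-Y_t|>\eta\}\bigr)\le C(\tau,\|\nabla v\|_{L^1_tL^\infty},B)\,\delta/\eta .
\]

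\textbf{Open dense sets.} For $k,m\in\N$ and $\tilde u_0\in\mathcal{D}$, choose radii $r(\tilde u_0,k,m)>0$ so small that this bound is $<1/m$ at scale $\eta=1/m$ on $B_k$ with $\tau=k$. Here we use the continuity estimate $\sup_{t\le k}\|u_t-S_t\tilde u_0\|\lesssim\|u_0-\tilde u_0\|$, valid for $u_0\in\cG$, from the weak-strong argument in the proof of Theorem~\ref{thm:intro_main}. Define $\mathcal{U}_{k,m}=\bigcup_{\tilde u_0\in\mathcal{D}}B(\tilde u_0,r)$ and $\cG'=\cG\cap\bigcap_{k,m}\mathcal{U}_{k,m}$. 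For $u_0\in\cG'$, any two regular Lagrangian flows $X,X'$ are both $2/m$-close in measure on $B_k$ to some classical flow $Y^{(m)}$, so $X=X'$. This gives uniqueness in~i). Existence follows because the $Y^{(m)}$ form a Cauchy sequence in measure; the limit is measure-preserving, since each $Y^{(m)}$ is, and it solves the ODE with $u$ by $L^1_{loc}$ convergence of $v^{(m)}\to u$.

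\textbf{Point ii).} The same scheme uses the energy estimate for the difference $\rho-\tilde\rho$, where $\tilde\rho$ solves the transport equation with $v$ and datum $\tilde\rho_0$ smooth. Using the energy inequality for $\rho$ and the exact conservation for $\tilde\rho$, the weak–strong computation gives
\[
\|\rho_t-\tilde\rho_t\|_{L^2}^2\le\|\rho_0-\tilde\rho_0\|_{L^2}^2+2\int_0^t\!\int(u-v)\cdot\nabla\tilde\rho\,(\rho-\tilde\rho)\,\dd x\,\dd s .
\]
This requires $\nabla\tilde\rho\in L^\infty$, which holds since $v$ is smooth, and the error term is controlled by $\|u-v\|_{L^2}\|\nabla\tilde\rho\|_{L^\infty}\|\rho_0\|_{L^2}$. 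Choosing radii depending also on a countable dense family of smooth $\tilde\rho_0$ yields uniqueness, by approximating an arbitrary $\rho_0$ and using linearity. The same limit shows that $\rho\in C([0,\infty);L^2)$, that it is renormalized, and that it equals $\rho_0\circ X_t^{-1}$, because $\tilde\rho=\tilde\rho_0\circ Y_t^{-1}$ converges accordingly.

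\textbf{Point iii).} For the viscous problem \eqref{eq:intro_viscous_passive_scalar}, compare $\rho^n$ with $\tilde\rho$ in the same way. The extra term $\kappa_n\int\nabla\tilde\rho\cdot\nabla(\rho^n-\tilde\rho)$ is bounded by $C\kappa_n\|\nabla\tilde\rho\|_{L^2}^2$ plus half the dissipation. Hence
\[
\limsup_n\sup_{t\le\tau}\|\rho^n_t-\rho_t\|_{L^2}\lesssim\delta .
\]
Letting $\delta\to0$ gives the first limit. The vanishing of dissipation then follows from the energy identity of $\rho^n$ together with the exact conservation $\|\rho_t\|=\|\rho_0\|$.

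\textbf{Main obstacle.} The delicate point is making the estimates uniform: the radius $r$ must be chosen depending only on the smooth approximant $\tilde u_0$ (through $\|\nabla S\tilde u_0\|_{L^1_tL^\infty}$), and not on the unknown $u_0$. This needs the stability $\sup_{t\le\tau}\|u_t-S_t\tilde u_0\|\le C(\tilde u_0,\tau)\|u_0-\tilde u_0\|$ from the proof of Theorem~\ref{thm:intro_main}, inserted before the intersection is taken. On $\R^2$ there is the additional issue of localization, which I would handle by working on balls $B_k$ and using that $L^2$ velocities give local integrability bounds on flow trajectories.
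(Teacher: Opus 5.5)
Your proposal is correct, and its skeleton matches the paper's. The radii around smooth data shrink like $e^{-2\int_0^T\|\nabla v_t\|_{L^\infty}\dd t}$, which is exactly the modification \eqref{eq:defn_residual_v2}. Flows are compared by Lions' Gr\"onwall argument against the classical flow of the smooth field. For the transport equation you use the weak--strong energy computation against a smooth $\tilde\rho$ with $\|\nabla\tilde\rho_t\|_{L^\infty}\le\|\nabla\tilde\rho_0\|_{L^\infty}e^{\int_0^t\|\nabla v_r\|_{L^\infty}\dd r}$.

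The genuine difference is in Point iii). The paper argues softly. It extracts limit points of $(\rho^n)_n$ in $C_w([0,\tau];L^2)$, identifies them with $\rho$ through the uniqueness of Point ii) (limit points inherit the energy inequality by lower semicontinuity), and upgrades to uniform strong convergence via Lemma~\ref{lem:strong_convergence} using $\|\rho_t\|_{L^2}=\|\rho_0\|_{L^2}$. You instead compare $\rho^n$ directly with $\tilde\rho$ and absorb the cross term $2\kappa_n\langle\nabla\rho^n,\nabla\tilde\rho\rangle$ into the dissipation. Your route is quantitative, avoids compactness and Lemma~\ref{lem:strong_convergence}, and controls $\kappa_n\int_0^\tau\|\nabla\rho^n_t\|_{L^2}^2\dd t$ at the same time. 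It needs only the energy inequality \eqref{eq:energy_inequality_viscous_scalar} for $\rho^n$, not the identity, and that $\tilde\rho$ is an admissible test function with $\nabla\tilde\rho\in L^2([0,\tau];L^2)$, which holds for $\tilde\rho_0\in C^\infty_c$. The paper's route, in exchange, reuses Step~4 of the proof of Theorem~\ref{thm:intro_main} verbatim.

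Some bookkeeping can be simplified, and some must be adjusted.
\begin{itemize}
\item The countable dense family of $\tilde\rho_0$ built into the radii is superfluous. The radii you already choose for the flow force $e^{2\int\|\nabla v^{(m)}\|_{L^\infty}}r\to 0$ as $m\to\infty$, so the error term vanishes for each fixed smooth $\tilde\rho_0$. The paper even lets $\rho^k_0$ vary, with $\|\nabla\rho^k_0\|_{L^\infty}=o(k)$.
\item Uniqueness must come from the triangle inequality through the common $\tilde\rho$, not from linearity. The class of solutions satisfying the energy inequality is not closed under differences, so you cannot reduce to $\rho_0=0$.
\item In the flow estimate, the constant must contain the compressibility constant $L$ of the arbitrary RLF, and the radii cannot absorb it. It enters only as a fixed factor ($L^{1/2}$ in the paper), so letting $m\to\infty$ still works.
\item The logarithmic functional and the localization on balls are unnecessary. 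Since $v$ is Lipschitz, plain Gr\"onwall plus the compressibility of the rough flow gives $\big\|\sup_{t\le T}|X_t-Y_t|\big\|_{L^2(\dd x)}\le L^{1/2}e^{\int_0^T\|\nabla v_r\|_{L^\infty}\dd r}\int_0^T\|u_r-v_r\|_{L^2}\dd r$ on all of $D$.
\item To show that the classical flows are Cauchy, run each pairwise comparison with the field having the smaller $\int_0^T\|\nabla v_r\|_{L^\infty}\dd r$, as the paper does. Otherwise the term $e^{\int\|\nabla v^{(m)}\|_{L^\infty}}\|v^{(m')}-u\|$ is uncontrolled.
\end{itemize}
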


In the above statement, when referring to unique solutions $\rho^n$ to \eqref{eq:intro_viscous_passive_scalar}, we mean in the class of {\em parabolic solutions} $\rho^n\in L^\infty([0,+\infty);L^2)\cap L^2_{\loc}([0,+\infty);H^1)$ (see e.g. \cite[Thm. 2.7]{BCC2024} or \cite[Prop. 2.9]{GalLuo2024}, as well as~\cite{FeNeOl2018,LeBLio2019} for relevant precursors).
The notions of {\em regular Lagrangian flow} and {\em renormalized solutions}, coming from the theory developed by DiPerna--Lions~\cite{diperna1989ordinary} and Ambrosio~\cite{Ambrosio2004}, are recalled in Section~\ref{subsec:proof_thm2}.

\begin{remark}\label{rem:intro_passive_scalars}
    Combined with Theorem~\ref{thm:intro_main}, the same argument used in the proof of Theorem~\ref{thm:intro_passive_scalars} allows for several variants. For instance, for any $u_0\in \cG$ and $\rho_0\in L^2$, if one considers sequences $u^n_0\to u_0$, $\rho^n_0\to \rho$, $\nu_n\to 0^+$, $\kappa_n\to 0^+$ and the unique solutions $(u^n,\rho^n)$ to the coupled system
    \begin{equation}\label{eq:intro_NS_coupled_scalar}
        \begin{cases}
        \partial_t u^n+ (u^n\cdot\nabla)u^n +\nabla p^n=\nu_n \Delta u^n,\\
        \partial_t \rho^n+ u^n\cdot\nabla\rho^n=\kappa_n \Delta\rho^n,\\
        \nabla\cdot u^n=0,\quad u^n\vert_{t=0}=u^n_0, \quad\rho^n\vert_{t=0}=\rho^n_0,
    \end{cases}
    \end{equation}
    then it's easy to similarly show that
    \begin{equation*}
            \lim_{n\to\infty} \sup_{t\in [0,\tau]} [\| u^n_t-u_t\|_{L^2}+\| \rho^n_t -\rho_t\|_{L^2}]=0, \quad \lim_{n\to\infty} \int_0^\tau [\nu_n \| \nabla u^n_t\|_{L^2}^2 + \kappa_n \| \nabla \rho^n_t\|_{L^2}^2] \dd t =0 \quad \forall\, \tau\in (0,+\infty).
    \end{equation*}
\end{remark}

On the torus $\T^2$, the unique solutions $u$ constructed in Theorem~\ref{thm:intro_main} can also be recovered by Galerkin approximations. In the next statement $\Pi_n:L^2_\sigma\to L^2_\sigma$ denotes the projection on Fourier modes $k\in\Z^2\setminus\{0\}$ with magnitude $|k|\leq N$.

\begin{corollary}\label{cor:intro_galerkin}
    Let $D=\T^2$ and let $\cG$ be given by Theorem~\ref{thm:intro_main}; for any $u_0\in \cG$, let $u$ be the associated unique solution to \eqref{eq:intro_euler}.
    Then $u$ can be recovered by Galerkin approximations: for any sequence $(u^n_0)_n\subset L^2_\sigma(\T^2)$ such that $u^n_0=\Pi_n u^n_0$ and $u^n_0\to u_0$ in $L^2$, the solutions $u^n$ to
    \begin{equation}\label{eq:intro_Galerkin}
        \begin{cases}
        \partial_t u^n+ \Pi_n [(u^n\cdot\nabla)u^n] = 0,\\
        \nabla\cdot u^n=0,\quad u^n\vert_{t=0}=u^n_0
    \end{cases}
    \end{equation}
    are such that
    \begin{equation}\label{eq:intro_convergence.Galerkin}
        \lim_{n\to\infty} \sup_{t\in [0,\tau]} \| u^n_t -u_t\|_{L^2}=0 \quad \forall\, T\in (0,+\infty).
    \end{equation}
\end{corollary}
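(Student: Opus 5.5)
The plan is to follow the same template as the vanishing viscosity part of Theorem~\ref{thm:intro_main}: approximate $u_0$ by a smooth datum, use a weak-strong stability estimate for the Galerkin system around the smooth Euler solution, and exploit the structure of $\cG$. Recall that $\cG$ is a countable intersection of open dense sets $\cG=\bigcap_k \cO_k$, where each $\cO_k$ is a union of $L^2$-balls $B(v_0,\delta(v_0,k))$ centred at smooth data $v_0$. The radii $\delta$ are chosen via weak-strong stability so that every admissible (measure-valued) solution starting in the ball stays $1/k$-close to the smooth solution $v$ on $[0,k]$. For $u_0\in\cG$ we can therefore pick, for each $k$, a smooth $v_0^k$ with $\|u_0-v_0^k\|_{L^2}<\delta_k$, and $\sup_{t\le k}\|u_t-v^k_t\|_{L^2}\le 1/k$. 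It then suffices to show that the Galerkin solutions $u^n$ are eventually close to $v^k$ on $[0,\tau]$, with an error controlled by the same quantity as in the definition of $\delta_k$, plus a term vanishing as $n\to\infty$.

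The key step is a Galerkin weak-strong estimate. Let $v$ be the smooth Euler solution with datum $v_0$. The system \eqref{eq:intro_Galerkin} conserves energy exactly, $\|u^n_t\|_{L^2}=\|u^n_0\|_{L^2}$, since $\Pi_n$ is self-adjoint and $\langle (u^n\cdot\nabla)u^n,u^n\rangle=0$. Set $w=u^n-v$ and compute
\begin{equation*}
\frac{\dd}{\dd t}\|w_t\|_{L^2}^2 = -2\langle (w\cdot\nabla)v, w\rangle + 2\langle (u^n\cdot\nabla) u^n, (I-\Pi_n)v\rangle .
\end{equation*}
This is obtained by expanding $\|u^n\|^2-2\langle u^n,v\rangle+\|v\|^2$, using energy conservation for both $u^n$ and $v$, and the identity $\langle \Pi_n[(u^n\cdot\nabla)u^n],v\rangle=\langle (u^n\cdot\nabla)u^n,\Pi_n v\rangle$. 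The commutator term is bounded by
\begin{equation*}
\|u^n\|_{L^2}^2\,\|\nabla(I-\Pi_n)v\|_{L^\infty},
\end{equation*}
after writing it as $-\langle u^n\otimes u^n,\nabla(I-\Pi_n)v\rangle$. Since $v$ is smooth on $[0,\tau]$ and $\|u^n\|_{L^2}$ is uniformly bounded, this term tends to $0$ uniformly in time as $n\to\infty$. Gronwall then gives
\begin{equation*}
\sup_{[0,\tau]}\|u^n_t-v_t\|^2_{L^2}\le e^{2\tau\|\nabla v\|_{L^\infty_{t,x}}}\bigl(\|u^n_0-v_0\|_{L^2}^2+o_n(1)\bigr).
\end{equation*}

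To conclude, $\|u^n_0-v^k_0\|_{L^2}\le\|u_0^n-u_0\|_{L^2}+\delta_k$. Provided $\delta_k$ was chosen with $\delta_k e^{k\|\nabla v^k\|_{L^\infty}}\le 1/k$, which is exactly the form the weak-strong criterion from \cite{BDLS2011} produces, so the same $\cG$ works, we get $\limsup_n\sup_{t\le\tau}\|u^n_t-u_t\|_{L^2}\lesssim 1/k$ for all $k\ge\tau$. Letting $k\to\infty$ proves \eqref{eq:intro_convergence.Galerkin}.

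The main obstacle is making sure the radii defining $\cG$ are compatible with the Galerkin estimate. The stability constant for the Galerkin system involves only $\|\nabla v\|_{L^\infty}$, the same as for admissible solutions, so no redefinition of $\cG$ is needed. I would check this against the paper's explicit construction in Section~\ref{sec:recap_solutions}. Should the radii there involve other norms of $v$, I would instead argue by compactness: the $u^n$ are energy-conserving and their limit points are admissible measure-valued solutions, so uniqueness from Theorem~\ref{thm:intro_main}(b), combined with conservation of the norm, would upgrade weak convergence to strong convergence.
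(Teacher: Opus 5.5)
Your argument is correct, but it takes a genuinely different route from the paper.

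\textbf{The paper's route.} The paper argues by compactness. Lemma~\ref{lem:galerkin_approximations} combines exact energy conservation, a uniform $H^{-d/2-\delta}$ time-Lipschitz bound and Corollary~\ref{cor:compactness_gYM}. It shows that every limit point of $(\delta_{u^n},0,0)$ is an admissible measure-valued solution, and that it even satisfies the generalized energy equality. Uniqueness from Theorem~\ref{thm:intro_main}(b) then gives convergence of the whole sequence in $C_w([0,\tau];L^2_\sigma)$. Lemma~\ref{lem:strong_convergence} upgrades this to uniform strong convergence. This is essentially your fallback plan.

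\textbf{Your route.} You run a relative-energy estimate for the Galerkin system directly against the Euler solutions $v^k$ issued from the centres $\varphi_{n_k}$, treating the truncation as a consistency error. Your constants match the paper's $\cG$. The radii are $\frac1k e^{-Tr_n(T)}$ and $\int_0^T\|\nabla v^k_s\|_{L^\infty}\dd s\le T r_{n_k}(T)$. Hence $\limsup_n\sup_{t\le\tau}\|u^n_t-v^k_t\|_{L^2}\le 1/k$, while $\sup_{t\le T}\|u_t-v^k_t\|_{L^2}\le 1/k$ follows from Step~1 of the proof of Theorem~\ref{thm:intro_main}.

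Two minor points:
\begin{itemize}
\item The term $\langle (u^n\cdot\nabla)u^n,(I-\Pi_n)v\rangle$ actually enters with a minus sign. This is harmless, since you only bound its absolute value.
\item Its vanishing needs only that the centres lie in $H^3_\sigma$, via $\|\nabla(I-\Pi_n)v_t\|_{L^\infty}\lesssim n^{-1}\|v_t\|_{H^3}$ on $\T^2$, uniformly for $t\le\tau$. So "smooth" can be weakened to $H^3$, which is what the paper's centres are.
\end{itemize}

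\textbf{Comparison.} Your route is elementary and quantitative. It bypasses generalized Young measures, Lemma~\ref{lem:galerkin_approximations} and Lemma~\ref{lem:strong_convergence} altogether, and gives uniform strong convergence in one stroke. The paper's route is modular: the only Galerkin-specific input is that limit points are admissible measure-valued solutions, after which only qualitative uniqueness is used. Combined with the uniqueness part of Lemma~\ref{lem:strong_stability}, it therefore yields Galerkin convergence on $\T^d$ for every $u_0\in\cF$, in any $d\geq 2$. Yours instead requires the quantitative smooth approximants that define $\cG$.
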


The proofs of Theorem~\ref{thm:intro_main}, Theorem~\ref{thm:intro_passive_scalars} and Corollary~\ref{cor:intro_galerkin} will be presented in Section~\ref{sec:proofs}.
Theorem~\ref{thm:intro_main} admits a few additional interesting consequences, for instance when considering the system~\eqref{eq:intro_euler} in vorticity form; we postpone them to Section~\ref{sec:consequences}.

\begin{remark}\label{rem:intro_fattened_rationals}
    The relevance of Theorem~\ref{thm:intro_main} should not be overstated, as it is not clear to what extent the set $\cG$ is truly representative of the ``typical'' behaviour of initial conditions and their associated solutions.
    Indeed, the construction of $\cG$ closely mimicks that of the ``fattened rationals'' on $\R$: let $\{q_n\}_n$ be an enumeration of $\Q$ and set
    \begin{equation*}
        A_r:=\bigcup_{n\in\N} (q_n-2^{-n} r,q_n+2^{-n}r), \quad A:=\bigcap_{k=1}^{+\infty} A_{1/k}.
    \end{equation*}
    It's easy to see that $A$ is residual in $\R$ in the sense of Baire, but it has Lebesgue measure zero; so it's very far from being representative of typical real numbers in a measure-theoretic sense. It would be interesting in the future to investigate generic well-posedness results for PDEs by adopting measure-theoretic concepts of genericity, like that of {\em prevalence}~\cite{HuSaYo1992}.
\end{remark}

\begin{remark}
    In connection to Remark~\ref{rem:intro_fattened_rationals}, let us note that any probability measure $\mu$ in a separable Banach space $E$ is tight (see e.g. \cite[Prop. 2.1]{DaPZab2014}); therefore, when $E$ is infinite dimensional, there exists a meager set $\mathcal{H}\subset E$ such that $\mu(\mathcal{H})=1$ (one can take $\mathcal{H}$ to be a countable union of compact sets). As a consequence, for a fixed probability measure $\mu$ on $L^2_\sigma$, Theorem~\ref{thm:intro_main} does not provide any information on the behaviour of random weak solutions, associated to initial conditions $u_0$ sampled with respect to $\mu$; viceversa, any $\mu$-almost sure statement about such solutions does not imply its Baire-generic counterpart.
\end{remark}

\subsection{Relations with the existing literature}\label{subsec:literature}

We discuss here the relevance of our main results in comparison with the existing ones for 2D and 3D Euler. We divide the exposition in thematic blocks.

\textit{Convex integration.} Starting with~\cite{DeLSze2009}, there is by now a huge body of literature on applications of convex integration techniques to construct infinitely many weak solutions to fluid dynamics equations, see~\cite{szekelyhidi2015weak} for a short overview. In particular, this led to the resolution of the flexible part of Onsager's conjecture~\cite{Isett2018} and the construction of infinitely many weak solutions to 3D Navier--Stokes~\cite{BucVic2019}. For recent refinements in 3D Euler, let us mention \cite{Isett2024,GKN2023}; the same techniques have been to readapted in 2D as well~\cite{GirRad2024}, similarly yielding the existence for any $\beta<1/3$ of $\beta$-H\"older continuous, compactly supported in time weak solutions, which do not conserve the energy.
In 2D, convex integration techniques have been recently successfully implemented for $2$D Euler equations in vorticity form, namely 
\begin{equation}\label{eq:intro_euler_vorticity}
	\partial_t\omega + u\cdot\nabla\omega=0,\quad
	\omega=\nabla^\perp\cdot u,
\end{equation}
resulting in the construction of non-unique, compactly supported in time solutions whose vorticity $\omega$ takes values in suitable function spaces; see~\cite{BruCol2023} ($\omega$ belonging to Lorentz spaces $L^{1,q}$),~\cite{BucMod2024,BucMod2026} (Hardy spaces) and~\cite{BCK2024} ($L^p$ spaces with $p>1$ small enough).
Theorem~\ref{thm:intro_main} implies the impossibility in $2$D of such convex integration constructions for a residual set of initial conditions $u_0\in L^2_\sigma$.

\textit{Existence results and energy conservation for $2$D Euler.}
Starting with~\cite{DiPMaj1987}, exploiting the active scalar structure \eqref{eq:intro_euler_vorticity} and the associated Casimir invariants, weak solutions to the $2$D Euler equations have been constructed for any initial conditions $u_0\in L^2_\sigma$ with vorticity $\omega_0\in L^p$ for some $p\in [1,\infty)$. The results have been subsequently refined in~\cite{Delort1991,Schochet1995} to allow for initial conditions with measure-valued vorticity $\omega_0$ with some ``preferential sign''.
Such weak solutions can be constructed by compactness arguments and approximation schemes, like vanishing viscosity, smooth initial data approximation or vortex methods.
They display several physical properties, like conservation of energy and/or enstrophy, see~\cite{LFMNL2006,CFLNS2016} and the recent refinement~\cite{DeRPar2025}; it is worth pointing out that compactly supported in time, weak solution to 2D Euler constructed by convex integration methods cannot be obtained by such approximations schemes, since they violate conservation of energy.
It is worth stressing that in $2$D, thanks to the special structure \eqref{eq:intro_euler_vorticity}, such conservative solutions can be constructed even in Onsager supercritical regularity regimes (e.g. $L^2_\sigma\cap W^{1,1}$ does not even embed in $C^0$).
Theorem~\ref{thm:intro_main} is somewhat similar in spirit: the $2$D structure allows to push the solution theory for generic initial data much further compared to what scaling arguments would suggest ($u_0\in W^{1,\infty}$ is scaling critical) and what can be currently accomplished in $3$D.  

When additionally $\omega_0\in L^p$ for some $p\in [1,\infty)$, as a consequence of the structure of \eqref{eq:2D_euler_vorticity} and the approximation scheme, the weak solutions $u$ constructed in this way admit a unique associated regular Lagrangian flow, and the vorticity $\omega$ is a renormalized, Lagrangian solution to \eqref{eq:intro_euler_vorticity} see \cite{CriSpi2015,CNSS2017,CCS2020,CCS2021}. On $\mathbb{T}^2$, concerning convergence of Galerkin schemes to such solutions (as well as more regular solutions in the Yudovich class), see \cite{BerSpi2024,LiuXin2000} and the references therein.
Despite all these features, for $u_0\in L^2_\sigma\cap W^{1,p}$ with $p<\infty$, uniqueness of solutions to \eqref{eq:intro_euler_vorticity} obtained (for instance) by vanishing viscosity is still an open problem.
Our main results admit natural variants in terms of a residual set of initial conditions $u_0\in L^2_\sigma\cap \dot W^{1,p}$, in which case the unique associated $\omega$ solves \eqref{eq:intro_euler_vorticity}, see Corollary~\ref{cor:vorticity}; as a consequence, for such $u_0$, all the aforementioned approximation schemes must converge in suitable topologies to the associated unique solution.

\textit{Further non-uniqueness results in $2$D Euler.}
Vishik has recently shown non-uniqueness of $L^p$-valued solutions to the {\em forced} 2D Euler equations in vorticity form in~\cite{vishik2018instability,vishik2018instabilityII}, by constructing a reference unstable background from which infinitely many solutions emanate. The technique has been revisited and refined in~\cite{ABCDLGJK2024,CFMS2025} and readapted to other PDEs in \cite{AlCo2023,AlBrCo2022,castro2025unstable,mengual2026sharp}.
In a different direction, Bressan and coauthors~\cite{BreMur2020,BreShe2021} have set forth a (numerically assisted) strategy towards non-uniqueness of the unforced 2D Euler, with vorticity in $L^p_{\loc}$, based on a symmetry breaking argument.

\textit{Anomalous dissipation and passive scalars.}
Motivated by its relation to the Kolmogorov and Obukhov--Corrsin theories of turbulence, several authors have recently focused on the construction of irregular, divergence free velocity fields $b$ such that the associated solutions $\rho^\kappa$ to the advection-diffusion equation $\partial_t \rho^\kappa + b\cdot\nabla\rho^\kappa=\kappa\Delta\rho^\kappa$ display anomalous dissipation of energy, in the sense that
\begin{align*}
	\limsup_{\kappa\to 0} \int_0^\tau \kappa \| \nabla\rho^\kappa\|_{L^2}^2 > 0,
\end{align*}
which implies after passing to the limit the existence of non-conservative solutions $\rho$ to the inviscid transport PDE, such that $\| \rho_\tau\|_{L^2}<\| \rho_0\|_{L^2}$. See for instance the works \cite{DrElIyJe22,CoCrSo23,ElLi24,JoSo24+,ArmVic2025,HeRo25+}.
Recently, combining the techniques from~\cite{ArmVic2025} with a convex integration scheme,~\cite{burczak2023anomalous} managed to construct, for any given $\tau\in (0,+\infty)$ and $\beta\in (0,1/3)$, weak solutions $C^\beta([0,\tau]\times \T^3)$ to the 3D Euler equations whose associated passive scalars display anomalous dissipation, for any non-zero $\rho_0\in H^1(\T^3)$ with zero mean. Theorem~\ref{thm:intro_passive_scalars} and Remark~\ref{rem:intro_passive_scalars} imply the impossibility of a similar construction in $2$D, at least for a residual set of initial conditions $u_0\in L^2_\sigma$; in $2$D, for $u_0\in L^2_\sigma\cap \dot W^{1,p}$, anomalous dissipation of energy is also excluded in light of the available stability results for regular Lagrangian flows, see for instance~\cite{BCC2022}.

\textit{Generic well-posedness results for ODEs and transport PDEs.}
Existence and uniqueness of a flow for the ODE associated to a residual set of velocity fields $b\in L^1([0,\tau];C^0)$ has been originally established by Orlicz~\cite{orlicz1932theorie}; see~\cite{AliBah2001} for extensions to SDEs and delayed ODEs.
Lions~\cite{Lions1998} extended the result by showing the existence and uniqueness of a regular Lagrangian flow for a residual set of divergence free drifts $b\in L^p$, $p\in [1,\infty)$. 
The results have been lifted to the PDE level by similarly establishing (by different arguments) the well-posedness of the continuity and transport PDEs associated to $b\in L^1([0,\tau];L^p)$ in~\cite{Bernard2010,cianfrocca2025some}.
The proof of Theorems~\ref{thm:intro_main}-\ref{thm:intro_passive_scalars} is strongly inspired by~\cite{Lions1998}; however, we are not aware of similar arguments having being developed before in the context of nonlinear fluid dynamics equations.

\textit{Other results for 2D Euler with generic initial conditions.} 
The use of random initial conditions, sampled according to suitable probability measures $\mu_0$, can be used to improve existence and/or uniqueness results in PDEs (especially of dispersive type).
For $2$D Euler, let us mention the seminal work by Albeverio--Cruzeiro \cite{AlbCru1990}, constructing weak solutions associated to the enstrophy measure $\mu_0$ (a formally invariant, Gibbs type measure for \eqref{eq:intro_euler_vorticity}); $\mu_0$ takes values in spaces of distributions and in particular $\mu_0(H^{-1-}(\T^2)\setminus \cM(\T^2))=1$, where $\cM(\T^2)$ denotes the set of signed measures on $\T^2$, thus going beyond the Schochet--Delort class. See also the alternative construction presented in \cite{Flandoli2018}.
In the same vein, for other fluid dynamics PDEs, see \cite{NaPaSt2013} for $\mu_0$ supported in supercritical spaces for the Navier--Stokes equations , as well as the review  \cite{NahSta2019}.
The recent work \cite{iyer2025incompressible} instead constructs random initial conditions on $\R^2$, with bounded vorticity and unbounded, slowly growing vorticity, for which well-posedness for \eqref{eq:intro_euler} still holds.
Concerning Baire-generic results, after the completion of the first version of this manuscript, we became aware of the works \cite{GuKoLi2023,lindberg2024integrability}, discussing the behaviour of the energy of weak solutions to the Euler and Navier--Stokes equations, for residual sets of initial conditions from suitable Banach spaces~$E$. The techniques employed therein are rather different, but \cite[Thm. 1.5]{lindberg2024integrability} (alhtough stated in a conditional fashion and in $3$D) definitely shares some similar insights. 
Finally, in a different direction, let us mention the recent result~\cite{alazard2026generic}, concerning the blow-up of strong norms $\| u_t\|_{H^s}$ as $t\to\infty$, for a residual set of initial conditions $u_0\in H^s_\sigma$.

\subsection{Notations and conventions}\label{subsec:notation}

Throughout the paper we will adopt the following:

\begin{itemize}
    \item $\N^\ast=\{1,2,..\}$ denotes the set of strictly positive natural numbers.
    \item We write $a\lesssim b$ if there exists $C>0$ such that $a\leq C b$; we may write $a\lesssim_\lambda b$ o stress the dependence $C=C(\lambda)$.
    \item $\mathscr{L}^d$ stands for the $d$-dimensional Lebesgue measure on $D$, $D=\T^d$ or $\R^d$.
    \item Given a Banach space $E$ and a map $f:[0,\tau]\to E$, we use subscripts to denote time evaluation: $f_t=f(t)\in E$.
    \item We denote by $L^p(D)$ the usual Lebesgue spaces on $D$, similarly for the inhomogeneous fractional Sobolev spaces $H^s(D)$ and their homogeneous counterparts $\dot H^s(D)$. Whenever there is no ambiguity, we will omit the domain $D$ and just write $L^p$, $H^s$, $\dot H^s$.
    \item Similarly, we may just write $L^2_\sigma$ in place of $L^2_\sigma(D)$ as defined in \eqref{eq:defn_L^2_sigma}; we set $H^s_\sigma:= L^2_\sigma\cap H^s$.
    \item We may use $\langle f,g\rangle$ to denote alternatively the inner product in $L^2(D)$, or the duality pairing between smooth functions and distributions; in both cases, it is always a pairing w.r.t. the space variable $x$, not the time variable $t$.
    \item Given a domain $\Omega$, we denote by $C^0(\Omega)=C^0$ the set of continuous, bounded functions $\varphi:\Omega\to \R$; $\| \varphi\|_{C^0}$ stands for the supremum norm. Similarly $C^0_c(\Omega)=C^0_c$ consists of the subset of continuous, compactly supported functions.
    \item Given $\tau\in (0,+\infty)$, a Banach space $E$ and $p\in [1,\infty]$, we denote by $L^p([0,\tau];E)$ the Lebesgue--Bochner space of strongly measurable, $E$-valued maps such that $\| f\|_{L^p([0,\tau];E)}^p:=\int_0^\tau \| f_t\|_E^p \dd t<\infty$ (with the usual replacement by the essential supremum norm when $p=+\infty$).
    \item Given $\tau\in (0,+\infty)$ and a Banach space $E$, $C([0,\tau];E)$ stands for the Banach space of continuous, $E$-valued maps, endowed with the uniform convergence.
    \item Given a reflexive, separable Banach space $E$ and $\tau\in (0,+\infty)$ we denote by $C_w([0,\tau];E)$ the space of weakly continuous functions $f:[0,\tau]\to E$, endowed with the notion of uniform weak convergence from~\cite{GaLeNi2026}; namely, $f^n\to f$ in  $C_w([0,\tau];E)$ if, for any $g\in E^\ast$, one has $\sup_{t\in [0,\tau]} |\langle f^n_t-f_t,g\rangle|\to 0$ as $n\to\infty$.
    This concept of convergence will be very useful when performing compactness arguments; we refer to \cite[Sec. 2.5]{GaLeNi2026} for a deeper discussion on this notion of convergence.
    Here, let us only the following useful facts: i) $C_w([0,\tau];E)\subset L^\infty([0,\tau];E)$; ii) if $(f^n)_n \subset C_w([0,\tau];L^2)$ is a sequence such that
    \begin{equation*}
        \sup_n \sup_{t\in [0,\tau]} \| f^n_t\|_{L^2} <\infty, \quad  \sup_n \sup_{s\neq t\in [0,\tau]} \frac{\| f^n_t-f^n_s\|_{H^{-\gamma}}}{|t-s|^\beta} <\infty
    \end{equation*}
    for some $\beta,\gamma>0$, then $(f^n)_n$ is sequentially precompact in $C_w([0,\tau];L^2)$; the same holds with $L^2$ replaced by $L^2_\sigma$.
\end{itemize}

\subsection{Structure of the paper}
Section~\ref{sec:generalized_Young} collects several facts concerning generalized Young measure and their compactness, which will be needed in the context of measure-valued solutions to \eqref{eq:intro_euler}.
Section~\ref{sec:recap_solutions} recalls known results about classical and weak solutions to 2D Euler \eqref{eq:intro_euler} and Navier--Stokes \eqref{eq:intro_NS} equations.
With these preparations, the proofs of our main results are presented in Section~\ref{sec:proofs}.
Section~\ref{sec:consequences} discussed some further consequences and Appendix~\ref{app:tech-lem} collects the proof of some useful technical results.

\section{Generalized Young measures}\label{sec:generalized_Young}

Generalized Young measures were first introduced in~\cite{DiPMaj1987} and later revisited in~\cite{AliBou1997}.
In the context of the Euler equations, their relevance comes from the available existence and weak-strong uniqueness results, which will be discussed more in detail in the next section.

We collect below several useful results about generalized Young measures, which partially go beyond the direct scope of applications to the Euler equations; we believe them to be of independent interest and possibly relevant for further applications.
The duality approach presented here was systematically investigated in~\cite{KriRin2010,DePRin2017}. We mostly follow the exposition from the lecture notes~\cite{KriRai2020}, appropriately readapted to our purposes; the only minor difference is in the choice of the domain, since we want to include the torus case as well.

In the following, let $d\geq 2$ and let $\Omega$ be either $\T^d$ or an open, bounded set in $\R^d$ such that $\mathscr{L}^d(\partial \Omega)=0$. Let $\tau \in (0,+\infty)$ fixed and set $\Omega_\tau:=[0,\tau]\times \Omega$.
For convenience, we will often denote elements of $\Omega_\tau$ by $y$; integration in $\dd y$ must be interpreted as $\dd t \dd x$.
Let $m\geq 2$ be fixed; we set $\mathbb{B}^m:=\{z\in\R^m:|z|\leq 1\}$ and $\S^{m-1}:=\{z\in\R^m: |z|=1\}$; given a domain $A$, we denote by $\cM^+(A)$ the set of non-negative finite measures on $A$ and by $\cP(A)$ the one of probability measures on $A$. In what follows we fix $p\in (1,\infty)$, our main interest being $p=2$.

\begin{definition}\label{defn:gYM}\footnote{We ask the reader not to confuse the notations $\nu$, $\boldsymbol{\nu}$ adopted here with the viscosity parameter appearing in \eqref{eq:intro_vanishing_viscosity}; we apologize for the inconvenience and we hope that the notation will always be clear from the context.}
    A {\em generalized Young measure} on $\R^m$ with parameters in $\Omega_\tau$ is a tuple $\boldsymbol{\nu}=((\nu_y)_{y\in \Omega_\tau},\lambda,(\nu_y^\infty)_{y\in \Omega_\tau})$ such that:
    \begin{itemize}
        \item[i)] $y\mapsto \nu_y\in L^\infty(\Omega_\tau;\cP(\R^m))$;
        \item[ii)] $\lambda\in \cM^+(\Omega_\tau)$;
        \item[iii)] $y\mapsto \nu^\infty_y\in L^\infty(\Omega_\tau,\lambda; \cP(\S^{m-1}))$;
        \item[iv)] $\int_{\Omega_\tau}\int_{\R^m} |z|^p \nu_y(\dd z) \dd y<\infty$.
    \end{itemize}
    We denote the collection of such $\boldsymbol{\nu}$ by ${\rm Y}^p(\Omega_\tau,\R^m)$.
\end{definition}

\begin{definition}[$p$-admissible integrands]\label{def:p_admissible_integrands}
    We denote by $\mathcal{E}_p(\Omega_\tau,\R^m)$ the collection of continuous functions $\Phi:\Omega_\tau\times\R^m\to \R$ such that
    \begin{equation}\label{eq:p_admissible_integrands}
        \Phi^\infty_p(y,z):=\lim_{r\to\infty} \frac{\Phi(y,rz)}{r^p}\in\R \text{ locally uniformly for } (y,z)\in \Omega_\tau\times\R^m.
    \end{equation}
    For $\Phi\in \mathcal{E}_p(\Omega_\tau,\R^m)$, we call $\Phi^\infty_p$ the {\em $p$-recession function} of $\Phi$.
    We endow $\mathcal{E}_p(\Omega_\tau,\R^m)$ with the norm
    \begin{align*}
        \| \Phi\|_{\mathcal{E}_p(\Omega_\tau,\R^m)}:=\sup_{(y,z)\in \Omega_\tau\times \R^m} \frac{|\Phi(y,z)|}{(1+|z|)^p}.
    \end{align*}
\end{definition}

It follows from~\cite[Lem. 2.3]{KriRai2020} that, for any $\Phi\in \mathcal{E}_p(\Omega_\tau,\R^m)$, one has $\Phi^\infty_p \in \mathcal{E}_p(\Omega_\tau,\R^m)$ and
\begin{equation*}
    \Phi^\infty_p(y,rz)= r^p \Phi^\infty_p(y,z)\quad \text{for all } r\geq 0.
\end{equation*}
Moreover, by \cite[Lem. 2.4]{KriRai2020}, $\mathcal{E}_p(\Omega_\tau,\R^m)$ is isometrically isomorphic to $BUC(\Omega_\tau\times \mathbb{B}^m)$, therefore it is a separable Banach space. Denote by $\mathcal{E}_p(\Omega_\tau,\R^m)^\ast$ its topological dual.

One can identify ${\rm Y}^p(\Omega_\tau,\R^m)$ with a subset of $\mathcal{E}_p(\Omega_\tau,\R^m)^\ast$ by considering the duality pairing
\begin{align*}
    \langle\langle \boldsymbol{\nu},\Phi\rangle \rangle:=\int_{\Omega_\tau} \langle \nu_y, \Phi(y,\cdot) \rangle \dd y + \int_{\Omega_\tau} \langle \nu^\infty_y ,\Phi^\infty_p(y,\cdot)\rangle \lambda (\dd y)\quad \text{for } \boldsymbol{\nu}\in {\rm Y}(\Omega_\tau,\R^m), \quad \Phi\in\mathcal{E}_p(\Omega_\tau,\R^m)
\end{align*}
where we used the notation $\langle \nu_y, f(y,\cdot)\rangle:=\int_{\R^m} f(y,z)\nu_y(\dd z)$, similarly for $\nu^\infty_y$.
Indeed, it's immediate to check by the definitions that
\begin{equation}\label{eq:gYM_basic_duality}
    |\langle\langle \boldsymbol{\nu},\Phi\rangle \rangle| \leq \| \Phi\|_{\mathcal{E}_p(\Omega_\tau,\R^m)} \bigg( \int_{\Omega_\tau} \int_{\R^m} (1+|z|^p) \nu_y(\dd z) \dd y + 2 \lambda(\Omega_\tau)\bigg),
\end{equation}
where the last quantity is finite thanks to Definition~\ref{defn:gYM} and the fact that $\Omega_\tau$ is bounded.
The next statement follows from \cite[Cor. 2.8]{KriRai2020}.

\begin{theorem}
    ${\rm Y}^p(\Omega_\tau,\R^m)$ is convex and weakly-$\ast$ closed in $\mathcal{E}_p(\Omega_\tau,\R^m)^\ast$.
\end{theorem}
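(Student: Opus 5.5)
Convexity is direct, so the real work is weak-$\ast$ closedness, which I will get from the Riesz representation theorem applied through the isometry $\mathcal{E}_p(\Omega_\tau,\R^m)\cong BUC(\Omega_\tau\times\mathbb{B}^m)$ of \cite[Lem. 2.4]{KriRai2020}, in the spirit of \cite[Cor. 2.8]{KriRai2020}.

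\emph{Convexity.} Given $\boldsymbol{\nu}^1,\boldsymbol{\nu}^2\in{\rm Y}^p$ and $\theta\in[0,1]$, I set $\nu_y=\theta\nu^1_y+(1-\theta)\nu^2_y$ and $\lambda=\theta\lambda^1+(1-\theta)\lambda^2$. For the concentration part I write $\lambda^i=f^i\lambda$ with Radon--Nikodym densities $f^i$ and define
\begin{equation*}
\nu^\infty_y=\frac{\theta f^1(y)\nu^{1,\infty}_y+(1-\theta)f^2(y)\nu^{2,\infty}_y}{\theta f^1(y)+(1-\theta)f^2(y)}
\end{equation*}
wherever the denominator is positive, and an arbitrary fixed probability measure elsewhere (a $\lambda$-null set). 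The resulting tuple satisfies i)--iv), and by linearity of the pairing $\langle\langle\cdot,\Phi\rangle\rangle$ its action equals $\theta\boldsymbol{\nu}^1+(1-\theta)\boldsymbol{\nu}^2$.

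\emph{Closedness, reduction to bounded sets.} Since $\mathcal{E}_p$ is separable, by Krein--Smulian it suffices to show that ${\rm Y}^p\cap\{\|\cdot\|\le R\}$ is weakly-$\ast$ closed, where bounded sets are metrizable; so I only need sequential closedness on bounded sets. For this I will use the norm identity
\begin{equation*}
\|\boldsymbol{\nu}\|_{\mathcal{E}_p^\ast}\sim\int\!\!\int(1+|z|)^p\,\dd\nu_y\,\dd y+\lambda(\Omega_\tau),
\end{equation*}
obtained by testing with $\Phi=(1+|z|)^p$ together with \eqref{eq:gYM_basic_duality}.

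\emph{Characterizing the image.} Any $L\in\mathcal{E}_p^\ast$ becomes, via the isometry $\Phi\mapsto\tilde\Phi(y,\hat z)=(1-|\hat z|)^p\,\Phi\big(y,\hat z/(1-|\hat z|)\big)$, a functional on $BUC(\Omega_\tau\times\mathbb{B}^m)$. Since $\Omega_\tau$ is bounded, these functions extend continuously to the compact closure $K=\overline{\Omega_\tau}\times\mathbb{B}^m$, so $L$ is a signed Radon measure $\mu$ on $K$. I then claim that $L\in{\rm Y}^p$ if and only if:
\begin{enumerate}
\item $\mu\ge0$;
\item $\mu$ gives no mass to $\partial\Omega_\tau\times\mathbb{B}^m$ (here $\mathscr{L}^d(\partial\Omega)=0$ enters);
\item the projection of $\mu$ restricted to the open ball $\{|\hat z|<1\}$, weighted by $(1-|\hat z|)^{-p}$ and pushed back to $z$-coordinates, has first marginal exactly $\mathscr{L}^{d+1}$ on $\Omega_\tau$.
\end{enumerate}
Given these, disintegration produces $\nu_y$; the part of $\mu$ on $\S^{m-1}$ produces $\lambda$ and $\nu^\infty_y$; and iv) is condition iii) combined with finiteness of $\mu$.

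\emph{Main obstacle.} Conditions i) and iii) are not obviously weak-$\ast$ closed. Positivity is easy, since it is a pointwise limit of $\langle\langle\boldsymbol{\nu},\Phi\rangle\rangle\ge0$ for $\Phi\ge0$. The marginal condition is the delicate step. I would test it with $\Phi(y,z)=\varphi(y)$, $\varphi\in C_c(\Omega_\tau)$, which has $\Phi^\infty_p=0$ because $p>0$. This gives $\langle\langle\boldsymbol{\nu},\Phi\rangle\rangle=\int\varphi\,\dd y$, which is preserved in the limit. However, it only constrains $\mu$ on the interior and with the weight $(1-|\hat z|)^p\to0$ at the sphere, so mass may escape to the sphere. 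That is exactly what the concentration part $\lambda$ absorbs, and I must check that the limit still has total interior mass $\mathscr{L}^{d+1}$ with no loss. I expect to handle this with uniform boundedness plus tightness in $z$ at fixed interior level $\{|\hat z|\le 1-\delta\}$. Boundary mass on $\partial\Omega_\tau$ I would exclude via test functions concentrating near $\partial\Omega_\tau$, using $\mathscr{L}^{d+1}(\partial\Omega_\tau)=0$ and a uniform bound.
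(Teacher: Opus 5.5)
Your convexity argument and the identification of $\mathcal{E}_p(\Omega_\tau,\R^m)^\ast$ with signed measures on $\overline{\Omega_\tau}\times\mathbb{B}^m$ are fine. The genuine gap is your condition ii), together with the last sentence of your plan. A uniform bound controls how much concentration mass there is, not where it goes. Lebesgue-nullity of $\partial\Omega$ is irrelevant for the part of $\mu$ on $\overline{\Omega_\tau}\times\S^{m-1}$, which the marginal identity does not see. Nothing stops that part from drifting to the lateral boundary $[0,\tau]\times\partial\Omega$, which for a bounded open $\Omega\subset\R^d$ is \emph{not} contained in $\Omega_\tau$.

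Concretely, take $x_0\in\partial\Omega$, balls $B_n=B(x_n,r_n)\subset\Omega$ with $x_n\to x_0$ and $r_n\to 0$, a unit vector $e$, and $u^n=c_n\mathbf{1}_{[0,\tau]\times B_n}\,e$ with $c_n^p\,\tau\,\mathscr{L}^d(B_n)=1$. The measures $(\delta_{u^n},0,0)$ are bounded in $\mathcal{E}_p(\Omega_\tau,\R^m)^\ast$. By uniform continuity of $\tilde\Phi$ up to $\overline{\Omega_\tau}\times\mathbb{B}^m$, they converge weakly-$\ast$ to
\[
L(\Phi)=\int_{\Omega_\tau}\Phi(y,0)\,\dd y+\frac{1}{\tau}\int_0^\tau\Phi^\infty_p((t,x_0),e)\,\dd t,
\]
with $\Phi^\infty_p$ extended by continuity. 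Suppose $L$ were represented by some $(\nu,\lambda,\nu^\infty)$ with $\lambda\in\cM^+(\Omega_\tau)$. Set $\psi_k(t,x)=\min(1,k|x-x_0|)$ and test with $\Phi_k(y,z)=\psi_k(y)|z|^p$. This gives
\[
0=L(\Phi_k)=\int_{\Omega_\tau}\psi_k\,\langle\nu_y,|z|^p\rangle\,\dd y+\int_{\Omega_\tau}\psi_k\,\dd\lambda .
\]
Since $x_0\notin\Omega$, monotone convergence yields $\int\langle\nu_y,|z|^p\rangle\,\dd y+\lambda(\Omega_\tau)=0$, contradicting $L(|z|^p)=1$. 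So with the definition read literally, ${\rm Y}^p$ is not even sequentially closed on bounded sets, and no choice of test functions can establish ii). Read with the full topological boundary of $\Omega_\tau$, ii) would also wrongly exclude genuine elements whose $\lambda$ charges $\{0,\tau\}\times\Omega\subset\Omega_\tau$.

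The paper gives no argument of its own; it just invokes \cite[Cor. 2.8]{KriRai2020}. The standard generalized Young measure framework (for instance \cite{KriRin2010}) takes the concentration measure on the closure, $\lambda\in\cM^+(\overline{\Omega_\tau})$, with $\nu^\infty_y$ defined $\lambda$-a.e.\ there, and you should adopt that convention. For $\Omega=\T^d$ nothing changes, since $\Omega_\tau$ is already compact. With that convention you should drop ii), and your route then works and is easier than you expect. The image of ${\rm Y}^p$ is exactly the set of $\mu\in\cM(\overline{\Omega_\tau}\times\mathbb{B}^m)$ with $\mu\geq 0$ and
\[
\int(1-|\hat z|)^p\varphi(y)\,\mu(\dd y,\dd\hat z)=\int_{\Omega_\tau}\varphi\,\dd y\quad\text{for all }\varphi\in C(\overline{\Omega_\tau}).
\]
Test with all such $\varphi$, not only $\varphi\in C_c(\Omega_\tau)$; this is legitimate because $\Phi(y,z)=\varphi(y)$ lies in $\mathcal{E}_p$ for every $\varphi\in BUC(\Omega_\tau)$. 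Both conditions are constraints against fixed elements of the predual, so the set is an intersection of weak-$\ast$ closed half-spaces and hyperplanes. You need neither the Krein--\v{S}mulian reduction nor a tightness argument.

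What you call the main obstacle is therefore not one. Mass escaping to the sphere carries weight $(1-|\hat z|)^p\to 0$ in the marginal identity, and after disintegration it is precisely what becomes $\lambda$. The identity itself forces the part of $\mu$ on $\{|\hat z|<1\}$ to project onto $\mathscr{L}^{d+1}\llcorner\Omega_\tau$, so the oscillation part never charges the boundary.
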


As a consequence, we can endow ${\rm Y}^p(\Omega_\tau,\R^m)$ with the topology induced by $\mathcal{E}_p(D,\R^m)^\ast$, the latter being endowed with the weak-$\ast$ topology. Being weakly-$\ast$ closed, ${\rm Y}^p(\Omega_\tau,\R^m)$ is also sequentially closed. We will say that a sequence $(\boldsymbol{\nu}^n)_n$ in ${\rm Y}^p(\Omega_\tau,\R^m)$ converges to $\boldsymbol{\nu}$, $\boldsymbol{\nu}^n\to \boldsymbol{\nu}$ in ${\rm Y}^p(\Omega_\tau,\R^m)$ for short, if
\begin{align*}
    \lim_{n\to\infty}\langle\langle \boldsymbol{\nu}^n,\Phi\rangle \rangle=\langle\langle \boldsymbol{\nu},\Phi\rangle \rangle\quad \forall\, \Phi\in\mathcal{E}_p(\Omega_\tau,\R^m).
\end{align*}
In other words, $\boldsymbol{\nu}^n\to \boldsymbol{\nu}$ in ${\rm Y}^p(\Omega_\tau,\R^m)$ if and only if weak-$\ast$ convergence in $\mathcal{E}_p(\Omega_\tau,\R^m)$ holds.

We couldn't find an explicit reference in the literature for the next statement concerning the compactness of generalized Young measures, although it is clearly in line with other results like \cite[Cor. 2.8]{KriRai2020} or \cite[Lem. 2.1]{DePRin2017}.

\begin{corollary}[Compactness for generalized Young measures]\label{cor:compactness_gYM}
    Let $\mathcal{V}\subset {\rm Y}^p(\Omega_\tau,\R^m)$. The following are equivalent:
    \begin{itemize}
        \item[i)] $\mathcal{V}$ is precompact;
        \item[ii)] $\mathcal{V}$ is sequentially precompact;
        \item[iii)] $\sup_{\boldsymbol{\nu}\in\mathcal{V}} \Big\{ \int_{\Omega_\tau}\int_{\R^m} |z|^p \nu_y(\dd z) \dd y + \lambda(\Omega_\tau)\Big\} <\infty$.
    \end{itemize}
\end{corollary}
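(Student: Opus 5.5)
The plan is to work with the isometric identification $\mathcal{E}_p(\Omega_\tau,\R^m)\cong BUC(\Omega_\tau\times\mathbb{B}^m)$ from \cite[Lem. 2.4]{KriRai2020}. It makes $\mathcal{E}_p$ a separable Banach space. Its dual is therefore a dual of a separable space, so norm-bounded subsets of $\mathcal{E}_p^\ast$ are weak-$\ast$ metrizable, and by Banach--Alaoglu they are weak-$\ast$ compact. Since ${\rm Y}^p$ is weakly-$\ast$ closed by the preceding theorem, the whole statement reduces to showing that condition iii) is equivalent to norm-boundedness of $\mathcal{V}$ inside $\mathcal{E}_p^\ast$.

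The key step is to establish the two-sided bound
\begin{equation*}
c\,\Big(\int_{\Omega_\tau}\int_{\R^m}(1+|z|^p)\,\nu_y(\dd z)\,\dd y+\lambda(\Omega_\tau)\Big)\leq \|\boldsymbol{\nu}\|_{\mathcal{E}_p^\ast}\leq \int_{\Omega_\tau}\int_{\R^m}(1+|z|^p)\,\nu_y(\dd z)\,\dd y+2\lambda(\Omega_\tau).
\end{equation*}
The upper bound is just \eqref{eq:gYM_basic_duality}. For the lower bound I would test against $\Phi(y,z)=|z|^p$. Its $p$-recession function is $\Phi^\infty_p(y,z)=|z|^p$, which equals $1$ on $\S^{m-1}$, and $\|\Phi\|_{\mathcal{E}_p}\leq 1$. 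This test function gives
\begin{equation*}
\langle\langle\boldsymbol{\nu},\Phi\rangle\rangle=\int_{\Omega_\tau}\int_{\R^m}|z|^p\,\nu_y(\dd z)\,\dd y+\lambda(\Omega_\tau),
\end{equation*}
with every term nonnegative. Because $\Omega_\tau$ is bounded, the added constant $\mathscr{L}^{d+1}(\Omega_\tau)$ is harmless. So $\sup_{\mathcal{V}}\|\boldsymbol{\nu}\|_{\mathcal{E}_p^\ast}<\infty$ if and only if iii) holds.

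With the bound in hand, the implications follow.
\begin{itemize}
\item[iii)$\Rightarrow$i),ii):] $\mathcal{V}$ lies in a closed ball $B_R$ of $\mathcal{E}_p^\ast$. The set $B_R\cap{\rm Y}^p$ is weak-$\ast$ compact, being a closed subset of a compact set, and it is metrizable. Hence $\mathcal{V}$ is both precompact and sequentially precompact in ${\rm Y}^p$.
\item[ii)$\Rightarrow$iii):] Suppose iii) fails. Then there is a sequence in $\mathcal{V}$ with $\langle\langle\boldsymbol{\nu}^n,|z|^p\rangle\rangle\to\infty$. No subsequence of it can converge, since convergence tested against the single function $\Phi=|z|^p$ would force these pairings to stay bounded.
\item[i)$\Rightarrow$iii):] The closure of $\mathcal{V}$ is weak-$\ast$ compact. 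The map $\boldsymbol{\nu}\mapsto\langle\langle\boldsymbol{\nu},|z|^p\rangle\rangle$ is weak-$\ast$ continuous, so it is bounded on that compact set.
\end{itemize}
Here precompact means relatively compact in ${\rm Y}^p$, and ${\rm Y}^p$ is closed in $\mathcal{E}_p^\ast$, so closures taken in ${\rm Y}^p$ and in $\mathcal{E}_p^\ast$ coincide.

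The main obstacle is checking that the relevant topologies match. The equivalence between compactness and sequential compactness relies on metrizability, which only holds on bounded sets. That is why I would route i)$\Rightarrow$iii) through the continuous functional $\langle\langle\cdot,|z|^p\rangle\rangle$ rather than through metrizability. I would also double-check that $|z|^p$ really belongs to $\mathcal{E}_p$, meaning the recession limit exists locally uniformly. This holds trivially because $|z|^p$ is exactly $p$-homogeneous.
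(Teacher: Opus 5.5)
Your proposal is correct and follows essentially the same route as the paper: for iii)$\Rightarrow$i),ii) it uses Banach--Alaoglu together with metrizability of bounded sets, which comes from separability of $\mathcal{E}_p\cong BUC(\Omega_\tau\times\mathbb{B}^m)$, and for the converse implications it uses continuity of the single functional $\langle\langle\cdot,|z|^p\rangle\rangle$ (the paper phrases ii)$\Rightarrow$iii) with a maximizing sequence rather than by contrapositive). The two-sided norm bound is not actually used, since only the upper bound \eqref{eq:gYM_basic_duality} and the test function $|z|^p$ enter; if you want it literally, note that the constant term is not handled by ``$\Omega_\tau$ bounded'' and instead follows by testing against $\Phi\equiv 1$, whose $p$-recession function vanishes, so that $\|\boldsymbol{\nu}\|_{\mathcal{E}_p^\ast}\geq \mathscr{L}^{d+1}(\Omega_\tau)$.
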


\begin{proof}
    Let us assume iii). By virtue of \eqref{eq:gYM_basic_duality}, it then holds
    \begin{align*}
        \sup_{\boldsymbol{\nu}\in\cV} \| \boldsymbol{\nu}\|_{\mathcal{E}_p(\Omega_\tau,\R^m)^\ast} 
        \lesssim \sup_{\boldsymbol{\nu}\in\cV} \bigg(1 + \int_{\Omega_\tau} |z|^p \nu_y(\dd z) \dd y + \lambda(\Omega_\tau)\bigg) <\infty.
    \end{align*}
    By the Banach-Alaoglu theorem (\cite[Thm. 3.16]{Brezis2011}), bounded sets in $\mathcal{E}_p(\Omega_\tau,\R^m)^\ast$ are compact; moreover since $\mathcal{E}_p(\Omega_\tau,\R^m)$ is separable, bounded balls in $\mathcal{E}_p(\Omega_\tau,\R^m)^\ast$ are metrizable, cf. \cite[Thm. 3.28]{Brezis2011}. Therefore both i) and ii) follow.

    For the converse implications, note that by choosing $\Psi(y,z)=|z|^p=\Psi^\infty_p(y,z)\in \mathcal{E}_p(\Omega_\tau,\R^m)$, by definition of weak-$\ast$ topology the map
    \begin{align*}
        \boldsymbol{\nu}\mapsto \langle\langle \boldsymbol{\nu},\Psi\rangle \rangle=\int_{\Omega_\tau}\int_{\R^m} |z|^p \nu_y(\dd z) \dd y + \lambda(\Omega_\tau)
    \end{align*}
    is continuous, thus bounded on precompact sets; this proves the implication \textit{i)}$\Rightarrow$\textit{iii)}.

    Similarly, assume \textit{ii)} and consider a maximizing sequence $(\boldsymbol{\nu}^n)_n$ such that $\lim_{n\to\infty} \langle \langle \boldsymbol{\nu}^n, \Psi\rangle \rangle=\sup_{\boldsymbol{\nu}\in\cV} \langle \langle \boldsymbol{\nu}, \Psi\rangle \rangle$. Since $\cV$ is sequentially precompact, we can extract a (not relabelled) subsequence which converges to some $\bar{\boldsymbol{\nu}}\in {\rm Y}_p(\Omega_\tau,\R^m)$; thus $\lim_{n\to\infty} \langle \langle \boldsymbol{\nu}^n, \Psi\rangle \rangle=\langle \langle \bar{\boldsymbol{\nu}}, \Psi\rangle \rangle<\infty$, proving \textit{iii)}.
\end{proof}

Given a Young measure $\boldsymbol{\nu}\in {\rm Y}^p(\Omega_\tau,\R^m)$, denote by $\bar \nu$ its \emph{barycenter}, given by
\begin{equation}\label{eq:defn_barycenter}
    u(y)=\bar\nu_y:=\langle \nu_y, z \rangle:= \int_{\R^m} z\, \nu_y(\dd z).
\end{equation}

\begin{lemma}\label{lem:barycenter_weak_topology}
    For any $\boldsymbol{\nu}\in {\rm Y}^p(\Omega_\tau,\R^m)$, we have that $u\in L^p(\Omega_\tau;\R^m)$ with
    \begin{equation}\label{eq:estimate_barycenter}
        \| u\|_{L^p}^p \leq \int_{\Omega_\tau} \int_{\R^m} |z|^p \nu_y(\dd z) \dd y.
    \end{equation}
    If $\boldsymbol{\nu}^n\to \boldsymbol{\nu}$ in ${\rm Y}^p(\Omega_\tau,\R^m)$, then $u^n\rightharpoonup u$ weakly in $L^p(\Omega_\tau;\R^m)$.
\end{lemma}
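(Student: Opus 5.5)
The estimate \eqref{eq:estimate_barycenter} will follow from Jensen's inequality applied fiberwise. For a.e. $y\in\Omega_\tau$, $\nu_y$ is a probability measure and $z\mapsto |z|^p$ is convex, so
\begin{equation*}
    |u(y)|^p=\Big|\int_{\R^m} z\,\nu_y(\dd z)\Big|^p\leq \int_{\R^m}|z|^p\nu_y(\dd z).
\end{equation*}
The barycenter is well defined for a.e. $y$, because Definition~\ref{defn:gYM}\,iv) forces $\int|z|^p\nu_y(\dd z)<\infty$ for a.e. $y$. Measurability of $y\mapsto u(y)$ comes from the weak-$\ast$ measurability of $y\mapsto\nu_y$, after truncating $z$ and passing to a monotone limit. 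Integrating the pointwise bound over $\Omega_\tau$ gives \eqref{eq:estimate_barycenter}.

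For the convergence statement, I would first obtain a uniform bound. Since $\boldsymbol{\nu}^n\to\boldsymbol{\nu}$, the singleton-plus-sequence set is precompact, so Corollary~\ref{cor:compactness_gYM} (i)$\Rightarrow$(iii) gives $\sup_n\int\int|z|^p\nu^n_y(\dd z)\dd y<\infty$. More directly, one can test against $\Psi(y,z)=|z|^p$. By \eqref{eq:estimate_barycenter}, $(u^n)_n$ is then bounded in $L^p(\Omega_\tau;\R^m)$. Because $L^p$ is reflexive for $p\in(1,\infty)$, weak convergence of a bounded sequence reduces to testing against a dense subset of $L^{p'}$, and I would take $\varphi\in C^0_c(\Omega_\tau^\circ;\R^m)$, or $C^0(\Omega_\tau;\R^m)$.

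The main obstacle is that the natural test function $\Phi(y,z)=\varphi(y)\cdot z$ has only linear growth. With $p>1$ it therefore lies in $\mathcal{E}_p$ with $p$-recession function $\Phi^\infty_p\equiv0$: the limit $r^{1-p}\varphi(y)\cdot z\to0$ holds locally uniformly. Continuity and boundedness on $\Omega_\tau$ are clear, and $|\Phi|\leq\|\varphi\|_{C^0}(1+|z|)^p$. Consequently the concentration part $\lambda$ does not contribute, and
\begin{equation*}
    \langle\langle\boldsymbol{\nu}^n,\Phi\rangle\rangle=\int_{\Omega_\tau}\varphi(y)\cdot u^n(y)\,\dd y\longrightarrow \int_{\Omega_\tau}\varphi(y)\cdot u(y)\,\dd y.
\end{equation*}

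I still have to confirm that $\varphi$ must be bounded and uniformly continuous for $\Phi$ to lie in $\mathcal{E}_p\cong BUC(\Omega_\tau\times\mathbb{B}^m)$. This is fine, because $\Omega_\tau$ is bounded and $\varphi$ can be taken continuous on the closure. Density of such $\varphi$ in $L^{p'}$, together with the uniform $L^p$ bound, then yields $u^n\rightharpoonup u$ by a standard $\varepsilon/3$ argument. Uniqueness of the limit also shows that the whole sequence converges, not only subsequences.
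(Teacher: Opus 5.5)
Your proposal is correct and follows essentially the same route as the paper. You use Jensen's inequality fiberwise for the estimate, get a uniform $L^p$ bound from Corollary~\ref{cor:compactness_gYM} (or by testing against $|z|^p$), and identify the weak limit by testing against $\Phi(y,z)=\varphi(y)\cdot z$, whose $p$-recession function vanishes because $p>1$. Your extra care about taking $\varphi$ continuous up to the boundary (so that $\Phi\in\mathcal{E}_p$) and the explicit density argument only spell out what the paper leaves implicit.
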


\begin{proof}
    Estimate \eqref{eq:estimate_barycenter} is an immediate consequence of Jensen's inequality. Now assume $\boldsymbol{\nu}^n\to \boldsymbol{\nu}$ in ${\rm Y}^p(\Omega_\tau,\R^m)$, then by Corollary~\ref{cor:compactness_gYM} and \eqref{eq:estimate_barycenter} it holds $\sup_n \| u^n\|_{L^p_{t,x}}<\infty$; therefore $(u^n)_n$ is weakly compact in $L^p(\Omega_\tau;\R^m)$, to identify its limit we only need to test against $\varphi\in C^\infty(\Omega_\tau;\R^m)$.
    Consider $\Phi(t,x,z)=\varphi(t,x) \cdot z$, so that $\Phi^\infty_p(t,x,z)=0$ (since $p>1$); by definition it follows that
    \begin{equation*}
        \lim_{n\to\infty} \int_{\Omega_\tau} \varphi(y)\cdot u^n(y) \dd y
        = \lim_{n\to\infty} \langle \langle \boldsymbol{\nu}^n,\Phi\rangle \rangle
        = \langle \langle \boldsymbol{\nu},\Phi\rangle \rangle
        = \int_{\Omega_\tau} \varphi(y)\cdot u(y) \dd y. \qedhere
    \end{equation*}
\end{proof}

Given matrices $A=(a_{ij})_{i,j=1}^m$, $B=(b_{ij})_{i,j=1}^m$, we denote by $A:B=\sum_{i,j} a_{ij} b_{ij}$ their Frobenius product.
Fix any $M\in C^0_c(\Omega_\tau;\R^{m\times m})$; then using the definition of convergence in ${\rm Y}^2(\Omega_\tau,\R^m)$, with $\Phi(y,z)=\Phi^\infty_2(y,z)=M(y):z\otimes z\in \mathcal{E}_2(\Omega_\tau,\R^m)$, it's immediate to see that the map
\begin{equation}\label{eq:gYM_quadratic_functional_1}
    F(\boldsymbol{\nu})= \int_{\Omega_\tau} \bigg(M(y): \int_{\R^d} z\otimes z \, \nu_y(\dd z)\bigg) \dd y +\int_{\Omega_\tau} \bigg(M(y): \int_{\S^{d-1}} \tilde z\otimes \tilde z\, \nu^\infty_y(\dd \tilde z)\bigg) \lambda (\dd y)
\end{equation}
is sequentially continuous; similarly, for any $\chi\in C([0,\tau];\R)$, for the map
\begin{equation}\label{eq:gYM_quadratic_functional_2}
    E(\boldsymbol{\nu})=\int_{[0,\tau]\times \Omega} \chi(t) \langle \nu_{t,x}, |z|^2 \rangle\, \dd t \dd x + \int_{[0,\tau]\times \Omega} \chi(t) \lambda(\dd t, \dd x).
\end{equation}

The discussion so far was restricted to the case where $\Omega$ is either $\T^d$ or a bounded, open subset of $\R^d$, but it's clear that Definition~\ref{defn:gYM} readily extends to unbounded choices of $\Omega$, in particular $\Omega=\R^d$.
Similarly to Definition~\ref{def:p_admissible_integrands}, we can introduce the class of test functions $\cE_{p,c}([0,\tau]\times\R^d,\R^m)$, consisting of functions $\Phi:[0,\tau]\times\R^d\times \R^m\to \R$ such that \eqref{eq:p_admissible_integrands} holds and with the additional property that $\Phi$ is compactly supported in the variable $y\in [0,\tau]\times\R^d$.
In this case, we will say that a sequence $(\boldsymbol{\nu}^n)_n$ in ${\rm Y}^p([0,\tau]\times\R^d,\R^m)$ converges to $\boldsymbol{\nu}$, $\boldsymbol{\nu}^n\to \boldsymbol{\nu}$ in ${\rm Y}^p([0,\tau]\times\R^d,\R^m)$ for short, if
\begin{align*}
    \lim_{n\to\infty} \langle\langle \boldsymbol{\nu}^n,\Phi\rangle \rangle=\langle\langle \boldsymbol{\nu},\Phi\rangle \rangle\quad \forall\, \Phi\in\mathcal{E}_{p,c}([0,\tau]\times\R^d,\R^m).
\end{align*}
With these considerations in mind, we have the following variant of Corollary~\ref{cor:compactness_gYM}.

\begin{corollary}[Compactness for generalized Young measures, unbounded domains]\label{cor:compactness_gYM_general}
    Let $\{\boldsymbol{\nu}^n\}_n\subset {\rm Y}^p([0,\tau]\times\R^d,\R^m)$ be such that
    \begin{equation}\label{eq:compactness_condition_gYM}
        \sup_{n} \Big\{ \int_{[0,\tau]\times\R^d}\int_{\R^m} |z|^p \nu^n_y(\dd z) \dd y + \lambda^n([0,\tau]\times\R^d)\Big\} <\infty
    \end{equation}
    Then there exist $\boldsymbol{\nu}\in {\rm Y}^p(\Omega_\tau,\R^m)$ and a subsequence $\{\boldsymbol{\nu}^{n_k}\}_k$ such that $\boldsymbol{\nu}^{n_k}\to \boldsymbol{\nu}$ in ${\rm Y}^p([0,\tau]\times\R^d,\R^m)$ as $k\to\infty$.
\end{corollary}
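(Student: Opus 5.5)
The plan is to reduce to the bounded-domain result, Corollary~\ref{cor:compactness_gYM}, by exhausting $\R^d$ with balls and then running a diagonal extraction. For $R\in\N^\ast$ let $B_R$ be the open ball of radius $R$, so that $\mathscr{L}^d(\partial B_R)=0$, and set $\Omega^R_\tau:=[0,\tau]\times B_R$. Each $\boldsymbol{\nu}^n$ restricts to an element $\boldsymbol{\nu}^{n,R}\in{\rm Y}^p(\Omega^R_\tau,\R^m)$: keep $\nu_y$ and $\nu^\infty_y$ for $y\in\Omega^R_\tau$, and replace $\lambda^n$ by $\lambda^n\llcorner \Omega^R_\tau$. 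By \eqref{eq:compactness_condition_gYM}, the quantities in Corollary~\ref{cor:compactness_gYM}~iii) are bounded uniformly in $n$ for each $R$. Hence, for $R=1,2,\dots$ successively, we can extract nested subsequences such that $\boldsymbol{\nu}^{n,R}\to\boldsymbol{\mu}^R$ in ${\rm Y}^p(\Omega^R_\tau,\R^m)$. A diagonal subsequence $n_k$ then converges for every $R$ simultaneously.

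Next we would glue the limits. For $R<R'$, any $\Phi\in\mathcal{E}_p(\Omega^{R'}_\tau,\R^m)$ that is compactly supported in $y$ inside $[0,\tau]\times B_R$ restricts to an element of $\mathcal{E}_p(\Omega^R_\tau,\R^m)$. Testing both convergences against such $\Phi$ shows that $\boldsymbol{\mu}^{R'}$ and $\boldsymbol{\mu}^R$ agree on $[0,\tau]\times B_R$ as functionals on integrands compactly supported there. From this one recovers $\nu_y$ a.e. (via the $L^\infty$-disintegration uniqueness in \cite{KriRai2020}), $\lambda$ on the open set, and $\nu^\infty$ $\lambda$-a.e. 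Consistency is only guaranteed on open sets, since the limit $\lambda^R$ may charge $[0,\tau]\times\partial B_R$. Therefore we define $\boldsymbol{\nu}$ on $[0,\tau]\times B_R$ from $\boldsymbol{\mu}^{R+1}$, and the definitions are consistent. This yields a triple $(\nu,\lambda,\nu^\infty)$ on $[0,\tau]\times\R^d$.

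We then check that the triple lies in ${\rm Y}^p$. Testing with $\Psi=\chi(y)|z|^p$, for $\chi\in C_c$ with $0\le\chi\le1$, gives $\langle\langle\boldsymbol{\nu},\Psi\rangle\rangle\le\liminf_k \langle\langle\boldsymbol{\nu}^{n_k},\Psi\rangle\rangle\le C$. Taking the supremum over $\chi$ bounds both $\int\!\int|z|^p\nu_y\,\dd y$ and $\lambda([0,\tau]\times\R^d)$, so iv) and the finiteness of $\lambda$ hold. Finally, every $\Phi\in\mathcal{E}_{p,c}$ has $y$-support in some $[0,\tau]\times B_R$, so $\langle\langle\boldsymbol{\nu}^{n_k},\Phi\rangle\rangle\to\langle\langle\boldsymbol{\nu},\Phi\rangle\rangle$ follows from the convergence on $\Omega^{R+1}_\tau$.

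The main obstacle is the gluing step, specifically identifying the triple from the functional on compactly supported test functions and handling the possible concentration of $\lambda$ on the boundaries. Using the open balls $B_{R+1}$ with test functions compactly supported in $B_R$ sidesteps the boundary issue. An alternative that avoids gluing altogether would be to work directly in the dual of $\mathcal{E}_{p,c}$, viewed as an inductive limit, and apply the representation theorem \cite[Cor.~2.8]{KriRai2020} locally. The diagonal argument above is the route I would try first.
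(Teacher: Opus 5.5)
Your proposal is correct and follows the same route as the paper: restrict to $[0,\tau]\times B_R$, apply Corollary~\ref{cor:compactness_gYM} for each $R$, extract a diagonal subsequence, and use that every $\Phi\in\cE_{p,c}$ is supported in some $[0,\tau]\times B_R$. Two points the paper leaves under ``a standard diagonal argument'' are made explicit in your version: the gluing step, where you identify the limits only on open sets via integrands compactly supported in $y$ (avoiding possible concentration of $\lambda$ on $[0,\tau]\times\partial B_R$), and the check that the glued triple has finite $p$-moment and finite $\lambda$-mass.
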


\begin{proof}
    For any $R>0$, let $\Omega_{\tau,R}:=[0,\tau]\times \{x\in\R^d:|x|<R\}$ and denote by $\boldsymbol{\nu}^{n,R}$ the restriction of $\boldsymbol{\nu}^{n}$ to $\Omega_{\tau,R}$. By assumption \eqref{eq:compactness_condition_gYM}, Point iii) of Corollary~\ref{cor:compactness_gYM} is satisfied, therefore one can extract a (not relabelled for simplicity) subsequence such that $\boldsymbol{\nu}^{n,R}$ converges to some $\boldsymbol{\nu}\in {\rm Y}^p(\Omega_{T,R},\R^m)$ as $n\to\infty$.
    By a standard diagonal argument, one can then extract a common subsequence $n_k$ such that $\boldsymbol{\nu}^{n_k,R}\to \boldsymbol{\nu}$ in ${\rm Y}^p(\Omega_{\tau,R},\R^m)$ for all integers $R\geq 1$, where $\boldsymbol{\nu}\in {\rm Y}^p([0,\tau]\times \R^d,\R^m)$. Since any $\Phi\in\cE_{p,c}([0,\tau]\times \R^d,\R^m)$ can be identified with an element of $\cE_p(\Omega_{\tau,R},\R^m)$, for suitable $R$ large enough, conclusion follows.
\end{proof}

The importance of generalized Young measures and the main motivation for their introduction comes from the following result; for simplicity, we state it in the unbounded case $D=\R^d$, the case of bounded domains $\Omega$ being analogous.

\begin{theorem}\label{thm:gYM_limit_Lp}
    Let $p\in (1,\infty)$ and let $\{u^n\}_n$ be a bounded sequence in $L^p([0,\tau]\times\R^d;\R^m)$. Then there exist $\boldsymbol{\nu}$ and a subsequence $\{u^{n_k}\}_k$ such that
    \begin{equation*}
        \lim_{k\to\infty} \int_{[0,\tau]\times \R^d} \Phi(y,u^{n_k}(y)) \dd y = \langle\langle \boldsymbol{\nu},\Phi \rangle\rangle \quad \forall\, \Phi\in \cE_{p,c}([0,\tau]\times \R^d,\R^m).
    \end{equation*}
\end{theorem}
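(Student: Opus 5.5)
The plan is to embed each $u^n$ into ${\rm Y}^p([0,\tau]\times\R^d,\R^m)$ as an elementary generalized Young measure and then apply Corollary~\ref{cor:compactness_gYM_general}. For each $n$, set
\begin{equation*}
    \boldsymbol{\nu}^n:=\big((\delta_{u^n(y)})_y,\ \lambda^n:=0,\ (\nu^{n,\infty}_y)_y\big),
\end{equation*}
where $\nu^{n,\infty}_y$ is any fixed element of $\cP(\S^{m-1})$; its choice is irrelevant because $\lambda^n=0$. The map $y\mapsto \delta_{u^n(y)}$ is weakly-$\ast$ measurable since $u^n$ is measurable, and
\begin{equation*}
    \int\!\!\int |z|^p\,\delta_{u^n(y)}(\dd z)\,\dd y=\|u^n\|_{L^p}^p<\infty,
\end{equation*}
so $\boldsymbol{\nu}^n\in {\rm Y}^p([0,\tau]\times\R^d,\R^m)$. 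Directly from the definition of the pairing, for every $\Phi\in\cE_{p,c}$ we have
\begin{equation*}
    \langle\langle\boldsymbol{\nu}^n,\Phi\rangle\rangle=\int \Phi(y,u^n(y))\,\dd y .
\end{equation*}
This integral is finite because $|\Phi(y,z)|\le \|\Phi\|(1+|z|)^p$ and $\Phi$ has compact support in $y$.

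Next, the hypothesis $\sup_n\|u^n\|_{L^p}<\infty$ together with $\lambda^n=0$ is exactly condition \eqref{eq:compactness_condition_gYM}. Corollary~\ref{cor:compactness_gYM_general} then yields a subsequence $n_k$ and a limit $\boldsymbol{\nu}\in{\rm Y}^p([0,\tau]\times\R^d,\R^m)$ with $\langle\langle\boldsymbol{\nu}^{n_k},\Phi\rangle\rangle\to\langle\langle\boldsymbol{\nu},\Phi\rangle\rangle$ for all $\Phi\in\cE_{p,c}$. Combining this with the identity above gives the claim.

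The main point to check is that the limit functional really lies in ${\rm Y}^p$, i.e.\ that it splits into an oscillation part $\nu_y$ and a concentration part $(\lambda,\nu^\infty)$. This is already contained in the weak-$\ast$ closedness of ${\rm Y}^p$ (the theorem from \cite[Cor. 2.8]{KriRai2020}), which is used inside Corollary~\ref{cor:compactness_gYM_general} on each ball $\Omega_{\tau,R}$.

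A second subtlety is the diagonal argument. One has to make sure that the limits on the different balls $\Omega_{\tau,R}$ are consistent, so that they glue to a single $\boldsymbol{\nu}$ on $[0,\tau]\times\R^d$ with finite total mass. Consistency follows because integrands supported in $\Omega_{\tau,R}$ are also admissible on $\Omega_{\tau,R'}$ for $R'>R$, so the limit on the larger ball restricts to the limit on the smaller one. Finite total mass follows because the $\Psi=|z|^p$ bounds are uniform in $R$ and hold for all $R$, so the glued measures have bounded $p$-moment and bounded $\lambda$-mass by monotone convergence. I take both facts as granted by the Corollary.
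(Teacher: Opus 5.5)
Your proposal is correct and follows the paper's proof. Like the paper, you embed each $u^n$ as the elementary Young measure $(\delta_{u^n},0,\cdot)$, note that $\langle\langle\boldsymbol{\nu}^n,\Phi\rangle\rangle=\int\Phi(y,u^n(y))\,\dd y$ and that the uniform $L^p$ bound is exactly \eqref{eq:compactness_condition_gYM}, and conclude via Corollary~\ref{cor:compactness_gYM_general}, to which the paper also delegates the gluing and the fact that the limit lies in ${\rm Y}^p$.
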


\begin{proof}
    For each $n$, let $\boldsymbol{\nu}^n:=(\nu^n,0,0)\in {\rm Y}^p(\Omega_{T,R},\R^m)$ given by $\nu^n_y(\dd z)=\delta_{u(y)}(\dd z)$. Then
    \begin{align*}
        & \int_{[0,\tau]\times \R^d} \Phi(y,u^{n}(y)) \dd y = \langle\langle \boldsymbol{\nu}^n,\Phi \rangle\rangle \quad \forall\, n\in\N, \, \Phi\in \cE_{p,c},\\
        & \sup_n \int_{[0,\tau]\times\R^d}\int_{\R^m} |z|^p \nu^n_y(\dd z) \dd y = \sup_n \| u^n\|_{L^p([0,\tau]\times \R^d;\R^m)}^p<\infty.
    \end{align*}
    Therefore the assumptions of Corollary~\ref{cor:compactness_gYM_general} are satisfied and the conclusion follows.
\end{proof}

\section{Known results for classical and weak solutions}\label{sec:recap_solutions}

We recall here several facts that will be needed in the proof of the main results. Throughout this section, the domain $D$ can be indifferently $\R^2$ or $\T^2$.

\subsection{Global classical solutions and their estimates}\label{subsec:classical_solutions}

Let us recall that, for regular initial data (e.g. $u_0\in L^2_\sigma\cap W^{s,p}$ with $s>1+2/p$), the 2D Euler equations \eqref{eq:intro_euler} are globally well-posed; see for instance \cite[Chap. 4]{Lions1996} for an overview. We will need the following known result, concerning the growth of suitable norms of such solutions. In the next statement, $\omega_0=\nabla^\perp\cdot u_0$ is the vorticity of $u_0$ (note that, if $u_0\in L^2_\sigma$ and $\omega_0\equiv 0$, then necessarily $u_0\equiv 0$).

\begin{proposition}[A priori bounds for regular solutions to 2D Euler]\label{prop:global_bounds}
    Let $u_0\in H^s_\sigma$ with $s>2$. Then there exists a global solution $u\in C([0,+\infty);H^s)$ to \eqref{eq:intro_euler}, which satisfies the energy balance
    \begin{equation}\label{eq:standard_energy_conservation}
        \| u_t\|_{L^2}=\| u_0\|_{L^2}\quad\forall\, t\geq 0.
    \end{equation}
    Moreover there exists $C=C(s)>0$ such that, for any $u_0\in H^s_\sigma\setminus\{0\}$, it holds
    \begin{equation}\label{eq:exponential_bound}
        \| \nabla u_t\|_{L^\infty} \leq e^{C(1+\| \omega_0\|_{L^\infty})(1+t)} \log\Big(e + \frac{\| u_0\|_{H^s}}{\|\omega_0\|_{L^\infty}} \Big) \quad\forall \, t\geq 0.
    \end{equation}
\end{proposition}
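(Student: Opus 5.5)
The plan is to treat global existence in $C([0,+\infty);H^s)$ as classical (Wolibner, or \cite[Chap. 4]{Lions1996}), and to spend the effort on the quantitative bound \eqref{eq:exponential_bound}. Energy conservation \eqref{eq:standard_energy_conservation} follows by pairing the equation with $u$: the nonlinear term $\langle (u\cdot\nabla)u,u\rangle$ and the pressure term both vanish by $\nabla\cdot u=0$, which is legitimate since $s>2$ makes $u$ a $C^1$ solution.

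For the gradient bound I work in vorticity form $\partial_t\omega+u\cdot\nabla\omega=0$. Transport by a divergence-free field gives $\|\omega_t\|_{L^\infty}=\|\omega_0\|_{L^\infty}$ and $\|\omega_t\|_{L^2}=\|\omega_0\|_{L^2}$.

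The key tool is the Beale–Kato–Majda / Kozono–Taniuchi logarithmic inequality in the scale-invariant form
\begin{equation*}
\|\nabla u\|_{L^\infty}\lesssim \|\omega\|_{L^2}+\|\omega\|_{L^\infty}\Big(1+\log\Big(e+\frac{\|u\|_{H^s}}{\|\omega\|_{L^\infty}}\Big)\Big),
\end{equation*}
which holds for $s>2$. It is proved by splitting $\nabla u=\nabla\nabla^\perp\Delta^{-1}\omega$ into low, intermediate and high Littlewood–Paley frequencies and optimizing the cutoff. I would normalize so that $\|\omega_0\|_{L^2}$ is controlled by $\|u_0\|_{H^s}$, absorbing it into the logarithm or into the constant.

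Next comes the standard $H^s$ energy estimate. The Kato–Ponce commutator bound gives
\begin{equation*}
\frac{\dd}{\dd t}\|u\|_{H^s}\lesssim \|\nabla u\|_{L^\infty}\|u\|_{H^s}.
\end{equation*}
Set $y(t)=\log(e+\|u_t\|_{H^s}/\|\omega_0\|_{L^\infty})$. Combining the two displays yields $y'\lesssim \|\nabla u\|_{L^\infty}\lesssim (1+\|\omega_0\|_{L^\infty})\,y$. Gronwall then gives
\begin{equation*}
y(t)\leq y(0)\,e^{C(1+\|\omega_0\|_{L^\infty})t}.
\end{equation*}
Inserting this back into the logarithmic inequality gives
\begin{equation*}
\|\nabla u_t\|_{L^\infty}\lesssim (1+\|\omega_0\|_{L^\infty})\,e^{C(1+\|\omega_0\|_{L^\infty})t}\log\Big(e+\frac{\|u_0\|_{H^s}}{\|\omega_0\|_{L^\infty}}\Big).
\end{equation*}
The prefactor $C(1+\|\omega_0\|_{L^\infty})$ is at most $e^{C'(1+\|\omega_0\|_{L^\infty})}$, so enlarging the constant in the exponent to $C(1+\|\omega_0\|_{L^\infty})(1+t)$ produces exactly \eqref{eq:exponential_bound}.

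The main obstacle is getting the logarithmic inequality with the correct homogeneity. The quotient $\|u\|_{H^s}/\|\omega\|_{L^\infty}$ must appear inside the log, with no stray additive constants that would spoil the bound for small $\omega_0$. The low-frequency part must also be handled on $\R^2$ (via $\|\omega\|_{L^2}\leq\|u_0\|_{H^s}$), and I would need to check that all these terms are absorbed into the stated form. On $\T^2$ this step is easier, since the zero mean removes low frequencies.
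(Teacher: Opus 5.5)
Your route is the paper's: Kato--Ponce for $\frac{\dd}{\dd t}\|u_t\|_{H^s}$, a logarithmic inequality for $\|\nabla u_t\|_{L^\infty}$, conservation of $\|\omega_t\|_{L^\infty}$, and Gr\"onwall on $\log(e+\|u_t\|_{H^s}/\|\omega_0\|_{L^\infty})$. The paper uses the squared ratio, which is equivalent. The gap is the logarithmic inequality you state. On $\R^2$ it carries an additive $\|\omega\|_{L^2}$, and your step $y'\lesssim(1+\|\omega_0\|_{L^\infty})\,y$ drops it without justification.

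That term cannot be absorbed as you propose. The bound $\|\omega_0\|_{L^2}\leq\|u_0\|_{H^s}$ does not help, because $\|u_0\|_{H^s}$ enters the target only inside a logarithm, and $\|\omega_0\|_{L^2}$ is not bounded by a constant depending only on $s$. To see this, take $\omega_0=\phi(\mu\,\cdot)$ with $\phi\in C^\infty_c(\R^2)$, $\int\phi=0$ (so $u_0\in L^2$), and let $\mu\to0$. Then $\|\omega_0\|_{L^\infty}$ and $\|\nabla u_0\|_{L^\infty}$ stay fixed and $\|\omega_0\|_{L^2}\sim\mu^{-1}$. Meanwhile $\|u_0\|_{H^s}\sim\mu^{-2}$, so $\log(e+\|u_0\|_{H^s}/\|\omega_0\|_{L^\infty})\sim\log(1/\mu)$. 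With your inequality you only get $y'\lesssim\|\omega_0\|_{L^2}+\|\omega_0\|_{L^\infty}(1+y)$. That yields a bound with $\|\omega_0\|_{L^2}$ outside the logarithm, which is strictly weaker than the claimed estimate with $C=C(s)$.

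What you need is the fully homogeneous inequality the paper takes from Kozono--Ogawa--Taniuchi:
\[
\|\nabla u\|_{L^\infty}\lesssim\|\omega\|_{L^\infty}\log\Big(e+\frac{\|u\|_{H^s}}{\|\omega\|_{L^\infty}}\Big).
\]
One way to get it is to apply $\|f\|_{L^\infty}\lesssim 1+\|f\|_{\mathrm{BMO}}\log(e+\|f\|_{H^{s-1}})$ to $f=\nabla u/\|\omega\|_{L^\infty}$. Then use $\|\nabla u\|_{\mathrm{BMO}}\lesssim\|\omega\|_{L^\infty}$ and the monotonicity of $x\mapsto x\log(e+A/x)$.

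In your Littlewood--Paley proof, the repair is to treat low frequencies logarithmically as well. Bound $\sum_{-M\leq j<0}\|\Delta_j\nabla u\|_{L^\infty}\lesssim M\|\omega\|_{L^\infty}$ and $\|\dot S_{-M}\nabla u\|_{L^\infty}\lesssim 2^{-M}\|\omega\|_{L^2}$. Then choose $2^M\approx e+\|\omega\|_{L^2}/\|\omega\|_{L^\infty}$, so $\|\omega\|_{L^2}\leq\|u\|_{H^s}$ appears only inside the log.

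Alternatively, the Euler scaling $u\mapsto Lu(t,\cdot/L)$ keeps $\|\omega\|_{L^\infty}$ and $\|\nabla u\|_{L^\infty}$ fixed and multiplies $\|\omega\|_{L^2}$ by $L$. This lets you reduce to $\|\omega_0\|_{L^2}\leq\|\omega_0\|_{L^\infty}$, at the cost of a factor $(\|u_0\|_{H^s}/\|\omega_0\|_{L^\infty})^{s-2}$ in the $H^s$ norm, which the logarithm absorbs.

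With either fix, $y'\lesssim\|\omega_0\|_{L^\infty}\,y$ and the rest of your argument goes through as in the paper. The additive $1$ in your $(1+\log(\cdots))$ is harmless since $\log(e+x)\geq1$, and on $\T^2$ the issue does not arise, as you note.
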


While stronger norms like $\| u_t\|_{H^s}$ can only satisfy in general double exponential bounds (cf.~\cite{KisSve2014}), the exponential growth of the velocity gradient given in \eqref{eq:exponential_bound} was first observed in~\cite{daVeiga1984} and later~\cite{Koch2002}. For completeness, we give a short proof in Appendix~\ref{app:tech-lem}.

The next classical result, due to Leray, can be found e.g. in \cite[Thm 3.1]{Lions1996}.

\begin{proposition}[Well-posedness of 2D Navier--Stokes]\label{prop:wellposedness_2D_NS}
    For any $\nu>0$ and any $u_0\in L^2_\sigma$, there exists a unique solution $u^\nu\in C([0,+\infty);L^2_\sigma)\cap L^2([0,+\infty);\dot H^1)$ to
    \begin{equation*}
        \begin{cases}
            \partial_t u^\nu+ (u^\nu\cdot\nabla)u^\nu + \nabla p^\nu = \nu \Delta u^\nu,\\
        \nabla\cdot u^\nu=0,\quad u^\nu\vert_{t=0}=u_0,
        \end{cases}
    \end{equation*}
    which moreover satisfies the energy balance
    \begin{equation}\label{eq:energy_balance_NS}
        \frac{1}{2}\| u^\nu_t\|_{L^2}^2 + \nu \int_0^t \| \nabla u^\nu_r\|_{L^2}^2 \dd r = \frac{1}{2}\| u_0\|_{L^2}^2\quad\forall\,
        t\geq 0.
    \end{equation}
\end{proposition}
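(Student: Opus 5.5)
The plan is the classical Leray--Lions argument: construct a solution by Galerkin approximation, pass to the limit by compactness, and then use the two-dimensional Ladyzhenskaya inequality. That inequality gives both uniqueness and the energy \emph{equality}, rather than just an inequality.

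For existence on $\T^2$, I would project onto the Fourier modes $|k|\leq N$ with the projection $\Pi_N$ (the same one appearing in Corollary~\ref{cor:intro_galerkin}). This gives a finite-dimensional ODE
\begin{equation*}
\partial_t u^N+\Pi_N[(u^N\cdot\nabla)u^N]=\nu\Delta u^N,\qquad u^N_0=\Pi_N u_0 .
\end{equation*}
Since $\langle (v\cdot\nabla)w,w\rangle=0$ for divergence-free $v$, the ODE has the exact energy identity
\begin{equation*}
\tfrac12\|u^N_t\|_{L^2}^2+\nu\int_0^t\|\nabla u^N_r\|_{L^2}^2\,\dd r=\tfrac12\|\Pi_N u_0\|_{L^2}^2 .
\end{equation*}
In particular the ODE solutions are global. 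This identity gives uniform bounds in $L^\infty L^2\cap L^2\dot H^1$. From the equation, $\partial_t u^N$ is bounded in $L^2([0,\tau];H^{-1})$, because $\|u\otimes u\|_{L^2}\lesssim\|u\|_{L^2}\|\nabla u\|_{L^2}$ in 2D by Ladyzhenskaya. Aubin--Lions then gives strong compactness in $L^2([0,\tau];L^2)$ (locally, in the case of $\R^2$). This is enough to pass to the limit in the nonlinear term and obtain a weak solution in $L^\infty L^2_\sigma\cap L^2\dot H^1$. On $\R^2$ I would run the same scheme with a Friedrichs mollifier truncation, or with smooth approximating data and a diagonal argument in the radius.

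Next I would upgrade the weak solution to $C([0,+\infty);L^2_\sigma)$ with the energy balance \eqref{eq:energy_balance_NS}. By Ladyzhenskaya, $u\in L^4([0,\tau];L^4)$, so the equation gives $\partial_t u\in L^2([0,\tau];H^{-1})$. Together with $u\in L^2([0,\tau];H^1)$, the Lions--Magenes lemma yields $u\in C([0,\tau];L^2)$. It also makes $t\mapsto\|u_t\|_{L^2}^2$ absolutely continuous with derivative $2\langle\partial_t u,u\rangle$. The nonlinear contribution $\langle (u\cdot\nabla)u,u\rangle$ is well defined and vanishes, since $u\in L^4$ and $\nabla u\in L^2$. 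This gives \eqref{eq:energy_balance_NS} exactly.

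For uniqueness, I would take two solutions $u,v$ in the stated class and set $w=u-v$. The same Lions--Magenes justification gives
\begin{equation*}
\tfrac12\tfrac{\dd}{\dd t}\|w\|_{L^2}^2+\nu\|\nabla w\|_{L^2}^2=-\langle (w\cdot\nabla)v,w\rangle .
\end{equation*}
I would bound the right-hand side by $\|\nabla v\|_{L^2}\|w\|_{L^4}^2\lesssim\|\nabla v\|_{L^2}\|w\|_{L^2}\|\nabla w\|_{L^2}$. Young's inequality then gives
\begin{equation*}
\tfrac{\dd}{\dd t}\|w\|_{L^2}^2\lesssim\nu^{-1}\|\nabla v\|_{L^2}^2\|w\|_{L^2}^2 .
\end{equation*}
Since $\|\nabla v\|_{L^2}^2\in L^1_t$, Gr\"onwall forces $w\equiv0$.

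The main obstacle is the time-regularity step: one must check that every solution in the class automatically satisfies $\partial_t u\in L^2H^{-1}$, which is exactly where the two-dimensional Ladyzhenskaya estimate is essential. Once that holds, the justification of the energy identities for both existence and uniqueness is routine.
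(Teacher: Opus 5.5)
Your proposal is correct. It is the classical Leray argument: Galerkin or Friedrichs approximation with Aubin--Lions compactness, then the 2D Ladyzhenskaya inequality to get $\partial_t u\in L^2H^{-1}$, hence $C([0,\tau];L^2)$ and the exact energy balance via Lions--Magenes, and uniqueness by Gr\"onwall. The paper gives no proof of its own and simply cites this result from \cite[Thm 3.1]{Lions1996}, whose proof follows essentially these lines.
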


\subsection{Solution concepts and weak-strong uniqueness}\label{subsec:weak_solutions}

The statements collected here are valid in any dimension $d\geq 2$, so that $D$ can be either $\T^d$ or $\R^d$.

The notion of dissipative solution to~\eqref{eq:intro_euler} was first introduced in \cite[Sec. 4.4]{Lions1996}.
In the next definition, for any sufficiently regular function $v:[0,\tau]\times D\to\R^d$, we adopt the notations
\begin{align*}
    \mathscr{D}(v):=Sym(\nabla v), \quad E(v):=-\partial_t v - \Pi ((v\cdot\nabla)v).
\end{align*}
We denote by $\mathscr{D}(v)^-$ the negative part of $\mathscr{D}(v)$ in the sense of matrices, which is well-defined since $\mathscr{D}(v)$ is symmetric.

\begin{definition}[Dissipative solution]\label{defn:dissipative_solution}
    We say that $u\in C_w([0,\tau];L^2_\sigma)$ is a dissipative solution to~\eqref{eq:intro_euler} on $[0,\tau]$ if $u\vert_{t=0}= u_0$ and
    \begin{equation}\label{eq:defn_dissipative_solution}\begin{split}
        \| u_t-v_t\|_{L^2}^2
        &  \leq \exp\left(2 \int_0^t \| \mathscr{D}(v_r)^-\|_{L^\infty} \dd r\right) \| u_0-v_0\|_{L^2}^2\\
        & \quad + 2 \int_0^t \exp\left(2 \int_s^t \| \mathscr{D}(v_r)^-\|_{L^\infty} \dd r\right) \int_D E(v_s)(x)\cdot (u_s(x)-v_s(x)) \dd x\, \dd s
    \end{split}\end{equation}
    for all $v\in C([0,\tau];L^2_\sigma)$ such that $\mathscr{D}(v)\in L^1([0,\tau];L^\infty)$ and $E(v)\in L^2([0,\tau];L^2)$.
\end{definition}

An immediate consequence of the definition is the following: if $u$ is a dissipative solution to \eqref{eq:intro_euler} and $v$ is another sufficiently regular solution to the Euler equations, so that $E(v)\equiv 0$, then
\begin{equation}\label{eq:consequence_dissipative_solution}
    \sup_{t\in [0,\tau]} \| u_t-v_t\|_{L^2} \leq \exp\left(\int_0^\tau \| \mathscr{D}(v_t) \|_{L^\infty} \dd t\right) \| u_0-v_0\|_{L^2}
    \leq \exp\left(\int_0^\tau \| \nabla v_t \|_{L^\infty} \dd t\right) \| u_0-v_0\|_{L^2}.
\end{equation}
Indeed, the definition was originally proposed in~\cite{Lions1996} exactly because it implies a weak-strong uniqueness result by applying \eqref{eq:consequence_dissipative_solution} with $u_0=v_0$.

We now discuss measure-valued solutions to the Euler equations, in the sense of generalized Young measure $\boldsymbol{\nu}=(\nu,\nu^\infty,\lambda)\in {\rm Y}^2([0,\tau]\times D,\R^d)$ discussed in Section~\ref{sec:generalized_Young}; for simplicity, we will write $\nu_{t,x}$ instead of $\nu_{(t,x)}$. Recall that the barycenter $\bar\nu$ of $\boldsymbol{\nu}$ is defined in \eqref{eq:defn_barycenter}.
The next definition is taken from~\cite{BDLS2011}.

\begin{definition}[Admissible measure-valued solution]\label{defn:measure_valued_solution}
    We say that $\boldsymbol{\nu}=(\nu,\nu^\infty,\lambda)\in {\rm Y}^2([0,\tau]\times D,\R^2)$ is an admissible measure-valued solution to \eqref{eq:intro_euler} on $[0,\tau]$ if the following hold:
    \begin{itemize}
        \item[i)] The barycenter $\bar\nu$ belongs to $C_w([0,\tau];L^2_\sigma)$.
        \item[ii)] For any $\phi\in C^\infty_c([0,\tau)\times D;\R^d)$ with $\nabla\cdot \phi=0$, it holds that
        \begin{equation}\label{eq:measure_valued_tested}
            \int_{[0,\tau]\times D} \partial_t \phi \cdot \bar\nu + \nabla \phi : \langle \nu, z\otimes z\rangle \dd t \dd x + \int_{[0,\tau]\times D} \nabla\phi : \langle \nu^\infty, \tilde z\otimes \tilde z \rangle \lambda (\dd t,\dd x) = - \int_D \phi_0(x) u_0(x) \dd x.
        \end{equation}
        \item[iii)] For any $\varphi\in C^0_c(D)$ and $\chi\in C([0,\tau])$, it holds that
        \begin{equation}\label{eq:measure_valued_condition_energy}
            \int_{[0,\tau]\times D} \varphi(x)\chi(t) \langle \nu_{t,x}, |z|^2 \rangle \dd t \dd x + \int_{[0,\tau]\times D} \varphi(x)\chi(t) \lambda(\dd t, {\mathrm d} x) \leq \| \varphi\|_{L^\infty(D)} \| \chi\|_{L^1([0,\tau])} \| u_0\|_{L^2}^2.
        \end{equation}
    \end{itemize}
\end{definition}

\begin{remark}\label{rem:energy_measure_valued_solutions}
    As argued in \cite[pp. 354-355]{BDLS2011}, condition \eqref{eq:measure_valued_condition_energy} is equivalent to requiring that
\begin{equation}\label{eq:decomposition_lambda}
    \lambda(\dd t,\dd x)= \lambda_t(\dd x)\otimes \dd t,\quad \text{where } t\mapsto \lambda_t \text{ is a measurable $\cM_+(D)$-valued function,}
\end{equation}
    and that the following {\em generalized energy inequality} holds:
\begin{equation}\label{eq:admissibility_condition}
    \int_D \langle \nu_{t,x}, |z|^2 \rangle\, \dd x + \lambda_t(D) \leq \| u_0\|_{L^2}^2 \quad\text{ for Lebesgue a.e. }t\in [0,\tau].
\end{equation}
    Technically speaking,~\cite{BDLS2011} does not require $\bar\nu$ to belong to $C_w([0,\tau];L^2_\sigma)$, only $\bar\nu\in L^2([0,\tau];L^2_\sigma)$; however, combining \eqref{eq:admissibility_condition}, \eqref{eq:measure_valued_tested}, and standard arguments, it is easy to deduce that $\bar\nu\in L^\infty([0,\tau];L^2_\sigma)$ and that it can be redefined on a set of negligible times so that $\bar\nu\in C_w([0,\tau];L^2_\sigma)$ as well.
    In fact, a more quantitative estimate holds: for any $s<t$ and any smooth $\varphi$, one has
    \begin{align*}
        |\langle \bar\nu_t - \bar\nu_s,\varphi\rangle|
        & = \left|\int_{[s,t]\times D} \nabla\varphi(x): \langle \nu_{r,x},z\otimes z \rangle \dd x \dd r + \int_{[s,t]\times D} \nabla\varphi(x): \langle \nu^\infty_{r,x},\tilde z\otimes\tilde z\rangle \lambda_r (\dd x) \dd r\right|\\
        & \leq \| \nabla\varphi\|_{C^0} \int_{[s,t]} \left[\int \langle \nu_{r,x}, |z|^2\rangle \dd x + \lambda_r (D)\right] \dd r\\
        & \leq \| \nabla\varphi\|_{C^0} |t-s| \| u_0\|_{L^2}^2.
    \end{align*}
    By Sobolev embeddings and duality arguments, one concludes from the above that for any $\delta>0$ the barycenter $\bar\nu$ satisfies (for instance) the estimate
    \begin{equation}\label{eq:weak_continuity_measure_valued_solutions}
        \| \bar\nu_t -\bar\nu_s\|_{H^{-1-\delta}}\lesssim_\delta |t-s| \| u_0\|_{L^2}^2\quad\forall\,
        s,t\in [0,\tau].
    \end{equation}
\end{remark}

The basic connection between Definition~\ref{defn:measure_valued_solution} and the more standard concept of weak solution is the following: if $u\in C_w([0,\tau];L^2_\sigma)$ is a weak solution to the Euler equations, then defining $\boldsymbol{\nu}$ by setting $\nu_{t,x}:=\delta_{u_t(x)}$ and $\lambda\equiv 0$, conditions \textit{i)-ii)} are satisfied; \textit{iii)} then becomes equivalent to the standard energy inequality $\| u_t\|_{L^2} \leq \| u_0\|_{L^2}$ for all $t\in [0,\tau]$.
Let us also mention the remarkable result from~\cite{SzeWie2012}: \textit{any} admissible measure-valued solution $\boldsymbol{\nu}$ on $[0,\tau]$ can be constructed as a limit of weak solutions $u^n\in C_w([0,\tau];L^2_\sigma)$ satisfying the eneergy inequality, by considering $\boldsymbol{\nu}^n=(\delta_{u^n},0,0)$.

The next statement is taken from \cite[Prop. 2]{BDLS2011}; the statement therein is in $\R^d$ but the proof readapts verbatim to $\T^d$ as well. It is closely connected to the weak-strong uniqueness property of the Euler equations, cf. \cite[Thm. 2]{BDLS2011}.

\begin{proposition}[Relation between dissipative and measure-valued solutions]\label{prop:weak_strong_uniqueness}
    Let $\boldsymbol{\nu}\in {\rm Y}^2([0,\tau]\times D,\R^d)$ be an admissible measure-valued solution to \eqref{eq:intro_euler}, in the sense of Definition~\ref{defn:measure_valued_solution}.
    Then its barycenter $u_t(x):= \langle \nu_{t,x},z\rangle$ is a dissipative solution to \eqref{eq:intro_euler}, in the sense of Definition~\ref{defn:dissipative_solution}.
\end{proposition}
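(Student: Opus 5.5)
The plan is the relative-energy argument of \cite{BDLS2011}: derive a differential inequality for the relative energy and close it with Gr\"onwall. Fix a test field $v\in C([0,\tau];L^2_\sigma)$ with $\mathscr{D}(v)\in L^1([0,\tau];L^\infty)$ and $E(v)\in L^2([0,\tau];L^2)$. Arguing first for smooth $v$ and approximating afterwards, define the relative energy
\begin{equation*}
    \mathcal{E}_t:=\int_D \langle \nu_{t,x},|z-v_t(x)|^2\rangle\,\dd x+\lambda_t(D),
\end{equation*}
with $\lambda_t$ given by the disintegration \eqref{eq:decomposition_lambda}. Expanding the square and using Jensen gives
\begin{equation*}
    \| u_t-v_t\|_{L^2}^2\leq \mathcal{E}_t = \Big(\int_D\langle\nu_{t,x},|z|^2\rangle\,\dd x+\lambda_t(D)\Big)-2\langle u_t,v_t\rangle+\|v_t\|_{L^2}^2 .
\end{equation*}
It therefore suffices to prove the bound \eqref{eq:defn_dissipative_solution} for $\mathcal{E}_t$ in place of $\|u_t-v_t\|_{L^2}^2$, at a.e.\ $t$ and then at every $t$.

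Each term is controlled separately. The first bracket is at most $\|u_0\|_{L^2}^2$ by the energy inequality \eqref{eq:admissibility_condition}. For the cross term, test \eqref{eq:measure_valued_tested} with $\phi(s,x)=\chi(s)v_s(x)$, where $\chi$ is a cutoff approximating $\mathbf 1_{[0,t]}$. This is admissible after mollifying $v$, since $v$ is divergence free. Using the weak continuity of $u$ from Remark~\ref{rem:energy_measure_valued_solutions}, this yields
\begin{equation*}
    \langle u_t,v_t\rangle-\langle u_0,v_0\rangle=\int_0^t\!\!\int_D \Big[\partial_s v\cdot u+\nabla v:\langle\nu,z\otimes z\rangle\Big]\dd x\,\dd s+\int_0^t\!\!\int_D\nabla v:\langle\nu^\infty,\tilde z\otimes\tilde z\rangle\,\lambda_s(\dd x)\,\dd s .
\end{equation*}
For the last term, $\tfrac12\frac{\dd}{\dd s}\|v_s\|_{L^2}^2=\langle \partial_s v,v\rangle$, and $\langle(v\cdot\nabla)v,v\rangle=0$ by incompressibility. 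One then substitutes $\partial_s v=-E(v)-\Pi((v\cdot\nabla)v)$, where $\Pi$ is harmless when paired with divergence-free fields.

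The key algebraic step is regrouping the quadratic terms as
\begin{equation*}
    \int_D\nabla v:\langle \nu,(z-v)\otimes(z-v)\rangle\,\dd x .
\end{equation*}
The leftover cross terms $\nabla v:(u\otimes v+v\otimes u)$ and $(v\cdot\nabla)v\cdot u$ cancel against each other after integrating by parts, using $\nabla\cdot u=\nabla\cdot v=0$. Since $\nabla v:w\otimes w=\mathscr{D}(v):w\otimes w\geq -\|\mathscr{D}(v)^-\|_{L^\infty}|w|^2$, and likewise for the concentration part $\tilde z\otimes\tilde z$ weighted by $\lambda$, this gives
\begin{equation*}
    \mathcal{E}_t\leq \mathcal{E}_0+2\int_0^t\|\mathscr{D}(v_s)^-\|_{L^\infty}\,\mathcal{E}_s\,\dd s+2\int_0^t\!\!\int_D E(v_s)\cdot(u_s-v_s)\,\dd x\,\dd s ,
\end{equation*}
with $\mathcal{E}_0=\|u_0-v_0\|_{L^2}^2$. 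This holds for a.e.\ $t$. The integral form of Gr\"onwall's lemma then yields the bound \eqref{eq:defn_dissipative_solution} for $\mathcal{E}_t$, hence for $\|u_t-v_t\|_{L^2}^2$.

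The main obstacle is technical rather than structural. First, the identity holds only for a.e.\ $t$, so one must pass to every $t$ using $u\in C_w([0,\tau];L^2_\sigma)$ and weak lower semicontinuity of the norm. Second, the smooth $v$ must be approximated by general ones of the stated regularity (mollification in space and time, plus cutoffs on $\R^d$). Here $\mathscr{D}(v)\in L^1L^\infty$ is exactly what controls the quadratic term uniformly, and $E(v)\in L^2L^2$ gives convergence of the source term. On $\T^d$ the same proof goes through verbatim.
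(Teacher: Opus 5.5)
Your proposal is correct and follows the same route as the paper, which does not reprove the statement but defers to \cite[Prop.~2]{BDLS2011}. That proof is precisely this relative-energy argument: test the measure-valued equation with a regularization of $v$, regroup the quadratic and concentration terms into $-2\int_D \mathscr{D}(v):\langle \nu_{t,x},(z-v)\otimes(z-v)\rangle\,\dd x-2\int_D\mathscr{D}(v):\langle\nu^\infty_{t,x},\tilde z\otimes\tilde z\rangle\,\lambda_t(\dd x)$ (mind the minus sign, which your write-up leaves implicit), bound this by $2\|\mathscr{D}(v_t)^-\|_{L^\infty}\mathcal{E}_t$, and close with Gr\"onwall. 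The one place where your sketch is quick is the density step: convergence of the source term does not follow from $E(v)\in L^2([0,\tau];L^2)$ alone, since $E(v^\varepsilon)-E(v)\ast\rho_\varepsilon$ is the commutator $\Pi\nabla\cdot\big((v\otimes v)\ast\rho_\varepsilon-v^\varepsilon\otimes v^\varepsilon\big)$ (with $\rho_\varepsilon$ the mollifier), which must be shown to vanish, e.g.\ in $L^1([0,\tau];L^2)$; this is immediate when $\nabla v\in L^1([0,\tau];L^\infty)$, and that case is all the main theorems use, since there $v$ is always an $H^3$-valued Euler solution.
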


\section{Proof of the main results}\label{sec:proofs}

With the above preparations, we are now ready to present the main results of the paper.

\subsection{Proof of Theorem~\ref{thm:intro_main}}\label{subsec:proof_thm1}

\begin{proof}[Proof of Theorem~\ref{thm:intro_main}]
    Let $\{\varphi_n\}_n\subset H^3_\sigma\setminus\{0\}$ be a dense subset of $L^2_\sigma$ and define
    \begin{align*}
        r_n(t):= \log\Big(e + \frac{\| \varphi_n\|_{H^3}}{\|\nabla^\perp \cdot \varphi_n\|_{L^\infty}} \Big) \exp\Big( C(1+\| \nabla^\perp \cdot \varphi_n\|_{L^\infty})(1+t)\Big),
    \end{align*}
    where $C$ is the same constant appearing in \eqref{eq:exponential_bound} (for $s=3$).
    For any $k,T\in\N^\ast$, consider
    \begin{equation}\label{eq:defn_residual}\begin{split}
        &\mathcal{G}_{k,T} :=\bigcup_{n\in \N} \left\{u_0\in L^2_\sigma: \|u_0-\varphi_n\|_{L^2} < \frac{1}{k} e^{-T r_n(T) } \right\},\\
        &\cG_{T}:= \bigcap_{k=1}^{+\infty} \cG_{k,T}, \quad
        \cG:= \bigcap_{T=1}^{+\infty} \cG_{T}.
    \end{split}\end{equation}
    It is easy to see from the definition that $\cG_{k,T}$ is an open, dense set in $L^2_\sigma$; therefore both $\cG_T$ and $\cG$ are residual in $L^2_\sigma$.
    We will show that, for any $u_0\in \cG_T$, all the properties stated in Theorem~\ref{thm:intro_main} hold on $[0,T]$; the same conclusion on $[0,+\infty)$ then follows for any $u_0\in \cG$.
    We divide the proof in a few steps.

    \textit{Step 1: Existence and energy conservation.} Let $u_0\in \cG_{T}$, then there exists a sequence $(\varphi_{n_k})_{k\in\N} \subset H^3_\sigma\setminus\{0\}$ such that
    \begin{equation}\label{eq:proof_eq1}
        \| u_0 - \varphi_{n_k}\|_{L^2} < \frac{1}{k} e^{-T r_{n_k}(T)}\quad\forall\,
        k\in\N^\ast.
    \end{equation}
    Denote by $u^k$ the unique solution to 2D Euler starting from $\varphi_{n_k}$; then for any $k_1,k_2\geq k_0$, applying \eqref{eq:consequence_dissipative_solution}, we have that
    \begin{align*}
        \sup_{t\in [0,T]} \|u^{k_1}_t - u^{k_2}_t\|_{L^2}
        & \leq \exp\left( \min_{i=1,2} \int_0^T \| \nabla u^{k_i}_s\|_{L^\infty} \dd s\right) \|u^{k_1}_0 - u^{k_2}_0\|_{L^2}\\
        & \leq \exp\left( \min_{i=1,2} T r_{n_{k^i}}(T) \right) (\|\varphi_{n_{k_1}} -u_0\|_{L^2} + \|u_0 - \varphi_{n_{k_2}}\|_{L^2})\\
        & \leq \exp\left( T r_{n_{k^1}}(T) \right) \|\varphi_{n_{k_1}} -u_0\|_{L^2} +\exp\left( T r_{n_{k^2}}(T) \right) \|u_0 - \varphi_{n_{k_2}}\|_{L^2}\\
        & \leq \frac{1}{k_1}+\frac{1}{k_2} \leq \frac{2}{k_0};
    \end{align*}
    where we used \eqref{eq:proof_eq1}.
    It follows that $(u^k)_k$ is a Cauchy sequence in $C([0,T];L^2_\sigma)$ and therefore converges to some limit $u$ therein. Combining this with the fact that $\varphi_{n_k}\to u_0$ in $L^2_\sigma$ due to \eqref{eq:proof_eq1}, and the energy equality \eqref{eq:standard_energy_conservation}, it's easy to deduce that the limit $u\in C([0,T];L^2_\sigma)$ is a weak solution to \eqref{eq:intro_euler} starting from $u_0$ and satisfying the energy equality
    \begin{equation}\label{eq:energy_equality_proof}
        \| u_t\|_{L^2}= \| u_0\|_{L^2}\quad\forall\, t\in [0,T].
    \end{equation}

    \textit{Step 2: Uniqueness among dissipative solutions.}
    Fix $\tau\in [0,T]$ and let $\tilde u$ be another dissipative solution to \eqref{eq:intro_euler} on $[0,\tau]$, in the sense of Definition~\ref{defn:dissipative_solution}, with initial condition $u_0\in \cG_{T}$. Then, by applying \eqref{eq:consequence_dissipative_solution} with $v=u^k$, where $(u^k)_k$ is the same sequence as in Step 1, one finds
    \begin{equation*}
        \sup_{t\in [0,\tau]} \| u^k_t - \tilde u_t\|_{L^2} \leq e^{T r_{n_k}(T)} \| \varphi_{n_k}-u_0\|_{L^2} \leq \frac{1}{k}\quad\forall\, k\in\N^\ast;
    \end{equation*}
    therefore $u^k\to \tilde u$ in $C([0,\tau];L^2_\sigma)$ and $\tilde u = u$ on $[0,\tau]$.

    \textit{Step 3: Uniqueness among admissible measure-valued solutions.}
    Let $\boldsymbol{\nu}=(\nu,\nu^\infty,\lambda)$ be an admissible solution to \eqref{eq:intro_euler} on $[0,\tau]$, with initial condition $u_0\in \cG_{T}$.
    By Proposition~\ref{prop:weak_strong_uniqueness}, $\tilde u_t(x):=\langle \nu_{t,x},z\rangle$ is a dissipative solution, so by Step 2 it coincides with $u$, which satisfies \eqref{eq:energy_equality_proof}. Therefore by Jensen's inequality and the admissibility condition \eqref{eq:admissibility_condition}, for Lebesgue a.e. $t\in [0,\tau]$ we have
    \begin{align*}
        \| u_0\|_{L^2}^2 = \| u_t\|_{L^2}^2
        =\int_D |\langle \nu_{t,x},z\rangle|^2 \dd x
        \leq \int_D \langle \nu_{t,x},|z|^2\rangle \dd x
        \leq \int_D \langle \nu_{t,x},|z|^2\rangle \dd x + \lambda_t(D) \leq \| u_0\|_{L^2}^2.
    \end{align*}
    It follows that all inequalities must be equalities and so in particular that the variance of $\nu_{t,x}$ (respectively $\lambda_t(D)$) must be zero for Lebesgue a.e. $(t,x)$ (respectively Lebesgue a.e. $t$).
    Overall this implies that $\lambda\equiv 0$ and $\nu_{t,x}=\delta_{\langle \nu_{t,x},z\rangle}=\delta_{u_t(x)}$, so that $\boldsymbol{\nu}$ is uniquely determined by $u$.

    \textit{Step 4: Vanishing viscosity approximations.}
    Consider now sequences $(u^n_0)_n$ such that $u^n_0\to u_0$ in $L^2_\sigma$, $\nu_n\to 0^+$ and let $u^n$ denote the unique solutions to \eqref{eq:intro_NS} (in the sense of Proposition~\ref{prop:wellposedness_2D_NS}).
    These solutions satisfy \eqref{eq:energy_balance_NS}; by Sobolev embeddings, it's then easy to see that
    \begin{align*}
        \| \partial_t u^n_t\|_{H^{-3}}
        & \leq \| \nabla\cdot(u^n_t\otimes u^n_t)\|_{H^{-3}} + \nu_n \| \Delta u^n_t\|_{H^{-3}}\\
        & \lesssim \| u^n_t\|_{L^2}^2 + \nu_n \| u^n_t\|_{L^2}
        \lesssim \sup_n \| u^n_0\|_{L^2}^2 + \sup_n (\nu_n)^2.
    \end{align*}
    In particular, for any fixed $T\in (0,+\infty)$, $(u^n)_n$ is bounded in $L^\infty([0,T];L^2_\sigma)\cap W^{1,\infty}([0,T];H^{-3})$ and thus precompact w.r.t. weak convergence in $C_w([0,T];L^2_\sigma)$.
    On the other hand, thanks to the energy balance \eqref{eq:energy_balance_NS} and Corollary~\ref{cor:compactness_gYM_general}, the sequence $\boldsymbol{\nu^n}:=(\delta_{u^n},0,0)$ is precompact in ${\rm Y}^2([0,T]\times D,\R^2)$.
    We can therefore extract a (not relabelled) subsequence and find some $\tilde u\in C_w([0,T];L^2_\sigma)$, $\boldsymbol{\nu}\in {\rm Y}^2([0,T]\times D,\R^2)$ such that $u^n\to \tilde u$ in $C_w([0,T];L^2_\sigma)$, $\boldsymbol{\nu}^n\to \boldsymbol{\nu}$ in ${\rm Y}^2([0,T]\times D,\R^2)$.
    By Lemma~\ref{lem:barycenter_weak_topology}, $u=\bar \nu$; arguing as in \cite[Prop. 1]{BDLS2011}, $\boldsymbol{\nu}$ is an admissible measure-valued solution to \eqref{eq:intro_euler}, and so by Step 3 it holds $\boldsymbol{\nu}=(\delta_u,0,0)$, $u=\tilde u$.
    As the argument holds for any subsequence we can extract, we deduce that the whole sequence $(u^n)_n$ converges to $u$ in $C_w([0,T];L^2_\sigma)$.
    Recall that $u$ satisfies \eqref{eq:energy_equality_proof}, $u^n_0\to u$ in $L^2_\sigma$ and \eqref{eq:energy_balance_NS} holds; therefore for any sequence $(t_n)_n\subset [0,T]$ such that $t_n\to t$, we find
    \begin{equation}\label{eq:proof_step4}
        \limsup_{n\to\infty} \| u^{\nu_n}_{t_n}\|_{L^2} \leq \limsup_{n\to\infty} \| u^n_0\|_{L^2} = \| u_0\|_{L^2} = \| u_t\|_{L^2}.
    \end{equation}
    Therefore all the assumptions of Lemma~\ref{lem:strong_convergence} are satisfied and we conclude that $u^n\to u$ strongly in $C([0,T];L^2_\sigma)$. Reinserting this fact in \eqref{eq:energy_balance_NS}, it follows that
    \begin{equation*}
        \lim_{n\to\infty} \nu_n \int_0^T \| \nabla u^n_t\|_{L^2}^2 \dd t
        = \lim_{n\to\infty} \frac{1}{2} \big[\| u^n_0\|_{L^2}^2 - \| u^n_T\|_{L^2}^2\big]
        = \frac{1}{2} \big[\| u_0\|_{L^2}^2 - \| u_T\|_{L^2}^2\big] =0.
    \end{equation*}
    As the argument holds for any $T\in (0,+\infty)$, this concludes the verification of \eqref{eq:intro_vanishing_viscosity}.
\end{proof}

\subsection{Proof of Theorem~\ref{thm:intro_passive_scalars}}\label{subsec:proof_thm2}

To prove Theorem~\ref{thm:intro_passive_scalars}, we need to slightly modify the definitions of $\cG_{k,T}$, $\cG_T$ and $\cG$ given in \eqref{eq:defn_residual} by considering instead
\begin{equation}\label{eq:defn_residual_v2}\begin{split}
        & \tilde{\mathcal{G}}_{k,T} :=\bigcup_{n\in \N} \left\{u_0\in L^2_\sigma: \|u_0-\varphi_n\|_{L^2} < \frac{1}{k} e^{-2T r_n(T) } \right\},\\
        &\tilde\cG_{T}:= \bigcap_{k=1}^{+\infty} \tilde\cG_{k,T}, \quad
        \tilde\cG:= \bigcap_{T=1}^{+\infty} \tilde\cG_{T}.
\end{split}\end{equation}
It's easy to see that $\tilde\cG$ is still residual in $L^2_\sigma$ and for $u_0\in \tilde\cG$ all the conclusions from Theorem~\ref{thm:intro_main} still apply.
In particular, arguing as in the proof therein, 
in this case we can deduce that for any $u_0\in \cG_T$ there exists a sequence of smooth initial conditions $u^k_0$ such that, denoting by $u^k$ the corresponding smooth solutions to \eqref{eq:intro_euler}, one has (writing $r_k$ instead of $r_{n_k}$ for simplicity)
\begin{equation}\label{eq:approximation_bounds}
    \sup_{t\in [0,T]} \| u_t-u^k_t\|_{L^2} < \frac{1}{k} e^{-T r_k(T)}, \quad e^{\int_0^T \| \nabla u^k_t\|_{L^\infty} \dd t} \sup_{t\in [0,T]} \| u_t-u^k_t\|_{L^2} <\frac{1}{k}\quad \forall\,
    k\geq 1.
\end{equation}

We divide the proof of Theorem~\ref{thm:intro_passive_scalars} in several steps; we start by proving the existence and uniqueness of a regular Lagrangian flow for $u$ in Lemma~\ref{lem:existence_RLF} below.
The theory of regular Largangian flows was introduced by DiPerna and Lions~\cite{diperna1989ordinary} and further developed by Ambrosio~\cite{Ambrosio2004}; the definition below is taken from~\cite{CriDeL2008}.

\begin{definition}\label{defn:RLF}
    We say that a map $X:[0,\tau]\times D\to D$ is a {\em regular Lagrangian flow} (RLF) for the vector field $u$ on $[0,\tau]$ if
    \begin{itemize}
        \item[i)] For Lebesgue a.e. $x\in D$, the map $t\mapsto X_t(x)$ is an absolutely continuous integral solution of $\dot \gamma_t=u_t(\gamma_t)$ for $t\in [0,\tau]$ with $\gamma\vert_{t=0}=x$.
        \item[ii)] There exists a constant $L$, independent of $t\in [0,\tau]$, such that
        \begin{equation}\label{eq:RLF_compressibility_constant}
            \mathscr{L}^d(X_t^{-1}(A)) \leq L \mathscr{L}^d(A)\quad\text{for every Borel set }A\subset D.
        \end{equation}
    \end{itemize}
\end{definition}

\begin{lemma}\label{lem:existence_RLF}
    For any $u_0\in\tilde \cG_T$, there exists a unique RLF associated to $u$ on $[0,T]$.
\end{lemma}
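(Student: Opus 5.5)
The plan follows Lions' generic-ODE argument, using the smooth approximations $u^k$ from \eqref{eq:approximation_bounds}. For each $k$, the field $u^k$ is smooth and divergence free, so it has a classical measure-preserving flow $X^k$ on $[0,T]$. By Gr\"onwall, two trajectories $X^k_t(x),X^k_t(y)$ separate at most by the factor $e^{\int_0^T\|\nabla u^k\|_{L^\infty}}$.

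The first step is to show that $(X^k)_k$ is Cauchy in $L^1_{\loc}$, uniformly in $t\in[0,T]$; on $\R^2$ I would work with $\min(|\cdot|,1)$ on balls. Fix $k<j$ and set $\delta_t(x)=|X^k_t(x)-X^j_t(x)|$. Then
\begin{equation*}
\frac{\dd}{\dd t}\delta_t(x)\le |u^k_t(X^k_t)-u^k_t(X^j_t)| + |u^k_t(X^j_t)-u^j_t(X^j_t)| \le \|\nabla u^k_t\|_{L^\infty}\delta_t(x) + |(u^k_t-u^j_t)(X^j_t(x))|.
\end{equation*}
Applying Gr\"onwall, integrating in $x$ over a ball (or $\T^2$), and using that $X^j_t$ preserves Lebesgue measure gives
\begin{equation*}
\sup_{t\le T}\int_{B_R}\delta_t\,\dd x\lesssim_R e^{\int_0^T\|\nabla u^k\|_{L^\infty}}\int_0^T\|u^k_s-u^j_s\|_{L^2}\,\dd s.
\end{equation*}
By \eqref{eq:approximation_bounds}, $\|u^k-u^j\|\le\|u^k-u\|+\|u-u^j\|$. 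The $u^k$ term is controlled by the second bound, giving $1/k$. The $u^j$ term is only bounded by $\frac1j e^{-Tr_j(T)}$, so one needs $e^{\int_0^T\|\nabla u^k\|}\le e^{Tr_k(T)}$ to be dominated; this fails in general when $j>k$. This asymmetry is the main obstacle. To fix it, I would swap the roles in the splitting: estimate $\delta$ using $\nabla u^j$ instead of $\nabla u^k$. The same computation then yields $\frac1j + e^{Tr_j(T)}\|u-u^k\|$, which is still not small when $r_j\gg r_k$.

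The remedy is to compare each $X^k$ not with the other $X^j$ but with a fixed candidate. Alternatively, and this is the route I would take, I would use the strengthened factor $e^{-2Tr}$ in $\tilde\cG$. Since Gr\"onwall only needs $\|u^k_s-u_s\|_{L^2}$ evaluated along $X^j$, we have
\begin{equation*}
\int|(u^k-u^j)(X^j)|\le\|u^k-u\|_{L^2}+\|u-u^j\|_{L^2}.
\end{equation*}
I would then choose the Gr\"onwall weight from whichever index has the smaller $r$. Passing along a subsequence with $r_{k}$ nondecreasing, which one may arrange, the factor $e^{Tr_k}$ multiplies errors of size $\frac1j e^{-2Tr_j}\le \frac1j e^{-Tr_k}$ for $j\ge k$. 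This is exactly where the squared exponent in \eqref{eq:defn_residual_v2} enters. The sequence is therefore Cauchy, and $X^k\to X$ in $C([0,T];L^1_{\loc})$.

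Next I would verify that the limit $X$ is a regular Lagrangian flow. Measure preservation passes to the limit, so $L=1$ in \eqref{eq:RLF_compressibility_constant}. For the integral equation, I would write
\begin{equation*}
\int|u^k_s(X^k_s)-u_s(X_s)|\le\|u^k_s-u_s\|_{L^1_{\loc}}+\int|u_s(X^k_s)-u_s(X_s)|.
\end{equation*}
The first term is handled by \eqref{eq:approximation_bounds}. The second tends to $0$ by approximating $u_s$ in $L^1_{\loc}$ with continuous functions and using measure preservation uniformly in $k$. This yields $X_t(x)=x+\int_0^tu_s(X_s(x))\dd s$ for a.e. $x$.

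For uniqueness, I would let $Y$ be any RLF with constant $L$ and repeat the Gr\"onwall comparison between $Y$ and $X^k$. The compressibility bound gives $\int|(u^k-u)(Y_s)|\le L\|u^k-u\|$ on compact sets, so $\int\min(|Y_t-X^k_t|,1)\lesssim L e^{\int\|\nabla u^k\|}\sup\|u^k-u\|\le L/k$. Hence $Y=\lim X^k=X$.
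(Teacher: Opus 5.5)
Your proposal is correct and is essentially the paper's argument: you apply Gr\"onwall with whichever approximating field has the smaller gradient bound, use incompressibility of the other flow, and get uniqueness by comparing an arbitrary RLF with $X^k$ through its compressibility constant. The paper orders the pair by $\int_0^T\|\nabla u^k_t\|_{L^\infty}\dd t$ and uses only the second estimate in \eqref{eq:approximation_bounds}, while you order by $r_k(T)$ and combine $\int_0^T\|\nabla u^k_t\|_{L^\infty}\dd t\leq T r_k(T)$ with the first estimate; this works equally well, with two harmless slips: the available velocity error at positive times is $\frac1j e^{-Tr_j(T)}$ rather than $\frac1j e^{-2Tr_j(T)}$ (the doubled exponent of $\tilde\cG_T$ is used up in propagating the initial error to obtain \eqref{eq:approximation_bounds}), and since you choose the smaller index pair by pair, no monotone subsequence of $(r_k(T))_k$ is needed.
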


\begin{proof}
    The argument is taken from~\cite{Lions1998}. Let $(u^k)_k$ be the sequence of smooth, divergence free fields satisfying \eqref{eq:approximation_bounds} and let $(X^k)_k$ denote the associated flows. Fix $k_0$ large and consider $k,j\geq k_0$; assume for simplicity that $\int_0^T \| \nabla u^k_t\|_{L^\infty} \dd t\leq \int_0^T \| \nabla u^j_t\|_{L^\infty} \dd t$.
    It holds
    \begin{align*}
        |X^k_t(x)-X^j_t(x)|
        & \leq \int_0^t |u^k_r(X^k_r(x))-u^k_r(X^j_r(x))| \dd r + \int_0^t |u^k_r(X^j_r(x))-u^j_r(X^j_r(x))| \dd r\\
        & \leq \int_0^t \|\nabla u^k_t\|_{L^\infty} |X^k_r(x)-X^j_r(x)| \dd r + \int_0^t |u^k_r(X^j_r(x))-u^j_r(X^j_r(x))| \dd r
    \end{align*}
    and so by Gr\"onwall's lemma we find
    \begin{align*}
        \sup_{t\in [0,T]} |X^k_t(x)-X^j_t(x)| \leq e^{\int_0^T \|\nabla u^k_t\|_{L^\infty} \dd r} \int_0^T |u^k_r(X^j_r(x))-u^k_r(X^j_r(x))| \dd r.
    \end{align*}
    Taking the $L^2(\dd x)$-norm on both sides, using Minkowski's inequality and the incompressibility of $X^j$, we arrive at
    \begin{align*}
        \bigg(\int_{D} \sup_{t\in [0,T]} & |X^k_t(x)-X^j_t(x)|^2 \dd x\bigg)^{1/2}
        \leq e^{\int_0^T \|\nabla u^k_t\|_{L^\infty} \dd r}\int_0^T \|u^k_r-u^j_r\|_{L^2} \dd r\\
        & \leq T e^{\int_0^T \|\nabla u^k_t\|_{L^\infty} \dd r} \sup_{t\in [0,T]} \| u^k_t-u_t\|_{L^2} + T e^{\int_0^T \|\nabla u^j_t\|_{L^\infty} \dd r} \sup_{t\in [0,T]} \| u^j_t-u_t\|_{L^2}\\
        & \leq \frac{T}{k}+\frac{T}{k}\leq \frac{2T}{k_0}.
    \end{align*}
    It follows that $(X^k)_k$ is a Cauchy sequence in suitable topologies and it converges to a unique limit $X$ (convergence being e.g. in $L^2(D;C([0,T];D))$, up to considering the maps $X^k_t(x)-x$ instead of $X^k_t(x)$).
    Since $X^k_t$ is Lebesgue-measure preserving for every $k$, the same holds for $X$, and it's easy to check by standardarguments that $X$ is a RLF for $u$ on $[0,T]$, in the sense of Definition~\ref{defn:RLF}.

    Let $\tilde X$ be another RLF for $u$ on $[0,T]$, satisfying \eqref{eq:RLF_compressibility_constant} with constant $L$.
    Then running similar computations as above, with $(u^k,X^k)$ as therein but $(u^j,X^j)$ replaced by $(u,\tilde X)$, one can show that
    \begin{align*}
        \bigg(\int_{D} \sup_{t\in [0,T]} |X^k_t(x)-X^j_t(x)|^2 \dd x\bigg)^{1/2} 
        \leq e^{\int_0^T \|\nabla u^k_t\|_{L^\infty} \dd r} L^{1/2} \int_0^T \|u^k_r-u_r\|_{L^2} \dd r
        \leq \frac{T L^{1/2}}{k}.
    \end{align*}
    It then follows that $(X^k)_k$ converges to $\tilde X$ as $k\to\infty$; by uniqueness of the limit, it follows that $X_t(x)=\tilde X_t(x)$ for every $t\in [0,T]$ and Lebesgue a.e. $x\in D$.
\end{proof}

\begin{remark}\label{rem:invertibility_RLF}
    Let $X^{-1,k}_t$ denote the inverse of $X^k_t$, as a map from $D$ to itself. For any fixed $t\in [0,T]$, by a time-reversal argument, arguing similarly to the proof above it's easy to show that $(X^{-1,k}_t)$ is also Cauchy in suitable topologies, and it converges in measure to a Lebesgue measure preserving map $X^{-1}_t$. Standard arguments then show that, as the notation suggests, $X^{-1}_t$ is the inverse of $X_t$, at least in the sense that $X^{-1}_t(X_t(x))=x$ for Lebesgue a.e. $x\in D$.
\end{remark}

\begin{lemma}\label{lem:uniqueness_passive_scalar}
    Let $u_0\in \tilde\cG_T$ and let $u\in C([0,T];L^2_\sigma)$ be the associated unique solution to \eqref{eq:intro_euler}. Then for any $\rho_0\in L^2$ there can exist at most one weak solution $\rho\in L^\infty([0,T];L^2)$ to \eqref{eq:intro_passive_scalars} satisfying the energy inequality $\| \rho_t\|_{L^2}\leq \| \rho_0\|_{L^2}$ for Lebesgue a.e. $t\in [0,T]$.
\end{lemma}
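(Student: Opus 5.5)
The plan is a weak-strong (duality / relative energy) argument for linear transport, mirroring Step 2 of the proof of Theorem~\ref{thm:intro_main}. We use the smooth approximations $u^k$ satisfying \eqref{eq:approximation_bounds}. By linearity it suffices to show that if $\rho$ is a weak solution with $\rho_0\in L^2$ and $\|\rho_t\|_{L^2}\le\|\rho_0\|_{L^2}$, then $\rho_t=\rho_0\circ X^{-1}_t$, where $X$ is the RLF from Lemma~\ref{lem:existence_RLF}. However, the difference of two energy-inequality solutions need not satisfy the energy inequality. We therefore compare $\rho$ directly with a smooth reference solution, as is done for dissipative solutions.

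\emph{Step 1 (smooth data).} Fix first $\rho_0$ smooth and compactly supported (or smooth on $\T^2$). Let $\rho^k$ be the smooth solution of $\partial_t\rho^k+u^k\cdot\nabla\rho^k=0$, $\rho^k_0=\rho_0$, i.e.\ $\rho^k_t=\rho_0\circ (X^k_t)^{-1}$. Differentiating the flow gives
\[
\|\nabla\rho^k_t\|_{L^\infty}\le \|\nabla\rho_0\|_{L^\infty}\, e^{\int_0^t\|\nabla u^k_r\|_{L^\infty}\dd r}.
\]
We then expand
\[
\|\rho_t-\rho^k_t\|_{L^2}^2=\|\rho_t\|_{L^2}^2-2\langle\rho_t,\rho^k_t\rangle+\|\rho^k_t\|_{L^2}^2 .
\]
The energy inequality bounds the first term by $\|\rho_0\|^2_{L^2}$, and $\|\rho^k_t\|_{L^2}=\|\rho_0\|_{L^2}$. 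The cross term is computed by testing the weak formulation of $\rho$ with the smooth $\rho^k$, which is legitimate after a standard mollification in time, using weak continuity of $\rho$. This yields
\[
\langle\rho_t,\rho^k_t\rangle=\|\rho_0\|_{L^2}^2+\int_0^t\!\!\int \rho_s\,(u_s-u^k_s)\cdot\nabla\rho^k_s\,\dd x\,\dd s ,
\]
hence
\[
\|\rho_t-\rho^k_t\|_{L^2}^2\le 2\|\rho_0\|_{L^2}\|\nabla\rho_0\|_{L^\infty}\, T\, e^{\int_0^T\|\nabla u^k\|_{L^\infty}}\sup_s\|u_s-u^k_s\|_{L^2}\lesssim \frac{T}{k}
\]
by \eqref{eq:approximation_bounds}. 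So $\rho^k\to\rho$, and since the limit of $\rho^k$ does not depend on which solution $\rho$ we started from, uniqueness holds for smooth data. Note that the product $\rho\, u$ must make sense in the weak formulation: $\rho\in L^\infty L^2$ and $u\in L^\infty L^2$ give $\rho u\in L^1$, which pairs with the smooth test function $\nabla\rho^k$. This is fine on $\T^2$; on $\R^2$ we would first use cutoffs, so that the test functions $\rho^k$ with compact support are admissible.

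\emph{Step 2 (general $L^2$ data).} This is the main obstacle, because the Lipschitz bound on $\rho^k$ degenerates. Given $\rho_0\in L^2$ and a solution $\rho$, pick smooth $\rho_0^\varepsilon\to\rho_0$ in $L^2$ and set $\eta=\rho^{k,\varepsilon}$, the smooth solution transported by $u^k$ from $\rho^\varepsilon_0$. Repeating the expansion, now using $\|\rho_t\|^2\le\|\rho_0\|^2$ and $\|\eta_t\|=\|\rho_0^\varepsilon\|$, gives
\[
\|\rho_t-\eta_t\|_{L^2}^2\le \|\rho_0-\rho^\varepsilon_0\|_{L^2}^2+C\,\|\rho_0\|_{L^2}\|\nabla\rho^\varepsilon_0\|_{L^\infty}\frac{T}{k}.
\]
The constant term comes out because $\|\rho_0\|^2-2\langle\rho_0,\rho_0^\varepsilon\rangle+\|\rho_0^\varepsilon\|^2=\|\rho_0-\rho^\varepsilon_0\|^2$. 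Letting first $k\to\infty$ and then $\varepsilon\to0$, using that $\eta=\rho^\varepsilon_0\circ(X^k_t)^{-1}\to\rho^\varepsilon_0\circ X^{-1}_t$ by Remark~\ref{rem:invertibility_RLF}, we identify $\rho_t=\rho_0\circ X_t^{-1}$ for a.e.\ $t$. Any two such solutions therefore coincide.
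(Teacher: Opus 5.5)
Your argument is correct and is essentially the paper's proof: the same relative-energy expansion of $\|\rho_t-\rho^k_t\|_{L^2}^2$ against the smooth solution transported by $u^k$, with the cross term obtained by testing the weak formulation against $\rho^k$, and the error closed by the second bound in \eqref{eq:approximation_bounds}. The only difference is in how general $\rho_0\in L^2$ is handled. The paper chooses $\rho^k_0\to\rho_0$ with $k^{-1}\|\nabla\rho^k_0\|_{L^\infty}\to0$, so a single sequence independent of $\rho$ converges to every such solution; this avoids your iterated limit and the appeal to the RLF and Remark~\ref{rem:invertibility_RLF}. Your Step~2 does not need those either: comparing two solutions with the common $\eta^{k,\varepsilon}$ via the triangle inequality already gives uniqueness.
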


\begin{proof}
    The idea is to mimic the concept of dissipative solutions from~\cite{Lions1996}, in the simplified setting of passive scalar transport.
    Let $\rho\in L^\infty([0,T];L^2)$ be a given weak solution to \eqref{eq:intro_passive_scalars} satisfying the above assumptions. By standard arguments, $\rho$ can be modified on a set of negligible times so that $\rho\in C_w([0,T];L^2)$; lower semicontinuity of the $L^2$-norm in the weak topology then implies that $\| \rho_t\|_{L^2}\leq \| \rho_0\|_{L^2}$ for all $t\in [0,T]$.

    Since $u_0\in \cG_T$, there exists a sequence $(u^k)_k$ of smooth functions such that \eqref{eq:approximation_bounds} holds.
    Let $(\rho^k_0)_k$ be a sequence of smooth approximations of $\rho_0$, to be chosen later, and let $\rho^k$ be the associated classical solutions to
    \begin{equation*}
        \partial_t \rho^k + u^k\cdot\nabla \rho^k=0,\quad\rho^k\vert_{t=0}=\rho^k_0.
    \end{equation*}
    Since $(u^k)_k$ are smooth and divergence free, such solutions are smooth in space-time and satisfy
    \begin{equation}\label{eq:properties_smooth_passive_scalar}
        \| \rho_t\|_{L^2}=\|\rho_0\|_{L^2}\quad\forall\,
        t\in [0,T],\quad
        \sup_{t\in [0,T]} \| \nabla \rho^k_t\|_{L^\infty}
        \leq \|\nabla\rho^k_0\|_{L^\infty} e^{\int_0^T \| \nabla u^k_t\|_{L^\infty} \dd t}.
    \end{equation}
    Since $\rho$ is a weak solution to \eqref{eq:intro_passive_scalars} satisfying the energy inequality, testing against the smooth function $\rho^k$ we then find
    \begin{align*}
        \| \rho_t-\rho^k_t\|_{L^2}^2
        = \| \rho_t\|_{L^2}^2 + \| \rho_0\|_{L^2}^2 - 2\langle \rho^k_t, \rho_t\rangle
        \leq \| \rho^k_0-\rho_0\|_{L^2}^2 + 2\int_0^t \langle \rho_r, (u_r-u^k_r)\cdot\nabla\rho^k_r\rangle \dd r.
    \end{align*}
    Applying \eqref{eq:approximation_bounds} and \eqref{eq:properties_smooth_passive_scalar} we deduce that
    \begin{align*}
        \sup_{t\in [0,T]} \| \rho_t-\rho^k_t\|_{L^2}^2
        & \leq \| \rho^k_0-\rho_0\|_{L^2}^2 + 2 \int_0^T \| \rho_r\|_{L^2} \| u_r-u^k_r\|_{L^2} \| \nabla \rho^k_r\|_{L^\infty} \dd r\\
        & \leq \| \rho^k_0-\rho_0\|_{L^2}^2 + 2 T \| \rho_0\|_{L^2} \|\nabla\rho^k_0\| \frac{1}{k}.
    \end{align*}
    Choosing a sequence $(\rho^k_0)_k$ such that $\rho^k_0\to \rho_0$ in $L^2$ and $k^{-1}\| \nabla\rho^k_0\|_{L^\infty}\to 0$ as $k\to\infty$ (which can always be found e.g. by using mollifiers), we conclude that $\rho^k\to\rho$ in $C([0,T];L^2)$.
    As the sequence $(\rho^k)_k$ constructed in this way does not depend on the weak solution $\rho$ in consideration, and the argument applies to any such weak solution, uniqueness follows. 
\end{proof}

We recall the definition of renormalized solutions to the transport equation \eqref{eq:intro_passive_scalars}. Compared to the original definition from~\cite{diperna1989ordinary}, for simplicity we consider a slightly different class of functions $\beta:\R\to \R$; standard approximation argument however show that, for $L^\infty([0,\tau];L^2)$-valued solutions, the two concepts coincide.

\begin{definition}\label{defn:renormalized_solution}
    A weak solution $\rho\in L^\infty([0,\tau];L^2)$ to \eqref{eq:intro_passive_scalars} is {\em renormalized} if, for any globally Lipschitz, bounded function $\beta:\R\to\R$ such that $\beta(0)=0$, $\beta(\rho)$ is also a weak solution to \eqref{eq:intro_passive_scalars}.
\end{definition}

Note that, for $\beta$ as above, $\beta(\rho)$ is still an element of $L^\infty([0,\tau];L^2)$.
We are now finally ready to present the

\begin{proof}[Proof of Theorem~\ref{thm:intro_passive_scalars}]
    We consider the residual set $\tilde\cG$ as defined in \eqref{eq:defn_residual_v2}.
    Global existence and uniqueness of the RLF then follow from Lemma~\ref{lem:existence_RLF}. Uniqueness of weak solutions to \eqref{eq:intro_passive_scalars} satisfying the energy inequality follows from Lemma~\ref{lem:uniqueness_passive_scalar}.
    
    Concerning existence of solutions to \eqref{eq:intro_passive_scalars}, given $\rho_0\in L^2$, consider $\rho_t:=\rho_0\circ X^{-1}_t$ (which is well-defined thanks to Remark~\ref{rem:invertibility_RLF}). By the incompressibility of the RLF $X$, we have that 
    \begin{equation}\label{eq:energy_equality_passive_scalar}
        \| \rho_t\|_{L^2}=\| \rho_0\|_{L^2}\quad\forall\,
        t\geq 0,
    \end{equation}
    as well as the duality relation $\langle \varphi, \rho_t\rangle=\langle \varphi\circ X_t,\rho_0 \rangle$.
    Using the latter, together with the ODE satisfied by $t\mapsto X_t(x)$, it's easy to see that $\rho$ is weak solution to \eqref{eq:intro_passive_scalars} belonging to $C_w([0,+\infty);L^2)$, thus the unique one;
    moreover \eqref{eq:energy_equality_passive_scalar} implies that $\rho\in C([0,+\infty);L^2)$.
    For any $\beta$ as in Definition~\ref{defn:renormalized_solution}, we have $\beta(\rho_t)=(\beta\circ \rho_0)\circ X^{-1}_t$, which by the same argument is the unique solution associated to the initial condition $\beta\circ \rho_0\in L^2$; in particular, $\rho$ is renormalized.

    It only remains to show that the solution $\rho$ given above is the limit of the viscous approximations \eqref{eq:intro_viscous_passive_scalar} and that \eqref{eq:intro_vanishing_viscosity_passive_scalar} holds.
    The proof is very similar to that of Step 4 in the proof of Theorem~\ref{thm:intro_main}, so we mostly sketch it.
    Given a sequence $(\rho^n_0,\kappa^n)$, the unique parabolic solution to \eqref{eq:intro_viscous_passive_scalar} (cf.~\cite{BCC2024,GalLuo2024}) satisfies the estimate\footnote{In fact, the equality actually holds in \eqref{eq:energy_inequality_viscous_scalar}, although we couldn't find a direct reference in the literature; in any case, inequality \eqref{eq:energy_inequality_viscous_scalar} is sufficient for our argument.}
    \begin{equation}\label{eq:energy_inequality_viscous_scalar}
        \| \rho^n_t\|_{L^2}^2 + 2 \kappa_n \int_0^t \| \nabla\rho^n_r\|_{L^2}^2 \dd r \leq \| \rho^n_0\|_{L^2}^2\quad\forall\, t\geq 0
    \end{equation}
    Using \eqref{eq:energy_inequality_viscous_scalar} and standard arguments, one can then deduce precompactness in $C_w([0,\tau];L^2)$ of the sequence $\{\rho^n\}$; by the assumption $\rho^n_0\to \rho_0$ and lower semicontinuity, any limit point must be a weak solution to \eqref{eq:intro_passive_scalars} satisfying the energy inequality, therefore it must be given by $\rho$ as above. Since $\rho$ satisfies \eqref{eq:energy_equality_passive_scalar} and \eqref{eq:energy_inequality_viscous_scalar} holds, Lemma~\ref{lem:strong_convergence} implies  that $\sup_{t\in [0,\tau]} \| \rho^n_t-\rho_t\|_{L^2}\to 0$ as $n\to\infty$, for any $\tau\in (0,+\infty)$; reinserting this information in \eqref{eq:energy_inequality_viscous_scalar}, statement \eqref{eq:intro_vanishing_viscosity_passive_scalar} follows.
\end{proof}

\subsection{Proof of Corollary~\ref{cor:intro_galerkin}}\label{subsec:proof_co3}

In order to prove Corollary~\ref{cor:intro_galerkin}, we first prove the following result of independent interest, valid in any dimension $d\geq 2$. To the best of our knowledge, measure-valued solutions to the Euler equations so far in the literature have always been constructed by vanishing viscosity schemes; using Galerkin approximations has the advantage to imply the generalized energy equality \eqref{eq:weak_energy_equality}, rather than the inequality \eqref{eq:admissibility_condition}.

\begin{lemma}\label{lem:galerkin_approximations}
    Let $D=\T^d$ with $d\geq 2$, $(u^n_0)_n\subset L^2_\sigma$ such that $\Pi_n u^n_0=u^n_0$, $u^n_0\to u_0\in L^2_\sigma$, and consider the solutions $u^n$ to
    \begin{equation}\label{eq:main_body_Galerkin}
        \begin{cases}
        \partial_t u^n+ \Pi_n [(u^n\cdot\nabla)u^n] = 0,\\
        \nabla\cdot u^n=0,\quad u^n\vert_{t=0}=u^n_0.
    \end{cases}
    \end{equation}
    Then the sequence of generalized Young measures $(\boldsymbol{\nu}^n)_n$ defined by $\nu^n=\delta_{u^n},\lambda^n\equiv 0$ is bounded and precompact in ${\rm Y}^2([0,\tau]\times \T^d,\R^d)$, for any $\tau>0$.
    Moreover any limit point $\boldsymbol{\nu}$ is an admissible measure-valued solution to \eqref{eq:intro_euler}, in the sense of Definition~\ref{defn:measure_valued_solution}, such that \eqref{eq:decomposition_lambda} holds, as well as the {\em generalized energy equality}
    \begin{equation}\label{eq:weak_energy_equality}
        \int_{\T^d} \langle \nu_{t,x}, |z|^2 \rangle\, \dd x + \lambda_t(\T^d) = \| u_0\|_{L^2}^2 \quad\text{ for Lebesgue a.e. }t\in [0,+\infty).
    \end{equation}
\end{lemma}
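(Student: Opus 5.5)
The plan is to first record the structural properties of the Galerkin system \eqref{eq:main_body_Galerkin}. Since $u^n$ takes values in the finite-dimensional space $\Pi_n L^2_\sigma$, the system is a quadratic ODE, so it has a unique local solution. Testing with $u^n$ and using $\langle \Pi_n[(u^n\cdot\nabla)u^n],u^n\rangle=\langle (u^n\cdot\nabla)u^n,u^n\rangle=0$ (by the divergence-free condition and $\Pi_n u^n=u^n$) gives the exact energy equality $\|u^n_t\|_{L^2}=\|u^n_0\|_{L^2}$ for all $t\geq0$. Hence the solution is global, and $\sup_n\sup_t\|u^n_t\|_{L^2}<\infty$ because $u^n_0\to u_0$. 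For $\boldsymbol{\nu}^n=(\delta_{u^n},0,0)$ this gives $\int_{[0,\tau]\times\T^d}\int|z|^2\nu^n_y(\dd z)\dd y=\tau\|u^n_0\|_{L^2}^2$, which is bounded. Corollary~\ref{cor:compactness_gYM} (with $\Omega=\T^d$) then yields boundedness and precompactness in ${\rm Y}^2([0,\tau]\times\T^d,\R^d)$.

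Next I will derive time regularity. Since $\Pi_n$ is bounded on $H^{-s}$ uniformly in $n$, and $(u^n\cdot\nabla)u^n=\nabla\cdot(u^n\otimes u^n)$ is bounded in $L^1$ and therefore in $H^{-1-d/2-\delta}$, we get $\sup_n\|\partial_t u^n\|_{L^\infty H^{-s}}<\infty$ for $s$ large. By the compactness fact recalled in Section~\ref{subsec:notation}, along a subsequence $u^n\to\tilde u$ in $C_w([0,\tau];L^2_\sigma)$ and $\boldsymbol{\nu}^n\to\boldsymbol{\nu}$. Lemma~\ref{lem:barycenter_weak_topology} identifies $\tilde u=\bar\nu$, which gives Point~i) of Definition~\ref{defn:measure_valued_solution}.

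For Point~ii), fix a divergence-free $\phi\in C^\infty_c([0,\tau)\times\T^d)$. The weak formulation of the Galerkin system reads
\[
\int\partial_t\phi\cdot u^n+\nabla(\Pi_n\phi):u^n\otimes u^n\,\dd t\dd x=-\int\phi_0\cdot u^n_0\,\dd x ,
\]
where $\Pi_n$ is self-adjoint. Replace $\Pi_n\phi$ by $\phi$: the error is controlled by $\|\nabla(\Pi_n\phi-\phi)\|_{L^\infty}\|u^n\|^2_{L^2_{t,x}}\to0$, since $\phi$ is smooth. Then pass to the limit using the continuity of the functional \eqref{eq:gYM_quadratic_functional_1} with $M=\nabla\phi$. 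The Young measure convergence handles the quadratic term, and the linear terms converge weakly. This gives \eqref{eq:measure_valued_tested}.

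The energy statements form the main step. Apply \eqref{eq:gYM_quadratic_functional_2} with $\chi\in C([0,\tau])$ and $\varphi\equiv1$, which is admissible on the compact $\T^d$, so $\Phi(y,z)=\chi(t)|z|^2\in\cE_2$. Since $\int_{\T^d}|u^n_t|^2\dd x=\|u^n_0\|^2_{L^2}$ exactly, we obtain
\[
\int\chi(t)\langle\nu_{t,x},|z|^2\rangle\dd t\dd x+\int\chi(t)\lambda(\dd t,\dd x)=\lim_n\int_0^\tau\chi(t)\|u^n_0\|_{L^2}^2\dd t=\|u_0\|^2_{L^2}\int_0^\tau\chi\,\dd t .
\]
This is an equality, and it holds for all signed $\chi$. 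For nonnegative $\varphi\in C^0$ with $\|\varphi\|_\infty\le1$, the same functional with $\varphi(x)\chi(t)$ and $\chi\geq0$ is dominated by the $\varphi\equiv1$ case, which gives \eqref{eq:measure_valued_condition_energy}; for general signs one can split $\chi=\chi^+-\chi^-$ (by approximation) and use the bound $|\varphi|\le\|\varphi\|_\infty$. The time marginal of $\lambda$ is therefore dominated by Lebesgue measure, since $\langle\nu,|z|^2\rangle\geq0$. Disintegration then gives \eqref{eq:decomposition_lambda}, exactly as in Remark~\ref{rem:energy_measure_valued_solutions}. Finally, the displayed identity holding for all $\chi$ gives \eqref{eq:weak_energy_equality} for a.e.\ $t\in[0,\tau]$. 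Since $\tau$ is arbitrary, a diagonal argument over $\tau\in\N$ extends it to $[0,+\infty)$. The delicate point is the disintegration step: I need to confirm that the $t$-marginal of $\lambda$ is absolutely continuous, which follows from the equality because $\lambda\geq0$.
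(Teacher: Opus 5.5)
Your proposal is correct and follows essentially the same route as the paper: exact energy conservation for the Galerkin system, uniform $H^{-s}$ time-Lipschitz bounds giving $C_w$ compactness, replacing $\Pi_n\phi$ by $\phi$ at a cost $\|\nabla(\Pi_n\phi-\phi)\|_{L^\infty}\|u^n\|^2_{L^2_{t,x}}$, continuity of the functionals \eqref{eq:gYM_quadratic_functional_1}--\eqref{eq:gYM_quadratic_functional_2}, and the identity for all $\chi$ yielding \eqref{eq:weak_energy_equality}; your derivation of \eqref{eq:measure_valued_condition_energy} and of the absolute continuity of the time marginal of $\lambda$ from the limiting equality spells out what the paper handles by passing to the limit and citing Remark~\ref{rem:energy_measure_valued_solutions}. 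One small slip: it is $u^n\otimes u^n$, not $(u^n\cdot\nabla)u^n$, that is bounded in $L^1$, and its divergence lies in $W^{-1,1}\subset H^{-1-d/2-\delta}$, which is the bound you actually use.
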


\begin{proof}
    For each $n$, system \eqref{eq:main_body_Galerkin} is well-defined and the associated solution satisfies
    \begin{equation}\label{eq:energy_balance_Galerkin}
        \| u^n_t\|_{L^2}=\| u^n_0\|_{L^2}\quad\forall\,
    t\geq 0.
    \end{equation}
    Using the properties of the projector $\Pi_n$, arguing similarly to the derivation of estimate \eqref{eq:weak_continuity_measure_valued_solutions}, for any $\delta>0$ one has
    \begin{equation}\label{eq:apriori_weak_galerkin}
        \| u^n_t-u^n_s\|_{H^{-d/2-\delta}}\lesssim_\delta |t-s| \| u^n_0\|_{L^2}^2\quad\forall\, s<t 
    \end{equation}
    which implies a uniform-in-$n$ bound, given the assumption that $u^n_0\to u_0$ in $L^2_\sigma$. Moreover, for any $\gamma>d/2+1$, by properties of Fourier projectors and Sobolev embeddings, it holds that
    \begin{align*}
        |\langle \nabla(\varphi-\Pi_n \varphi), u^n_t\otimes u^n_t\rangle|
        \leq \| \nabla\varphi-\Pi_n\nabla\varphi\|_{L^\infty}\| u^n_t\|_{L^2}^2
        \lesssim n^{d/2+1-\gamma} \| \varphi\|_{H^\gamma} \sup_k \| u^k_0\|_{L^2}^2;
    \end{align*}
    as a consequence of the above and the assumption $u^n_0\to u_0$ in $L^2_\sigma$, for any $\phi\in C^\infty_c([0,\tau)\times \T^d;\R^d)$ with $\nabla\cdot \phi=0$, \eqref{eq:main_body_Galerkin} may be expressed in weak form as
    \begin{equation}\label{eq:galerkin_proof_eq1}
         \lim_{n\to\infty}\left| \int_{[0,\tau]\times \T^d} \partial_t \phi_t(x) \cdot u^n_t(x) + \nabla \phi_t(x) : u^n_t(x)\otimes u^n_t(x)\, \dd t \dd x + \int_{\T^d} \phi_0(x) u_0(x) \dd x\right| = 0.
    \end{equation}
    Now let $\boldsymbol{\nu}^n$ be defined as in the statement; by the uniform-in-$n$ estimates \eqref{eq:energy_balance_Galerkin}-\eqref{eq:apriori_weak_galerkin} and Corollary~\ref{cor:compactness_gYM}, $(u^n)_n$ is precompact in $C_w([0,\tau];L^2_\sigma)$ and $(\boldsymbol{\nu}^n)_n$ is precompact in ${\rm Y}^2([0,\tau]\times \T^d,\R^d)$, therefore we can extract a (not relabelled) subsequence such that $(u^n,\boldsymbol{\nu}^n)$ converge to $(u,\boldsymbol{\nu})$. 
    By a standard diagonal argument, the subsequence can be chosen so that it does not depend on $\tau$, with the limit points $(u,\boldsymbol{\nu})$ being defined on $[0,+\infty)\times\T^d$.
    By Lemma~\ref{lem:barycenter_weak_topology}, it then holds $\bar\nu=u$.
    
    Using the continuity of functionals of the form \eqref{eq:gYM_quadratic_functional_1}-\eqref{eq:gYM_quadratic_functional_2} w.r.t. convergence in ${\rm Y}^2([0,T]\times \T^2,\R^2)$ and using \eqref{eq:energy_balance_Galerkin}-\eqref{eq:galerkin_proof_eq1}, it's now easy to see that conditions \eqref{eq:measure_valued_tested}-\eqref{eq:measure_valued_condition_energy} from Definition~\ref{defn:measure_valued_solution} are satisfied, so that $\boldsymbol{\nu}$ is an admissible measure-valued solution; by Remark~\ref{rem:energy_measure_valued_solutions}, \eqref{eq:decomposition_lambda} holds.
    Finally, again thanks to \eqref{eq:energy_balance_Galerkin} and continuity of functionals of the form \eqref{eq:gYM_quadratic_functional_2}, passing to the limit we obtain that
    \begin{align*}
        \int_{[0,\tau]} \chi(t) \left[ \int_{\T^d} \langle \nu_{t,x}, |z|^2 \rangle \dd x +\lambda_t(\T^d) \right] \dd t = \| u_0\|_{L^2}^2 \int_0^T \chi(t) \dd t \quad \forall\, \chi\in C([0,\tau];\R),\ \forall\, \tau\in (0,+\infty)
    \end{align*}
    which by the fundamental lemma of the calculus of variations implies \eqref{eq:weak_energy_equality}.
\end{proof}

\begin{proof}[Proof of Corollary~\ref{cor:intro_galerkin}]
    Let $u_0\in \cG$, for $\cG$ as defined in the proof of Theorem~\ref{thm:intro_main}, so that $u_0\in \cG_T$ for all $T\in\N^\ast$. Below for notational simplicity we argue with $\tau\in (0,+\infty)$ fixed.
    By Lemma~\ref{lem:galerkin_approximations}, any limit point $\boldsymbol{\nu}$ of the sequence $(\boldsymbol{\nu}^n)_n$ is an admissible measure-valued solution to \eqref{eq:intro_euler}, and so by  Theorem~\ref{thm:intro_main} it must be given by $\nu_{t,x}=\delta_{u_t(x)}$, $\lambda=0$; as a consequence, the whole sequence converges.
    It then follows from the arguments in the proof of Lemma~\ref{lem:galerkin_approximations} that $u^n\to u$ in $C_w([0,\tau];L^2_\sigma)$.
    Arguing as in Step 4 of the proof of Theorem~\ref{thm:intro_main}, using the energy equality \eqref{eq:energy_balance_Galerkin} and the assumption $u^n_0\to u_0$ in $L^2_\sigma$, it's then easy to deduce that $u^n\to u$ in $C([0,\tau];L^2_\sigma)$ thanks to Lemma~\ref{lem:strong_convergence}.
\end{proof}

\section{Further consequences}\label{sec:consequences}

The arguments used in the proof of Theorem~\ref{thm:intro_main} and Corollary~\ref{cor:intro_galerkin} reveal a strong form of stability of the solution $u$ associated to $u_0\in \cG$ (resp. $\tilde{\cG}$). To formalize it properly, we need the following.

\begin{definition}\label{defn:set_cF}
    Let $D=\R^d$ or $\T^d$, $d\geq 2$. We denote by $\cF\subset L^2_\sigma(D)$ the set of initial conditions $u_0\in L^2_\sigma(D)$ such that, for any $\tau\in (0,+\infty)$, the following conditions are satisfied:
\begin{itemize}
    \item[i)] there exist a unique dissipative solution $u$ to \eqref{eq:intro_euler} on $[0,\tau]$, in the sense of Definition~\ref{defn:dissipative_solution};
    \item[ii)] this solution $u$ is conservative: $\| u_t\|_{L^2}=\| u_0\|_{L^2}$ for all $t\in [0,\tau]$. 
\end{itemize}
\end{definition}

In the above definition, the fact that $u\in C_w([0,+\infty);L^2_\sigma)$, combined with the energy conservation condition, implies that $u\in C([0,+\infty);L^2_\sigma)$.

\begin{lemma}\label{lem:strong_stability}
    Let $d\geq 2$. For any $u_0\in \cF(D)$, uniqueness holds in the class of admissible measure-valued solutions to \eqref{eq:intro_euler}, in the sense of Definition~\ref{defn:measure_valued_solution}.
    Moreover, for any $\tau\in (0,+\infty)$, the following stability property holds.
    Consider any sequence $(u^n_0,\boldsymbol{\nu}^n)_n\subset L^2_\sigma\times {\rm Y}^2([0,\tau]\times D,\R^d)$ such that:
    \begin{itemize}
        \item $u^n_0\to u_0$;
        \item $\boldsymbol{\nu}^n$ is an admissible measure-valued solution to the $2$D Euler equations, on $[0,\tau]$, with initial condition $u^n_0$.
    \end{itemize}
    Then it holds that
    \begin{equation}\label{eq:strong_stability_gYm}
        \lim_{n\to\infty} \sup_{t\in [0,\tau]} \| \bar\nu^n_t-u_t\|_{L^2}=0, \quad
        \lim_{n\to\infty} \esssup_{t\in [0,\tau]} \left[ \int_D \langle \nu_{t,x}, |z-\bar\nu_{t,x}|^2\rangle \dd x + \lambda_t(D)\right]=0
    \end{equation}
\end{lemma}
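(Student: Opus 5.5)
Uniqueness among admissible measure-valued solutions follows exactly as in Step 3 of the proof of Theorem~\ref{thm:intro_main}. Given an admissible $\boldsymbol{\nu}$ with datum $u_0\in\cF$, Proposition~\ref{prop:weak_strong_uniqueness} shows its barycenter is a dissipative solution, so by i) of Definition~\ref{defn:set_cF} it equals $u$. By ii), $\|u_t\|_{L^2}=\|u_0\|_{L^2}$, and combining Jensen with \eqref{eq:admissibility_condition} forces all the inequalities to be equalities, whence $\lambda=0$ and $\nu_{t,x}=\delta_{u_t(x)}$.

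For the stability statement I would argue by contradiction and through subsequences, following Step 4 of the proof of Theorem~\ref{thm:intro_main}. By \eqref{eq:admissibility_condition} and $u^n_0\to u_0$, the quantities $\int\langle\nu^n,|z|^2\rangle+\lambda^n_t(D)$ are bounded uniformly. Hence Corollary~\ref{cor:compactness_gYM_general} (or Corollary~\ref{cor:compactness_gYM} on $\T^d$) gives precompactness of $\boldsymbol{\nu}^n$ in ${\rm Y}^2$. Estimate \eqref{eq:weak_continuity_measure_valued_solutions} together with the facts in Section~\ref{subsec:notation} gives precompactness of $\bar\nu^n$ in $C_w([0,\tau];L^2_\sigma)$. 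Along a subsequence, $\boldsymbol{\nu}^n\to\boldsymbol{\nu}$ and $\bar\nu^n\to\tilde u$. Lemma~\ref{lem:barycenter_weak_topology} identifies $\tilde u=\bar\nu$. Passing to the limit in \eqref{eq:measure_valued_tested} uses the continuity of the functionals \eqref{eq:gYM_quadratic_functional_1}. Passing to the limit in \eqref{eq:measure_valued_condition_energy} uses \eqref{eq:gYM_quadratic_functional_2} with test functions $\varphi\chi$ of compact support and $\chi\ge0$ (general $\chi$ handled as in \cite{BDLS2011}); the right-hand side converges since $\|u^n_0\|\to\|u_0\|$. Thus $\boldsymbol{\nu}$ is admissible with datum $u_0$, so by the first part $\boldsymbol{\nu}=(\delta_u,0,0)$. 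Since every subsequence has a further subsequence with this same limit, the whole sequence converges and $\bar\nu^n\to u$ in $C_w([0,\tau];L^2_\sigma)$.

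To upgrade to strong convergence I would invoke Lemma~\ref{lem:strong_convergence}, as in \eqref{eq:proof_step4}. For a.e.\ $t$, Jensen and admissibility give $\|\bar\nu^n_t\|_{L^2}^2\le\|u^n_0\|_{L^2}^2$. Because $\bar\nu^n\in C_w$ and the norm is weakly lower semicontinuous, this extends to all $t$. Therefore $\limsup_n\|\bar\nu^n_{t_n}\|\le\|u_0\|=\|u_t\|$, and uniform convergence in $C([0,\tau];L^2)$ follows. For the second limit in \eqref{eq:strong_stability_gYm} I would use the variance identity
\[
\int_D\langle\nu^n_{t,x},|z-\bar\nu^n_{t,x}|^2\rangle\,\dd x+\lambda^n_t(D)=\int_D\langle\nu^n_{t,x},|z|^2\rangle\,\dd x+\lambda^n_t(D)-\|\bar\nu^n_t\|_{L^2}^2\le\|u^n_0\|_{L^2}^2-\|\bar\nu^n_t\|_{L^2}^2 ,
\]
valid for a.e.\ $t$. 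The right-hand side tends to $\|u_0\|^2-\|u_t\|^2=0$ uniformly in $t$, by the strong convergence just established and energy conservation.

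I expect the main obstacle to be the rigorous passage to the limit in the admissibility condition \eqref{eq:measure_valued_condition_energy}, in particular on $\R^d$ where test functions must have compact support and where the time-concentration part of $\lambda$ must be shown to disintegrate as in Remark~\ref{rem:energy_measure_valued_solutions}. I would handle it by testing with nonnegative $\varphi\chi$ and invoking \cite[Prop.~1]{BDLS2011}. The rest of the argument is soft.
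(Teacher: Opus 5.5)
Your proposal is correct and follows essentially the same route as the paper. Uniqueness comes from Step 3. Precompactness of $(\bar\nu^n,\boldsymbol{\nu}^n)$ lets you identify every limit point as $(u,(\delta_u,0,0))$. You then upgrade to uniform strong convergence via Lemma~\ref{lem:strong_convergence}, as in \eqref{eq:proof_step4}, and the variance identity gives the second limit.

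The obstacle you anticipate is not really one. The inequality \eqref{eq:measure_valued_condition_energy} passes to the limit directly for arbitrary $\varphi\in C^0_c(D)$ and $\chi\in C([0,\tau])$, with no sign condition, because $\varphi\chi|z|^2$ is an admissible compactly supported integrand and the right-hand side converges. The disintegration \eqref{eq:decomposition_lambda} then follows automatically from Remark~\ref{rem:energy_measure_valued_solutions}.
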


\begin{proof}
    Uniqueness in the class of admissible measure-valued solutions follows from the definition of $\cF$ and the argument presented in Step 3 of the proof of Theorem~\ref{thm:intro_main}.

    Let $(u^n_0,\boldsymbol{\nu}^n)_n$ be a sequence as above; then arguing similarly to the proofs of Step 4 of Theorem~\ref{thm:intro_main} and Lemma~\ref{lem:galerkin_approximations}, one can easily check the following facts:
    \begin{itemize}
        \item[a)] Estimates \eqref{eq:admissibility_condition}-\eqref{eq:weak_continuity_measure_valued_solutions} provide uniform-in-$n$ bounds, yielding sequential precompactness of $(\bar\nu^n,\boldsymbol{\nu}^n)_n$ in $C_w([0,\tau];L^2_\sigma)\times {\rm Y}^2([0,\tau]\times \T^d,\R^d)$.
        \item[b)] Any subsequence we can extract converges to an admissible measure-valued solution to \eqref{eq:intro_euler}, therefore the whole sequence converges to $(u,(\delta_u,0,0))$.
        \item[c)] Since $\bar\nu^n\to u$ in $C_w([0,\tau];L^2_\sigma)$, $u^n_0\to u_0$ in $L^2$ and \eqref{eq:admissibility_condition} holds, while $u$ is conservative, we can argue as in \eqref{eq:proof_step4} to deduce that $\sup_{t\in [0,\tau]} \| \bar\nu_t^n - u_t\|_{L^2}\to 0$ as $n\to\infty$.
    \end{itemize}
    Finally, since $\nu_{t,x}$ is a probability measure, one has
    \begin{align*}
        \langle \nu_{t,x}, |z|^2\rangle = \langle \nu_{t,x}, |z-\bar\nu_{t,x}|^2\rangle + |\bar\nu_{t,x}|^2
    \end{align*}
    so that \eqref{eq:admissibility_condition} implies that
    \begin{equation}\label{eq:admissibility_condition_v2}
    \esssup_{t\in [0,\tau]} \left[\int_D \langle \nu_{t,x}, |z-\bar\nu_{t,x}|^2 \rangle\, \dd x + \lambda_t(D)\right] \leq \sup_{t\in [0,\tau]} \big[\| u^n_0\|_{L^2}^2 - \| u^n_t\|_{L^2}^2\big].
    \end{equation}
    Together with the convergence established in Point c) above, \eqref{eq:admissibility_condition_v2} readily yields \eqref{eq:strong_stability_gYm}.
\end{proof}

For any $t\geq 0$, we can define the ``solution map'' $S_t:\cF(D)\to L^2_\sigma$ by $S_t(u_0)= u_t$, where $u$ is the unique solution starting from $u_0$. Now fix $d=2$ and let $\cG$ be the residual set defined by~\eqref{eq:defn_residual}, to which Theorem~\ref{thm:intro_main} applies. The relevance of the set $\cF(D)$, in connection to $\cG$, comes from the following dynamical considerations.

\begin{lemma}\label{lem:dynamical_properties}
    Let $d=2$. The following hold:
    \begin{itemize}
        \item[1)] for any $t\geq 0$, $S_t$ is continuous from $\cF(D)$ to $L^2_\sigma$;
        \item[2)] for any $t\geq 0$, $S_t(\cG)\subset \cF(D)$;
        \item[3)] for any $s,t\geq 0$ and any $u_0\in \cG$, we have the semiflow property $S_t(S_s(u_0))=S_{t+s}(u_0)$.
    \end{itemize}
\end{lemma}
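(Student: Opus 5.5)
We treat the three points separately; all of them rely on Lemma~\ref{lem:strong_stability} and on the construction of $\cG$ in the proof of Theorem~\ref{thm:intro_main}.

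For 1), fix $t\geq 0$, $u_0\in\cF(D)$ and a sequence $u^n_0\to u_0$ in $L^2_\sigma$ with $u^n_0\in \cF(D)$. Let $u^n$ be the unique dissipative solution from $u^n_0$ on $[0,\tau]$, $\tau>t$. Since $u^n$ is conservative, $\boldsymbol{\nu}^n:=(\delta_{u^n},0,0)$ satisfies Definition~\ref{defn:measure_valued_solution}; the one point to check is that $u^n$ is a weak solution, i.e.\ satisfies \eqref{eq:measure_valued_tested}. Dissipative solutions need not be weak solutions a priori, so we argue differently. Fix any admissible measure-valued solution $\tilde{\boldsymbol{\nu}}^n$ from $u^n_0$, e.g.\ a vanishing viscosity limit as in Step 4 of the proof of Theorem~\ref{thm:intro_main}. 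By Proposition~\ref{prop:weak_strong_uniqueness} its barycenter is dissipative, so it equals $u^n$. Lemma~\ref{lem:strong_stability} applied to $(u^n_0,\tilde{\boldsymbol{\nu}}^n)$ then gives $\sup_{[0,\tau]}\|u^n_t-u_t\|_{L^2}\to 0$, which yields continuity of $S_t$.

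For 2), take $u_0\in\cG$, $s\geq 0$, and set $w_0:=S_s(u_0)=u_s$. We must show uniqueness and conservativity of dissipative solutions from $w_0$ on any $[0,\tau]$. Conservativity of the candidate $w_t:=u_{s+t}$ is inherited from \eqref{eq:energy_equality_proof}. For uniqueness, use the approximating smooth solutions $u^k$ from Step 1 with $T\geq s+\tau$, and set $v^k_t:=u^k_{s+t}$. These are smooth Euler solutions with
\begin{equation*}
e^{\int_0^\tau\|\nabla v^k\|_{L^\infty}}\,\|v^k_0-w_0\|_{L^2}\leq e^{Tr_{n_k}(T)}\sup_{[0,T]}\|u^k-u\|_{L^2}.
\end{equation*}
Making the right-hand side small is the main obstacle. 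The bound $\sup_{[0,T]}\|u^k-u\|\le e^{Tr_{n_k}(T)}\|\varphi_{n_k}-u_0\|$ only controls it by $1/k$ up to an extra factor $e^{Tr_{n_k}(T)}$. I would circumvent this by comparing directly with $u^k$ started at time $0$. Given a dissipative $\tilde w$ on $[0,\tau]$ from $w_0$, concatenate $u|_{[0,s]}$ with $\tilde w(\cdot-s)$, and check that the concatenation is dissipative on $[0,s+\tau]$ from $u_0$. This follows since inequality \eqref{eq:defn_dissipative_solution} composes across time $s$: the exponential weights multiply, and at time $s$ the solution $u$ satisfies \eqref{eq:defn_dissipative_solution} with equality-type bounds, being a strong limit of smooth solutions. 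Step 2 then forces the concatenation to equal $u$, giving $\tilde w=w$. If the concatenation check proves delicate, the fallback is to work with $\tilde\cG$ and the bounds \eqref{eq:approximation_bounds}, which give exactly the smallness needed in the display above.

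For 3), with $u_0\in\cG$ the function $t\mapsto u_{s+t}$ is a conservative dissipative solution from $S_s(u_0)$: it is the $C([0,\tau];L^2)$ limit of the smooth time-shifted solutions $v^k$, and \eqref{eq:defn_dissipative_solution} passes to strong limits. By the uniqueness established in 2), it coincides with $S_t(S_s(u_0))$, which proves the semiflow property.
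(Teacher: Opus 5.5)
Your proposal is correct. Points 1) and 3) follow the paper: 1) via Lemma~\ref{lem:strong_stability}, 3) via the uniqueness from 2). Routing 1) through the barycenter of an admissible measure-valued solution started from $u^n_0$ soundly fills in a detail the paper leaves implicit.

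In 2) you take a different route.

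\emph{Where the paper differs.} The paper runs the direct estimate you began with, but bounds $\|u_s-u^k_s\|_{L^2}\leq e^{\int_0^s\|\nabla u^k_r\|_{L^\infty}\dd r}\|u_0-u^k_0\|_{L^2}$, i.e.\ it uses \eqref{eq:consequence_dissipative_solution} on $[0,s]$ only, not the crude $\sup_{[0,T]}$ bound. The two Gr\"onwall weights then multiply to $e^{\int_0^{s+\tau}\|\nabla u^k_r\|_{L^\infty}\dd r}\leq e^{\int_0^{T}\|\nabla u^k_r\|_{L^\infty}\dd r}$, and the defining condition of $\cG_T$ gives the bound $1/k$ at once. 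So the obstacle you identified is an artifact of how you split the estimate. The fallback to $\tilde\cG$ is therefore unnecessary, which is just as well, since it would prove the lemma for a different set.

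\emph{Your gluing argument.} It encodes the same multiplicativity at the level of Definition~\ref{defn:dissipative_solution}. Multiply the inequality for $u$ at time $s$ by $e^{2\int_s^r\|\mathscr{D}(v_\sigma)^-\|_{L^\infty}\dd\sigma}$ and insert it into the inequality for $\tilde w$ tested against $v(\cdot+s)$. This gives exactly \eqref{eq:defn_dissipative_solution} for the glued function on $[0,s+\tau]$, after which Step~2 applies. Only the dissipative inequality for $u$ on $[0,s]$ is needed, not any ``equality-type'' bound.

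\emph{What each route buys.} The paper's route is quantitative: it shows that $u_s$ is again approximated by smooth data at the rate built into \eqref{eq:defn_residual}. Yours is soft and uses only that $u$ is the unique dissipative solution from $u_0$. Combined with existence of dissipative solutions for arbitrary data in $L^2_\sigma$ (barycenters of vanishing-viscosity measure-valued solutions, via Proposition~\ref{prop:weak_strong_uniqueness}), it adapts to give $S_t(\cF)\subset\cF$ and the semiflow property on all of $\cF$, not only on $\cG$.
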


\begin{proof}
    Point 1) is an immediate consequence of Lemma~\ref{lem:strong_stability} (in fact, it holds for any $d\geq 2$); point 3) follows from 2) and the definition of $S_t$, $S_s$.

    So we only need to show 2). Let $t\geq 0$ fixed and $u_0\in \cG$; set $v_0=S_t(u_0)=u_t$. Clearly there exists a conservative solution to 2D Euler starting from $v_0$, which is given by $v_r:= u_{t+r}$; so we only need to verify condition i) from Definition~\ref{defn:set_cF}.
    Fix any $\tau\in (0,+\infty)$ and consider $T\in \N^\ast$ such that $\tau+t\leq T$. By assumption $u_0\in \cG_T$, therefore we can find a sequence $(u^k_0)_k$ of smooth initial conditions such that
    \begin{equation}\label{eq:dynamical_proof1}
        \| u_0-u^k_0\|_{L^2} <\frac{1}{k} \exp\left(-\int_0^T \| \nabla u^k_r\|_{L^\infty} \dd r \right)
    \end{equation}
    Set $v^k_0:= u^k_t$, $v^k_r:=u^k_{t+r}$ and let $\tilde v$ be any dissipative solution to \eqref{eq:intro_euler} starting from $v_0$; estimates \eqref{eq:dynamical_proof1}-\eqref{eq:consequence_dissipative_solution} then imply that
    \begin{align*}
        \sup_{t\in [0,\tau]} \| \tilde v_t-v^k_t\|_{L^2}
        & \leq \exp\left( \int_0^\tau \| \nabla v^k_r\|_{L^\infty} \dd r \right)  \| v_0-v^k_0\|_{L^2}\\
        & \leq \exp\left( \int_{t}^{t+\tau} \| \nabla u^k_r\|_{L^\infty} \dd r \right) \| u_t-u^k_t\|_{L^2}\\
        & \leq \exp\left( \int_{t}^{t+\tau} \| \nabla u^k_r\|_{L^\infty} \dd r \right) \exp\left( \int_{0}^{t} \| \nabla u^k_r\|_{L^\infty} \dd r \right) \| u_0-u^k_0\|_{L^2}<\frac{1}{k}
    \end{align*}
    where the last inequality comes from $t+\tau\leq T$ and \eqref{eq:dynamical_proof1}.
    It follows in particular that $v^k\to \tilde v$, regardless of the $\tilde v$ in consideration, which necessarily implies that $\tilde v=v$. As the argument holds for any $\tau\in (0,+\infty)$, this completes the proof.    
\end{proof}

Finally we discuss the case of solutions with $L^p$-valued vorticity $\omega=\nabla\cdot u$. Recall that the 2D Euler equations may be rewritten in vorticity form as
\begin{equation}\label{eq:2D_euler_vorticity}
    \partial_t \omega + u\cdot\nabla \omega=0
\end{equation}
where $u$ can be recovered from $\omega$ via the Biot--Savart law.
For any $p\in [1,+\infty)$, for $D=\R^2$ or $T^2$, let us define
\begin{equation*}
    \cW^p(D):={f\in L^2_\sigma(D): \nabla^\perp \cdot f \in L^p(D)}, \quad \| f\|_{\cW^p}:=\| f\|_{L^2_\sigma} + \| \nabla^\perp\cdot f\|_{L^p}. 
\end{equation*}
It's easy to see that $\cW^p$ is a separable Banach space. Thanks to the active scalar structure of \eqref{eq:2D_euler_vorticity}, it is well-known that smooth solutions $u$ to 2D Euler satisfy $\| u_t\|_{\cW^p}=\| u_0\|_{\cW^p}$ for all $t\geq 0$.
In this context, one has the following counterpart of Theorem~\ref{thm:intro_main}.

\begin{corollary}\label{cor:vorticity}
    Let $p\in [1,\infty)$.
    Then exists a residual set $\cG'\subset \cW^p$ such that, for any $u_0\in\cG'$, all the conclusions from Theorems~\ref{thm:intro_main}-\ref{thm:intro_passive_scalars} hold.
    Additionally, the unique global solution $u$ in this case satisfies $\| u_t\|_{\cW^p}= \|u_0\|_{\cW^p}$ for all $t\geq 0$ and its vorticity $\omega=\nabla^\perp\cdot u\in C([0,+\infty);L^p)$ is a renormalized solution to \eqref{eq:2D_euler_vorticity}, which admits the Lagrangian representation $\omega_t(x)=u_0(X^{-1}_t(x))$.
\end{corollary}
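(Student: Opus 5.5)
We mimic the construction of $\cG$ in \eqref{eq:defn_residual}, but now working in the Banach space $\cW^p$. First fix a countable set $\{\varphi_n\}_n\subset H^3_\sigma\setminus\{0\}$ that is dense in $\cW^p$; smooth divergence free fields with compact support, or trigonometric polynomials on $\T^2$, are dense in $\cW^p$. Let $r_n$ be as in the proof of Theorem~\ref{thm:intro_main}, and define
\begin{equation*}
\cG'_{k,T}:=\bigcup_{n}\Big\{u_0\in\cW^p:\ \|u_0-\varphi_n\|_{\cW^p}<\tfrac1k e^{-2Tr_n(T)}\Big\},\qquad \cG':=\bigcap_{T}\bigcap_k \cG'_{k,T}.
\end{equation*}
Each $\cG'_{k,T}$ is open and dense in $\cW^p$, so $\cG'$ is residual.

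Since $\|\cdot\|_{L^2}\le\|\cdot\|_{\cW^p}$, every $u_0\in\cG'$ admits approximants satisfying \eqref{eq:proof_eq1} and \eqref{eq:approximation_bounds} in $L^2$. The proofs of Theorems~\ref{thm:intro_main}--\ref{thm:intro_passive_scalars} (Steps 1--4, Lemmas~\ref{lem:existence_RLF} and~\ref{lem:uniqueness_passive_scalar}) used only these bounds and never membership in $\tilde\cG$ itself. Hence they apply verbatim and give the unique solution $u$, the regular Lagrangian flow $X$, and all the remaining conclusions.

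For the vorticity, let $u^k$ be the smooth solutions with $u^k_0=\varphi_{n_k}\to u_0$ in $\cW^p$. Then $\omega^k_t=\omega^k_0\circ (X^k_t)^{-1}$. We claim $\omega^k_t\to\omega_0\circ X_t^{-1}$ in $L^p$, uniformly in $t\in[0,T]$. To see this, split
\begin{equation*}
\omega^k_0\circ (X^k_t)^{-1}-\omega_0\circ X^{-1}_t=(\omega^k_0-\omega_0)\circ (X^k_t)^{-1}+\big(\omega_0\circ (X^k_t)^{-1}-\omega_0\circ X^{-1}_t\big).
\end{equation*}
The first term has $L^p$ norm $\|\omega^k_0-\omega_0\|_{L^p}\to0$, since $X^k_t$ preserves measure. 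For the second term, approximate $\omega_0$ in $L^p$ by a continuous compactly supported $g$. By measure preservation the error in both compositions is $\|\omega_0-g\|_{L^p}$, so it remains to bound $\|g\circ (X^k_t)^{-1}-g\circ X^{-1}_t\|_{L^p}$. This tends to zero uniformly in $t$, provided $(X^k_t)^{-1}\to X^{-1}_t$ in measure (locally on $\R^2$), uniformly in $t$. That follows from the time-reversal argument of Remark~\ref{rem:invertibility_RLF}: the backward flows solve the ODE for the reversed smooth fields, so the Gr\"onwall/$L^2$ estimate of Lemma~\ref{lem:existence_RLF} applies with the same bounds and yields convergence in $L^2(D;C([0,T]))$.

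Because $u^k\to u$ in $C([0,T];L^2)$, we get $\omega^k\to\nabla^\perp\cdot u$ in distributions. Therefore $\omega_t=\omega_0\circ X_t^{-1}$ (the displayed formula in the statement should presumably read $\omega_0$). The uniform-in-$t$ $L^p$ convergence gives $\omega\in C([0,T];L^p)$, with $\|\omega_t\|_{L^p}=\|\omega_0\|_{L^p}$ by measure preservation of $X_t$. Together with energy conservation this yields $\|u_t\|_{\cW^p}=\|u_0\|_{\cW^p}$.

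For $\beta$ Lipschitz and bounded with $\beta(0)=0$, we have $\beta(\omega_t)=\beta(\omega_0)\circ X_t^{-1}$. This is a weak solution by the same duality argument as in the proof of Theorem~\ref{thm:intro_passive_scalars}, so $\omega$ is renormalized. That $\omega$ solves \eqref{eq:2D_euler_vorticity} in the weak sense follows the same way: test against $\varphi$, write $\langle\varphi,\omega_t\rangle=\langle\varphi\circ X_t,\omega_0\rangle$, and differentiate in $t$ using the ODE for $X$. Here $u\,\omega\in L^1_{\loc}$ needs care when $p<2$; alternatively, one can pass to the limit in the $u^k\omega^k$ term using strong $L^2\times L^p$ convergence when $p\ge 4/3$.

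The main obstacle is exactly this low-$p$ regime. For $p$ close to $1$, $u\omega$ may fail to be locally integrable, so the equation should be understood in the Lagrangian/renormalized sense. I would state the result in that sense, deriving it from $\beta(\omega)$ with $\beta$ bounded, for which $u\,\beta(\omega)\in L^1_{\loc}$.
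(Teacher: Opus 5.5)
Your argument is correct. The first part matches the paper's proof. In both, the residual set is built from $\cW^p$-balls around a dense smooth family, and the proofs of Theorems~\ref{thm:intro_main}--\ref{thm:intro_passive_scalars} then go through verbatim because $\|\cdot\|_{L^2}\le\|\cdot\|_{\cW^p}$. Your radius $\frac1k e^{-2Tr_n(T)}$ is actually the right one: the RLF and passive-scalar conclusions use the weighted bound \eqref{eq:approximation_bounds}, whereas the paper writes $e^{-Tr_n(T)}$.

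The genuine difference is the vorticity part. The paper does not prove it directly; it invokes the results cited in Remark~\ref{rem:vorticity_case}. Those show that for \emph{every} $u_0\in\cW^p$, solutions obtained by the DiPerna--Majda schemes have Lagrangian, renormalized vorticity in $C([0,\infty);L^p)$ with conserved $L^p$ norm. For $u_0\in\cG'$ the unique solution is such a limit (Theorem~\ref{thm:intro_main}), so it inherits these properties. That route applies to arbitrary $u_0\in\cW^p$, but it rests on external DiPerna--Lions-type stability theory, which is more delicate for $p=1$.

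You instead use the structure of $\cG'$ itself. The same weighted smooth approximations that produce the RLF in Lemma~\ref{lem:existence_RLF} give, via the time reversal of Remark~\ref{rem:invertibility_RLF}, convergence $(X^k_t)^{-1}\to X^{-1}_t$ in measure, uniformly in $t$. Meanwhile, $\cW^p$-closeness gives $\omega^k_0\to\omega_0$ in $L^p$. Measure preservation plus a $C_c$-approximation then carries $\omega^k_t=\omega^k_0\circ(X^k_t)^{-1}$ to the limit. This yields continuity in $L^p$, norm conservation and the renormalization property at once. Your route is self-contained, stays within the paper's own toolkit, and treats all $p\in[1,\infty)$ alike.

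There are two minor imprecisions.
\begin{enumerate}
\item The backward Gr\"onwall estimate does not give convergence in $L^2(D;C([0,T]))$, since $t\mapsto X^{-1}_t(x)$ is not an ODE trajectory. What it gives is $\sup_{t\in[0,T]}\|(X^k_t)^{-1}-(X^j_t)^{-1}\|_{L^2}\le 2T/k_0$, with the supremum outside the spatial integral. That bound is exactly what your argument needs, so nothing breaks.
\item For $4/3\le p<2$, passing to the limit in $u^k\omega^k$ requires the Biot--Savart bound $\|u\|_{L^{2p/(2-p)}}\lesssim\|\omega\|_{L^p}$. Strong convergence in $L^2\times L^p$ alone only controls the product when $p\ge 2$.
\end{enumerate}
You are also right that the Lagrangian formula in the statement should read $\omega_0(X^{-1}_t(x))$.
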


\begin{remark}\label{rem:vorticity_case}
    Many of the conclusions of Corollary~\ref{cor:vorticity} concerning the properties of $\omega$ are not surprising, since they are in fact valid for a larger class of weak solutions to 2D Euler with $L^p$-valued vorticity, which can be constructed for {\em any} $u_0\in\cW^p$, even in the case where uniqueness is not known; see for instance the works \cite{CFLNS2016,CNSS2017,CCS2020,CCS2021,DeRPar2025}.
    However, with uniqueness at hand, for $u_0\in\cG'$ one can now conclude that all the approximation schemes proposed in~\cite{DiPMaj1987} (smooth initial data approximation, vanishing viscosity scheme, vortex method) converge to the same unique limit, without the need to extract any subsequence.
\end{remark}

\begin{proof}
    With the same notations from the beginning of the proof of Theorem~\ref{thm:intro_main}, consider now a $\{\varphi_n\}_n\subset C^\infty_c$ dense in $\cW^p$ and set
    \begin{equation*}\begin{split}
        &\mathcal{G}'_{k,T} :=\bigcup_{n\in \N} \left\{u_0\in L^2_\sigma: \|u_0-\varphi_n\|_{\cW^p} < \frac{1}{k} e^{-T r_n(T) } \right\},\quad 
        \cG_{T}':= \bigcap_{k=1}^{+\infty} \cG_{k,T}, \quad
        \cG':= \bigcap_{T=1}^{+\infty} \cG_{T}.
    \end{split}\end{equation*}
    The first part of the statement then follows verbatim from the same proof, while the second part from the results mentioned in Remark~\ref{rem:vorticity_case}.
\end{proof}

\begin{remark}\label{rem:long_time}
    In the case $p=2$, by combining Lemma~\ref{lem:dynamical_properties} with Corollary~\ref{cor:vorticity}, we obtain the existence of a residual set $\cG'\subset \cW^2=H^1$ such that, for any $u_0\in \cG'$ there exists a unique global solution to 2D Euler satisfying
    \begin{equation}\label{eq:energy_enstrophy_balance}
        \| u_t\|_{L^2}=\| u_0\|_{L^2}, \quad \| \omega_t\|_{L^2}=\|\omega_0\|_{L^2}\quad\forall\, t\geq 0,
    \end{equation}
    as well as the existence of a continuous ``semiflow'' $(S_t)_{t\geq 0}$, $S_t:u_0\mapsto u_t$, from $\cG'$ to $\cF\subset L^2_\sigma$.
    Exploiting the active scalar structure \eqref{eq:2D_euler_vorticity} and the stability properties of DiPerna--Lions flows (cf. Remark~\ref{rem:vorticity_case}), it's not difficult to show that in fact $S_t$ is continuous from $\cG'$ to $\cF(D)\cap H^1$, endowed with the $H^1$-metric.
\end{remark}

Conservation of both energy and enstrophy in \eqref{eq:energy_enstrophy_balance} suggests the possibility of both inverse and direct cascades, leading to coarse graining and loss of compactness in $H^1$ in the long-time limit, with important conjectures proposed by Shnirelman~\cite{Shnirelman2013} and {\v S}verak~\cite{Sverak} concerning the long-time behaviour of 2D fluids (see also the work~\cite{DolDri2022} and the review~\cite{DriElg2023}). Related to this, let us  mention the recent remarkable works~\cite{said2024small,alazard2026generic}, where it is shown that for Baire-generic initial conditions $u_0\in H^s$ (with $s>2$ in $\R^2$ and $s>5$ in $\T^2$), the norm $\| u_t\|_{H^s}$ blows up as $t\to\infty$.
The results from Corollary~\ref{cor:vorticity} and Remark~\ref{rem:long_time} could in principle be used to investigate similar properties for Baire-generic initial conditions $u_0\in H^1$, as they supply us with a well-defined dynamics $(S_t)_{t\geq 0}$ in a low regularity regime where Yudovich theory no longer applies. Answering such conjectures remains however an extremely challenging problem.

\appendix

\section{Some useful results}\label{app:tech-lem}

We give here the proof of the exponential growth estimate for the gradient of regular solutions to 2D Euler.

\begin{proof}[Proof of Proposition~\ref{prop:global_bounds}]
    The statements about global existence in $C([0,+\infty);H^s)$ and energy equality \eqref{eq:standard_energy_conservation} are classical and can be found in~\cite{Lions1996}; therefore we only focus on proving \eqref{eq:exponential_bound}.
    By a standard application of the Kato-Ponce commutator estimates, one has
    \begin{equation}\label{eq:kato_ponce}
        \frac{\dd}{\dd t} \| u_t\|_{H^s}^2 
        \lesssim \| \nabla u_t\|_{L^\infty} \| u_t\|_{H^s}^2;
    \end{equation}
    $\| \nabla u_t\|_{L^\infty}$ can then be estimated using logarithmic inequalities (see e.g.~\cite{KOT2002}):
    \begin{equation}\label{eq:logarithmic_ineq}
        \| \nabla u_t\|_{L^\infty} \lesssim \| \omega_t\|_{L^\infty} \log\left(e + \frac{\| u_t\|_{H^s}^2}{\| \omega_t\|_{L^\infty}^2}\right)
    \end{equation}
    Since in 2D the vorticity $\omega$ is purely transported by the velocity $u$, it satisfies $\| \omega_t\|_{L^\infty}=\| \omega_0\|_{L^\infty}$ for all $t\geq 0$; combining this with \eqref{eq:kato_ponce}-\eqref{eq:logarithmic_ineq}, we obtain that
    \begin{align*}
        \frac{\dd}{\dd t} \log\left( e +\frac{\| u_t\|_{H^s}^2}{\|\omega_0\|_{L^\infty}^2}\right)
        = \frac{1}{\|\omega_0\|_{L^\infty}^2 e + \| u_t\|_{H^s}^2} \frac{\dd}{\dd t}\| u_t\|_{H^s}^2
        \lesssim \| \omega_0\|_{L^\infty} \log\left(e + \frac{\| u_t\|_{H^s}^2}{\| \omega_t\|_{L^\infty}^2}\right)
    \end{align*}
    so that Gr\"onwall's lemma yields
    \begin{align*}
        \log\left( e +\frac{\| u_t\|_{H^s}^2}{\|\omega_0\|_{L^\infty}^2}\right) \leq e^{C \| \omega_0\|_{L^\infty} t} \log\left( e +\frac{\| u_0\|_{H^s}^2}{\|\omega_0\|_{L^\infty}^2}\right)
    \end{align*}
    Reinserting this estimate in \eqref{eq:logarithmic_ineq} we obtain
    \begin{align*}
        \| \nabla u_t\|_{L^\infty} \lesssim \| \omega_0\|_{L^\infty} e^{C_s t} \log\left( e +\frac{\| u_0\|_{H^s}^2}{\|\omega_0\|_{L^\infty}^2}\right).
    \end{align*}
    The bound \eqref{eq:exponential_bound} then follows by reabsorbing hidden constants inside the exponential.
\end{proof}

The next lemma allows to upgrade weak convergence results to uniform strong convergence ones. It's a simple variant of \cite[Cor. B.5]{GaLeNi2026}, which better suits our needs by dropping the assumption of equicontinuity of $t\mapsto \| f^n_t\|_{L^2}$. We recall that the notion of convergence in $C_w([0,\tau];L^2_\sigma)$, explained in Section~\ref{subsec:notation}, can be found in \cite[Sec. 2.5]{GaLeNi2026}.

\begin{lemma}\label{lem:strong_convergence}
    Let $(f_n)_n\subset C_w([0,\tau];L^2_\sigma)$, $f\in C_w([0,\tau];L^2_\sigma)$ be such that the following hold:
    \begin{itemize}
        \item[i)] $f_n\to f$ in $C_w([0,\tau];L^2_\sigma)$;
        \item[ii)] $f\in C([0,\tau];L^2_\sigma)$;
        \item[iii)] for any sequence $(t_n)_n\subset  [0,\tau]$ such that $t_n\to t$, one has $\limsup_{n\to\infty} \| f^n_{t_n}\|_{L^2} \leq \| f_t\|_{L^2}$.
    \end{itemize}
    Then
    \begin{equation}\label{eq:strong_convergence}
        \lim_{n\to\infty} \sup_{t\in [0,\tau]} \| f^n_t-f_t\|_{L^2}=0.
    \end{equation}
\end{lemma}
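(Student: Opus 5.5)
I will argue by contradiction, combining a compactness argument with the standard fact that in a Hilbert space weak convergence plus convergence of norms gives strong convergence. Suppose \eqref{eq:strong_convergence} fails. Then there exist $\varepsilon>0$, a (not relabelled) subsequence, and times $t_n\in[0,\tau]$ such that $\| f^n_{t_n}-f_{t_n}\|_{L^2}\geq \varepsilon$ for all $n$. Since $[0,\tau]$ is compact, I can pass to a further subsequence and assume $t_n\to t$ for some $t\in[0,\tau]$. By ii), $f_{t_n}\to f_t$ strongly in $L^2$. It therefore suffices to show that $f^n_{t_n}\to f_t$ strongly in $L^2$, which gives the contradiction.

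First I show that $f^n_{t_n}\rightharpoonup f_t$ weakly in $L^2_\sigma$. Fix $g\in L^2_\sigma$ and write
\begin{equation*}
\langle f^n_{t_n}-f_t,g\rangle=\langle f^n_{t_n}-f_{t_n},g\rangle+\langle f_{t_n}-f_t,g\rangle .
\end{equation*}
The first term is bounded by $\sup_{s\in[0,\tau]}|\langle f^n_s-f_s,g\rangle|$, which tends to zero by i), using the definition of convergence in $C_w([0,\tau];L^2_\sigma)$. The second term tends to zero by the continuity of $f$ (weak continuity already suffices).

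Next, by weak lower semicontinuity of the norm, $\|f_t\|_{L^2}\leq \liminf_n\|f^n_{t_n}\|_{L^2}$. Assumption iii) gives $\limsup_n \|f^n_{t_n}\|_{L^2}\leq \|f_t\|_{L^2}$, so the norms converge. Expanding $\|f^n_{t_n}-f_t\|_{L^2}^2=\|f^n_{t_n}\|^2+\|f_t\|^2-2\langle f^n_{t_n},f_t\rangle$ and using the weak convergence, the right-hand side tends to $0$. Hence $f^n_{t_n}\to f_t$ strongly.

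Finally, $\|f^n_{t_n}-f_{t_n}\|_{L^2}\leq \|f^n_{t_n}-f_t\|_{L^2}+\|f_t-f_{t_n}\|_{L^2}\to 0$, which contradicts the choice of $\varepsilon$.

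No step here is a real obstacle. The only point needing care is that iii) must be applied along the extracted subsequence. This is legitimate: iii) holds for every sequence of times, so along a subsequence $n_k$ one applies it to the sequence obtained by extending $(t_{n_k})$ to all indices, for example by setting the intermediate times equal to $t$. Note also that the weak convergence must hold uniformly in time, which is exactly why the $C_w$ topology with uniform weak convergence is used in i).
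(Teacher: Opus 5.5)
Your proof is correct and follows the paper's route. Both argue by contradiction, extract times $t_n\to t$, and get $f^n_{t_n}\rightharpoonup f_t$ from uniform weak convergence plus continuity of $f$. Both then obtain convergence of norms from iii) and weak lower semicontinuity, which upgrades to strong convergence in $L^2_\sigma$. Your remark on applying iii) along the extracted subsequence, by padding the times with $t$, fills in a detail the paper's proof passes over silently.
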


\begin{proof}
    Suppose by contradiction \eqref{eq:strong_convergence} does not hold. Then there must exist $\delta>0$ and a sequence $(t_n)_{n\in\N}\subset [0,\tau]$ such that $\| f^n_{t_n}-f_{t_n}\|_{L^2}\geq \delta>0$ for all $n\in\N$; up to refining it, we may additionally assume that $t_n\to t$ for some $t\in [0,\tau]$. Since $f\in C([0,\tau];L^2_\sigma)$, $\| f^{t_n}-f_t\|_{L^2}\to 0$ as $n\to\infty$; on the other hand, since $f^n\to f$ in $C_w([0,\tau];L^2_\sigma)$, $f^n_{t_n}\rightharpoonup f_t$ weakly in $L^2_\sigma$. By properties of weak convergence and assumption iii), this implies that
    \begin{align*}
        \| f_t\|_{L^2} \leq \liminf_{n\to\infty} \| f^n_{t_n}\|_{L^2}
        \leq \limsup_{n\to\infty} \| f^n_{t_n}\|_{L^2} \leq \| f_t\|_{L^2}
    \end{align*}
    and so that $\| f^n_{t_n}\|_{L^2}\to \| f\|_{L^2}$; together with $f^n_{t_n}\rightharpoonup f_t$, this implies that $\| f^n_{t_n}-f_t\|_{L^2}\to 0$. Overall, we conclude that
    \begin{align*}
        0 < \delta \leq \| f^n_{t_n}-f_{t_n}\|_{L^2} \leq \| f^n_{t_n}-f_{t}\|_{L^2} + \| f_{t_n}-f_{t}\|_{L^2}\to 0
    \end{align*}
    which yields the desired contradiction.
\end{proof}

\section*{Acknowledgements and funding information}
The author is supported by the Istituto Nazionale di Alta Matematica (INdAM) through the project GNAMPA 2026 “Fluidodinamica stocastica: irregolarità, trasporto e fenomeni di regolarizzazione”.
I am grateful to Stefano Spirito and Luigi De Rosa for helpful discussions on the existing literature during the final stages of the preparation of the manuscript.

\bibliographystyle{alpha}
\bibliography{bibliography}

\newcommand{\etalchar}[1]{$^{#1}$}
\begin{thebibliography}{LFMNL06}

\bibitem[AB97]{AliBou1997}
J.~J. Alibert and G.~Bouchitt\'e.
\newblock Non-uniform integrability and generalized {Y}oung measures.
\newblock {\em J. Convex Anal.}, 4(1):129--147, 1997.

\bibitem[AB01]{AliBah2001}
J.~J. Alibert and K.~Bahlali.
\newblock Genericity in deterministic and stochastic differential equations.
\newblock In {\em S\'eminaire de {P}robabilit\'es, {XXXV}}, volume 1755 of {\em
  Lecture Notes in Math.}, pages 220--240. Springer, Berlin, 2001.

\bibitem[ABC22]{AlBrCo2022}
Dallas Albritton, Elia Bru\'e, and Maria Colombo.
\newblock Non-uniqueness of {L}eray solutions of the forced {N}avier-{S}tokes
  equations.
\newblock {\em Ann. of Math. (2)}, 196(1):415--455, 2022.

\bibitem[ABC{\etalchar{+}}24]{ABCDLGJK2024}
Dallas Albritton, Elia Bru\'e, Maria Colombo, Camillo De~Lellis, Vikram Giri,
  Maximilian Janisch, and Hyunju Kwon.
\newblock {\em Instability and non-uniqueness for the 2{D} {E}uler equations,
  after {M}. {V}ishik}, volume 219 of {\em Annals of Mathematics Studies}.
\newblock Princeton University Press, Princeton, NJ, 2024.

\bibitem[AC90]{AlbCru1990}
Sergio Albeverio and Ana~Bela Cruzeiro.
\newblock Global flows with invariant ({G}ibbs) measures for {E}uler and
  {N}avier-{S}tokes two-dimensional fluids.
\newblock {\em Comm. Math. Phys.}, 129(3):431--444, 1990.

\bibitem[AC23]{AlCo2023}
Dallas Albritton and Maria Colombo.
\newblock Non-uniqueness of {L}eray solutions to the hypodissipative
  {N}avier-{S}tokes equations in two dimensions.
\newblock {\em Comm. Math. Phys.}, 402(1):429--446, 2023.

\bibitem[Amb04]{Ambrosio2004}
Luigi Ambrosio.
\newblock Transport equation and {C}auchy problem for {$BV$} vector fields.
\newblock {\em Invent. Math.}, 158(2):227--260, 2004.

\bibitem[AS26]{alazard2026generic}
Thomas Alazard and Ayman~Rimah Said.
\newblock Generic small-scale creation in the two-dimensional {E}uler equation.
\newblock {\em arXiv:2603.13079}, 2026.

\bibitem[AV25]{ArmVic2025}
Scott Armstrong and Vlad Vicol.
\newblock Anomalous diffusion by fractal homogenization.
\newblock {\em Ann. PDE}, 11(1):Paper No. 2, 145, 2025.

\bibitem[BadV84]{daVeiga1984}
H.~Beir\~ao~da Veiga.
\newblock On the solutions in the large of the two-dimensional flow of a
  nonviscous incompressible fluid.
\newblock {\em J. Differential Equations}, 54(3):373--389, 1984.

\bibitem[BC23]{BruCol2023}
Elia Bru\'e and Maria Colombo.
\newblock Nonuniqueness of solutions to the {E}uler equations with vorticity in
  a {L}orentz space.
\newblock {\em Comm. Math. Phys.}, 403(2):1171--1192, 2023.

\bibitem[BCC22]{BCC2022}
Paolo Bonicatto, Gennaro Ciampa, and Gianluca Crippa.
\newblock On the advection-diffusion equation with rough coefficients: weak
  solutions and vanishing viscosity.
\newblock {\em J. Math. Pures Appl. (9)}, 167:204--224, 2022.

\bibitem[BCC24]{BCC2024}
Paolo Bonicatto, Gennaro Ciampa, and Gianluca Crippa.
\newblock Weak and parabolic solutions of advection-diffusion equations with
  rough velocity field.
\newblock {\em J. Evol. Equ.}, 24(1):Paper No. 1, 16, 2024.

\bibitem[BCK24]{BCK2024}
Elia Brue, Maria Colombo, and Anuj Kumar.
\newblock Flexibility of two-dimensional {E}uler flows with integrable
  vorticity.
\newblock {\em arXiv:2408.07934}, 2024.

\bibitem[BDLS11]{BDLS2011}
Yann Brenier, Camillo De~Lellis, and L\'aszl\'o Sz\'ekelyhidi, Jr.
\newblock Weak-strong uniqueness for measure-valued solutions.
\newblock {\em Comm. Math. Phys.}, 305(2):351--361, 2011.

\bibitem[Ber10]{Bernard2010}
Patrick Bernard.
\newblock Some remarks on the continuity equation.
\newblock In {\em S\'eminaire: \'Equations aux {D}\'eriv\'ees {P}artielles.
  2008--2009}, S\'emin. \'Equ. D\'eriv. Partielles, pages Exp. No. VI, 12.
  \'Ecole Polytech., Palaiseau, 2010.

\bibitem[BM20]{BreMur2020}
Alberto Bressan and Ryan Murray.
\newblock On self-similar solutions to the incompressible {E}uler equations.
\newblock {\em J. Differential Equations}, 269(6):5142--5203, 2020.

\bibitem[BM24]{BucMod2024}
Miriam Buck and Stefano Modena.
\newblock Non-uniqueness and energy dissipation for 2{D} {E}uler equations with
  vorticity in {H}ardy spaces.
\newblock {\em J. Math. Fluid Mech.}, 26(2):Paper No. 26, 39, 2024.

\bibitem[BM26]{BucMod2026}
Miriam Buck and Stefano Modena.
\newblock Compactly supported anomalous weak solutions for 2{D} {E}uler
  equations with vorticity in {H}ardy spaces.
\newblock {\em J. Evol. Equ.}, 26(1):Paper No. 28, 2026.

\bibitem[Bre11]{Brezis2011}
Haim Brezis.
\newblock {\em Functional analysis, {S}obolev spaces and partial differential
  equations}.
\newblock Universitext. Springer, New York, 2011.

\bibitem[BS21]{BreShe2021}
Alberto Bressan and Wen Shen.
\newblock A posteriori error estimates for self-similar solutions to the
  {E}uler equations.
\newblock {\em Discrete Contin. Dyn. Syst.}, 41(1):113--130, 2021.

\bibitem[BS24]{BerSpi2024}
Luigi~C. Berselli and Stefano Spirito.
\newblock Fourier-{G}alerkin approximation of the solutions of the 2{D} {E}uler
  equations with bounded vorticity.
\newblock {\em J. Hyperbolic Differ. Equ.}, 21(3):503--522, 2024.

\bibitem[BSJW23]{burczak2023anomalous}
Jan Burczak, L{\'a}szl{\'o} Sz{\'e}kelyhidi~Jr, and Bian Wu.
\newblock Anomalous dissipation and {E}uler flows.
\newblock {\em arXiv:2310.02934}, 2023.

\bibitem[BV19]{BucVic2019}
Tristan Buckmaster and Vlad Vicol.
\newblock Nonuniqueness of weak solutions to the {N}avier-{S}tokes equation.
\newblock {\em Ann. of Math. (2)}, 189(1):101--144, 2019.

\bibitem[CCS20]{CCS2020}
Gennaro Ciampa, Gianluca Crippa, and Stefano Spirito.
\newblock Weak solutions obtained by the vortex method for the 2{D} {E}uler
  equations are {L}agrangian and conserve the energy.
\newblock {\em J. Nonlinear Sci.}, 30(6):2787--2820, 2020.

\bibitem[CCS21]{CCS2021}
Gennaro Ciampa, Gianluca Crippa, and Stefano Spirito.
\newblock Strong convergence of the vorticity for the 2{D} {E}uler equations in
  the inviscid limit.
\newblock {\em Arch. Ration. Mech. Anal.}, 240(1):295--326, 2021.

\bibitem[CCS23]{CoCrSo23}
Maria Colombo, Gianluca Crippa, and Massimo Sorella.
\newblock Anomalous dissipation and lack of selection in the
  {O}bukhov–{C}orrsin theory of scalar turbulence.
\newblock {\em Ann. PDE}, 9(21), 2023.

\bibitem[CDL08]{CriDeL2008}
Gianluca Crippa and Camillo De~Lellis.
\newblock Estimates and regularity results for the {D}i{P}erna-{L}ions flow.
\newblock {\em J. Reine Angew. Math.}, 616:15--46, 2008.

\bibitem[CFLS16]{CFLNS2016}
A.~Cheskidov, M.~C.~Lopes Filho, H.~J.~Nussenzveig Lopes, and R.~Shvydkoy.
\newblock Energy conservation in two-dimensional incompressible ideal fluids.
\newblock {\em Comm. Math. Phys.}, 348(1):129--143, 2016.

\bibitem[CFMS25a]{CFMS2025}
\'Angel Castro, Daniel Faraco, Francisco Mengual, and Marcos Solera.
\newblock A proof of {V}ishik's nonuniqueness theorem for the forced 2{D}
  {E}uler equation.
\newblock {\em J. Reine Angew. Math.}, 824:253--288, 2025.

\bibitem[CFMS25b]{castro2025unstable}
{\'A}ngel Castro, Daniel Faraco, Francisco Mengual, and Marcos Solera.
\newblock Unstable vortices, {sharp non-uniqueness with forcing, and global
  smooth solutions for the SQG equation}.
\newblock {\em arXiv:2502.10274}, 2025.

\bibitem[CM25]{cianfrocca2025some}
Francesco Cianfrocca and Stefano Modena.
\newblock On some typicality and density results for nonsmooth vector fields
  and the associated {ODE} and continuity equation.
\newblock {\em arXiv:2507.22754}, 2025.

\bibitem[CNSS17]{CNSS2017}
Gianluca Crippa, Camilla Nobili, Christian Seis, and Stefano Spirito.
\newblock Eulerian and {L}agrangian solutions to the continuity and {E}uler
  equations with {$L^1$} vorticity.
\newblock {\em SIAM J. Math. Anal.}, 49(5):3973--3998, 2017.

\bibitem[CS15]{CriSpi2015}
Gianluca Crippa and Stefano Spirito.
\newblock Renormalized solutions of the 2{D} {E}uler equations.
\newblock {\em Comm. Math. Phys.}, 339(1):191--198, 2015.

\bibitem[CS24]{CriSte2024}
Gianluca Crippa and Giorgio Stefani.
\newblock An elementary proof of existence and uniqueness for the {E}uler flow
  in localized {Y}udovich spaces.
\newblock {\em Calc. Var. Partial Differential Equations}, 63(7):Paper No. 168,
  31, 2024.

\bibitem[DD22]{DolDri2022}
M.~Dolce and T.~D. Drivas.
\newblock On maximally mixed equilibria of two-dimensional perfect fluids.
\newblock {\em Arch. Ration. Mech. Anal.}, 246(2-3):735--770, 2022.

\bibitem[DE23]{DriElg2023}
Theodore~D. Drivas and Tarek~M. Elgindi.
\newblock Singularity formation in the incompressible {E}uler equation in
  finite and infinite time.
\newblock {\em EMS Surv. Math. Sci.}, 10(1):1--100, 2023.

\bibitem[DEIJ22]{DrElIyJe22}
Theodore~D. Drivas, Tarek~M. Elgindi, Gautam Iyer, and In-Jee Jeong.
\newblock Anomalous dissipation in passive scalar transport.
\newblock {\em Arch. Rational Mech. Anal.}, 243:1151 -- 1180, 2022.

\bibitem[Del91]{Delort1991}
Jean-Marc Delort.
\newblock Existence de nappes de tourbillon en dimension deux.
\newblock {\em J. Amer. Math. Soc.}, 4(3):553--586, 1991.

\bibitem[DL89]{diperna1989ordinary}
Ronald~J. DiPerna and Pierre-Louis Lions.
\newblock Ordinary differential equations, transport theory and sobolev spaces.
\newblock {\em Inventiones mathematicae}, 98(3):511--547, 1989.

\bibitem[DLS09]{DeLSze2009}
Camillo De~Lellis and L\'aszl\'o Sz\'ekelyhidi, Jr.
\newblock The {E}uler equations as a differential inclusion.
\newblock {\em Ann. of Math. (2)}, 170(3):1417--1436, 2009.

\bibitem[DM87]{DiPMaj1987}
Ronald~J. DiPerna and Andrew~J. Majda.
\newblock Concentrations in regularizations for {$2$}-{D} incompressible flow.
\newblock {\em Comm. Pure Appl. Math.}, 40(3):301--345, 1987.

\bibitem[DPR17]{DePRin2017}
Guido De~Philippis and Filip Rindler.
\newblock Characterization of generalized {Y}oung measures generated by
  symmetric gradients.
\newblock {\em Arch. Ration. Mech. Anal.}, 224(3):1087--1125, 2017.

\bibitem[DPZ14]{DaPZab2014}
Giuseppe Da~Prato and Jerzy Zabczyk.
\newblock {\em Stochastic equations in infinite dimensions}, volume 152 of {\em
  Encyclopedia of Mathematics and its Applications}.
\newblock Cambridge University Press, Cambridge, second edition, 2014.

\bibitem[DRP25]{DeRPar2025}
Luigi De~Rosa and Jaemin Park.
\newblock No anomalous dissipation in two-dimensional incompressible fluids.
\newblock {\em SIAM J. Math. Anal.}, 57(5):5771--5790, 2025.

\bibitem[EL24]{ElLi24}
Tarek~M. Elgindi and Kyle Liss.
\newblock Norm growth, non-uniqueness, and anomalous dissipation in passive
  scalars.
\newblock {\em Arch. Ration. Mech. Anal.}, 248(6):Paper No. 120, 28, 2024.

\bibitem[Fla18]{Flandoli2018}
Franco Flandoli.
\newblock Weak vorticity formulation of 2{D} {E}uler equations with white noise
  initial condition.
\newblock {\em Comm. Partial Differential Equations}, 43(7):1102--1149, 2018.

\bibitem[FNO18]{FeNeOl2018}
Ennio Fedrizzi, Wladimir Neves, and Christian Olivera.
\newblock On a class of stochastic transport equations for {$L^2_{\rm loc}$}
  vector fields.
\newblock {\em Ann. Sc. Norm. Super. Pisa Cl. Sci. (5)}, 18(2):397--419, 2018.

\bibitem[GKL23]{GuKoLi2023}
Andr\'e{} Guerra, Lukas Koch, and Sauli Lindberg.
\newblock Nonlinear open mapping principles, with applications to the
  {J}acobian equation and other scale-invariant {PDE}s.
\newblock {\em Adv. Math.}, 415:Paper No. 108869, 39, 2023.

\bibitem[GKN23]{GKN2023}
Vikram Giri, Hyunju Kwon, and Matthew Novack.
\newblock The $l^3$-based strong {O}nsager theorem.
\newblock {\em arXiv:2305.18509, to appear in Ann. of Math.}, 2023.

\bibitem[GL24]{GalLuo2024}
Lucio Galeati and Dejun Luo.
\newblock L{DP} and {CLT} for {SPDE}s with transport noise.
\newblock {\em Stoch. Partial Differ. Equ. Anal. Comput.}, 12(1):736--793,
  2024.

\bibitem[GLN26]{GaLeNi2026}
Lucio Galeati, James-Michael Leahy, and Torstein Nilssen.
\newblock On the well-posedness of (nonlinear) rough continuity equations.
\newblock {\em J. Differential Equations}, 462:Paper No. 114124, 95, 2026.

\bibitem[GR24]{GirRad2024}
Vikram Giri and R\u azvan-Octavian Radu.
\newblock The {O}nsager conjecture in 2{D}: a {N}ewton-{N}ash iteration.
\newblock {\em Invent. Math.}, 238(2):691--768, 2024.

\bibitem[HCR25]{HeRo25+}
Elias Hess-Childs and Keefer Rowan.
\newblock Turbulent and intermittent phenomena in a universal total anomalous
  dissipator.
\newblock {\em arXiv:2508.00115}, 2025.

\bibitem[HSY92]{HuSaYo1992}
Brian~R. Hunt, Tim Sauer, and James~A. Yorke.
\newblock Prevalence: a translation-invariant ``almost every'' on
  infinite-dimensional spaces.
\newblock {\em Bull. Amer. Math. Soc. (N.S.)}, 27(2):217--238, 1992.

\bibitem[ILFL25]{iyer2025incompressible}
Gautam Iyer, Milton~C. Lopes~Filho, and Helena J.~Nussenzveig Lopes.
\newblock Incompressible {2D Euler equations with non-decaying random initial
  vorticity}.
\newblock {\em arXiv:2512.07096}, 2025.

\bibitem[Ise18]{Isett2018}
Philip Isett.
\newblock A proof of {O}nsager's conjecture.
\newblock {\em Ann. of Math. (2)}, 188(3):871--963, 2018.

\bibitem[Ise24]{Isett2024}
Philip Isett.
\newblock On the endpoint regularity in {O}nsager's conjecture.
\newblock {\em Anal. PDE}, 17(6):2123--2159, 2024.

\bibitem[JS24]{JoSo24+}
Carl~P. Johansson and Massimo Sorella.
\newblock Anomalous dissipation via spontaneous stochasticity with a
  two-dimensional autonomous velocity field.
\newblock {\em arXiv:2409.03599. To appear in Duke Mathematical Journal}, 2024.

\bibitem[Koc02]{Koch2002}
Herbert Koch.
\newblock Transport and instability for perfect fluids.
\newblock {\em Math. Ann.}, 323(3):491--523, 2002.

\bibitem[KOT02]{KOT2002}
Hideo Kozono, Takayoshi Ogawa, and Yasushi Taniuchi.
\newblock The critical {S}obolev inequalities in {B}esov spaces and regularity
  criterion to some semi-linear evolution equations.
\newblock {\em Math. Z.}, 242(2):251--278, 2002.

\bibitem[KR10]{KriRin2010}
Jan Kristensen and Filip Rindler.
\newblock Characterization of generalized gradient {Y}oung measures generated
  by sequences in {$W^{1,1}$} and {BV}.
\newblock {\em Arch. Ration. Mech. Anal.}, 197(2):539--598, 2010.

\bibitem[KR20]{KriRai2020}
Jan Kristensen and Bogdan Raita.
\newblock {\em An introduction to generalized Young measures}, volume~45.
\newblock Lecture notes of MPI Leipzig, 2020.

\bibitem[KvS14]{KisSve2014}
Alexander Kiselev and Vladimir \v~Sver\'ak.
\newblock Small scale creation for solutions of the incompressible
  two-dimensional {E}uler equation.
\newblock {\em Ann. of Math. (2)}, 180(3):1205--1220, 2014.

\bibitem[LBL19]{LeBLio2019}
Claude Le~Bris and Pierre-Louis Lions.
\newblock {\em Parabolic equations with irregular data and related
  issues---applications to stochastic differential equations}, volume~4 of {\em
  De Gruyter Series in Applied and Numerical Mathematics}.
\newblock De Gruyter, Berlin, 2019.

\bibitem[LFMNL06]{LFMNL2006}
Milton~C. Lopes~Filho, Anna~L. Mazzucato, and Helena~J. Nussenzveig~Lopes.
\newblock Weak solutions, renormalized solutions and enstrophy defects in 2{D}
  turbulence.
\newblock {\em Arch. Ration. Mech. Anal.}, 179(3):353--387, 2006.

\bibitem[Lin24]{lindberg2024integrability}
Sauli Lindberg.
\newblock On the {integrability properties of Leray--Hopf solutions of the
  Navier--Stokes equations on $\mathbb{R}^3$}.
\newblock {\em arXiv:2412.13066}, 2024.

\bibitem[Lio96]{Lions1996}
Pierre-Louis Lions.
\newblock {\em Mathematical topics in fluid mechanics. {V}ol. 1}, volume~3 of
  {\em Oxford Lecture Series in Mathematics and its Applications}.
\newblock The Clarendon Press, Oxford University Press, New York, 1996.
\newblock Incompressible models, Oxford Science Publications.

\bibitem[Lio98]{Lions1998}
Pierre-Louis Lions.
\newblock Sur les \'equations diff\'erentielles ordinaires et les \'equations
  de transport.
\newblock {\em C. R. Acad. Sci. Paris S\'er. I Math.}, 326(7):833--838, 1998.

\bibitem[LX00]{LiuXin2000}
Jian-Guo Liu and Zhouping Xin.
\newblock Convergence of a {G}alerkin method for 2-{D} discontinuous {E}uler
  flows.
\newblock {\em Comm. Pure Appl. Math.}, 53(6):786--798, 2000.

\bibitem[MB02]{MajBer2002}
Andrew~J. Majda and Andrea~L. Bertozzi.
\newblock {\em Vorticity and incompressible flow}, volume~27 of {\em Cambridge
  Texts in Applied Mathematics}.
\newblock Cambridge University Press, Cambridge, 2002.

\bibitem[MS26]{mengual2026sharp}
Francisco Mengual and Marcos Solera.
\newblock Sharp {nonuniqueness for the forced 2D Navier-Stokes and dissipative
  SQG equations}.
\newblock {\em arXiv:2601.00331}, 2026.

\bibitem[NPS13]{NaPaSt2013}
Andrea~R. Nahmod, Nata\v~sa Pavlovi\'c, and Gigliola Staffilani.
\newblock Almost sure existence of global weak solutions for supercritical
  {N}avier-{S}tokes equations.
\newblock {\em SIAM J. Math. Anal.}, 45(6):3431--3452, 2013.

\bibitem[NS19]{NahSta2019}
Andrea~R. Nahmod and Gigliola Staffilani.
\newblock Randomness and nonlinear evolution equations.
\newblock {\em Acta Math. Sin. (Engl. Ser.)}, 35(6):903--932, 2019.

\bibitem[Orl32]{orlicz1932theorie}
W~Orlicz.
\newblock Zur theorie der differentialgleichung $y'= f (x, y)$.
\newblock {\em Bull. Acad. Polon. Sci. Ser. A}, 8:221--228, 1932.

\bibitem[Sai24]{said2024small}
Ayman~Rimah Said.
\newblock Small {scale creation of the Lagrangian flow in 2d perfect fluids}.
\newblock {\em arXiv:2401.06476}, 2024.

\bibitem[Sch95]{Schochet1995}
Steven Schochet.
\newblock The weak vorticity formulation of the {$2$}-{D} {E}uler equations and
  concentration-cancellation.
\newblock {\em Comm. Partial Differential Equations}, 20(5-6):1077--1104, 1995.

\bibitem[Shn13]{Shnirelman2013}
A.~I. Shnirelman.
\newblock On the {L}ong {T}ime {B}ehavior of {F}luid {F}lows.
\newblock {\em Procedia IUTAM}, 7:151--160, 2013.
\newblock IUTAM Symposium on Topological Fluid Dynamics: Theory and
  Applications.

\bibitem[SJ15]{szekelyhidi2015weak}
L{\'a}szl{\'o} Sz{\'e}kelyhidi~Jr.
\newblock W{eak solutions of the Euler equations: non-uniqueness and
  dissipation}.
\newblock {\em Journ{\'e}es {\'E}quations aux deriv{\'e}es partielles}, pages
  1--34, Talk no. 10, 2015.

\bibitem[{\v S}ve12]{Sverak}
V.~{\v S}ver{\'a}k.
\newblock {\em Selected topics in fluid mechanics}.
\newblock https://www-users.cse.umn.edu/~sverak/course-notes2011.pdf.
  2011/2012.

\bibitem[SW12]{SzeWie2012}
L\'aszl\'o{} Sz\'ekelyhidi and Emil Wiedemann.
\newblock Young measures generated by ideal incompressible fluid flows.
\newblock {\em Arch. Ration. Mech. Anal.}, 206(1):333--366, 2012.

\bibitem[Vis18a]{vishik2018instability}
Misha Vishik.
\newblock I{nstability and non-uniqueness in the Cauchy problem for the Euler
  equations of an ideal incompressible fluid. Part I}.
\newblock {\em arXiv:1805.09426}, 2018.

\bibitem[Vis18b]{vishik2018instabilityII}
Misha Vishik.
\newblock I{nstability and non-uniqueness in the Cauchy problem for the Euler
  equations of an ideal incompressible fluid. Part II}.
\newblock {\em arXiv:1805.09440}, 2018.

\bibitem[Wie11]{Wiedemann2011}
Emil Wiedemann.
\newblock Existence of weak solutions for the incompressible {E}uler equations.
\newblock {\em Ann. Inst. H. Poincar\'e{} C Anal. Non Lin\'eaire},
  28(5):727--730, 2011.

\end{thebibliography}
\end{document}